\documentclass[final,hidelinks,onefignum,onetabnum]{template}
\usepackage{lipsum}
\usepackage{amsfonts}
\usepackage{amssymb}
\usepackage{bookmark}
\usepackage{bm}
\usepackage{graphicx}
\usepackage{epstopdf}
\usepackage{hyperref}
\usepackage{enumitem}
\usepackage{algorithmic}
\usepackage{booktabs}
\usepackage{multirow}
\usepackage{cleveref}
\usepackage{aliascnt}
\usepackage[caption=false]{subfig}
\usepackage{orcidlink}

\ifpdf
  \DeclareGraphicsExtensions{.eps,.pdf,.png,.jpg}
\else
  \DeclareGraphicsExtensions{.eps}
\fi

\newsiamremark{remark}{Remark}
\newsiamremark{hypothesis}{Hypothesis}
\crefname{hypothesis}{Hypothesis}{Hypotheses}
\newsiamthm{claim}{Claim}
\newsiamremark{fact}{Fact}
\crefname{fact}{Fact}{Facts}

\newaliascnt{example}{theorem}
\newtheorem{example}[example]{\textit{Example}}
\aliascntresetthe{example}
\crefname{example}{Example}{Examples}
\Crefname{example}{Example}{Examples}
\crefname{remark}{Remark}{Remarks}
\Crefname{remark}{Remark}{Remarks}

\crefname{enumi}{part}{parts}
\Crefname{enumi}{Part}{Parts}

\newcommand{\sd}{\mathord{\prime\mkern-2.5mu\reflectbox{$\scriptstyle\prime$}}}
\newcommand\innerprd{\mathbin{\vcenter{\hbox{\scalebox{0.5}{$\bullet$}}}}}

\Crefname{ALC@unique}{Line}{Lines}

\newcommand{\BREAK}{\STATE \textbf{break}}

\headers{Specular ellipse method}{K. Jung}

\title{The specular ellipse method for scalar ordinary differential equations: exactness and accuracy up to fourth order\thanks{Date: \today.
\funding{This work is supported in part by Michigan State University and the National Science Foundation Research Traineeship Program (DGE-2152014).The author received partial support from Jun Kitagawa's National Science Foundation grant (DMS-2246606).}}
}

\author{Kiyuob Jung\,\orcidlink{0009-0007-5075-4061}\thanks{Department of Mathematics, Michigan State University, East Lansing, MI 48824 USA (\email{kyjung@msu.edu}).}
}
\usepackage{amsopn}
\DeclareMathOperator{\sgn}{sgn}

\hypersetup{
  colorlinks=false, pdfborder={0 0 1}, linkbordercolor={1 0 0}, citebordercolor={0 1 0} 
}

\begin{document}

\maketitle

\begin{abstract}
    This paper introduces a family of one-step implicit methods for solving scalar ordinary differential equations.
    At each step, the update uses a scaled angular mean of two vector-field evaluations, and the positive scale may vary from step to step.
    The leading terms of the local truncation error can be expressed in terms of the derivative of the signed curvature of the scaled solution graph.
    We prove that the proposed method reproduces the exact solution at the mesh points when the solution graph has constant signed curvature under a fixed positive scaling and each implicit update is unique.
    When this special geometric condition is not satisfied, we establish second-order consistency and convergence for positive scale sequences satisfying suitable uniform conditions.
    Furthermore, third- and fourth-order consistency and convergence can be achieved by choosing the scale to cancel the relevant curvature terms in the local truncation error.
    Using only the given problem data, we classify when these improvements are possible and determine the corresponding scale choices.
    An example shows that the proposed fourth-order method can yield smaller errors than the classical fourth-order Runge--Kutta method at the same step size.
\end{abstract}

\begin{keywords}
  initial value problem, nonlinear one-step method, higher-order convergence, signed curvature
\end{keywords}

\begin{MSCcodes}
  65L20, 65L05, 65L12, 65L70
\end{MSCcodes}

\section{Introduction}

Let $\Omega\subset\mathbb{R}^2$ be open, and let $F:\Omega\to\mathbb{R}$ be given.
Fix $t_0 \geq 0$, $T > t_0$, and $u_0\in\mathbb{R}$ such that $(t_0,u_0)\in\Omega$.
Consider the initial value problem
\begin{equation}
    \label{ODE:IVP}
    \begin{cases}
        u'(t)=F\bigl(t,u(t)\bigr),
        & t\in(t_0,T),\\
        u(t_0)=u_0.
    \end{cases}
\end{equation}
The unknown is a function $u:[t_0,T]\to\mathbb{R}$ satisfying $\bigl(t,u(t)\bigr)\in\Omega$ for every $t\in[t_0,T]$.

The numerical method studied in this paper can be written in the form
\begin{displaymath}
    u_{n+1}
    =
    u_n + h \, \mathcal{M}_n \left( F(t_{n+1},u_{n+1}), F(t_n,u_n) \right),
\end{displaymath}
where $h>0$ is the step size, $t_n=t_0+nh$, and $\mathcal{M}_n:\mathbb{R}^2\to\mathbb{R}$ combines the two vector-field evaluations.
The forward Euler, backward Euler, and Crank--Nicolson (or trapezoidal) methods arise from particular linear choices of $\mathcal M_n$.
For example, in the Crank--Nicolson method (CN), $\mathcal{M}_n$ is the arithmetic mean of the two vector-field values.
For standard definitions and notation concerning numerical methods for ordinary differential equations, we follow \cite{2016_Butcher_BOOK}.

In this paper, we introduce the \emph{specular ellipse} method (SE), a nonlinear one-step implicit method defined through the specular derivative introduced in \cite{2026a_Jung}.
The conclusion of this paper is that, for a given problem \eqref{ODE:IVP}, $F$ and its derivatives provide sufficient criteria for exact reproduction and for convergence of order two, three, or four under the corresponding assumptions.

The specular derivative is obtained by converting the forward and backward difference quotients into angles and then taking the slope corresponding to the arithmetic mean of these angles.
We incorporate this geometric construction into SE by taking $\mathcal{M}_n$ to be a $\sigma$-scaled angular mean of the consecutive vector-field values $F(t_n, u_n)$ and $F(t_{n+1}, u_{n+1})$, where $\sigma > 0$ is a scale parameter.
The scale may vary from step to step and is then denoted by $\sigma_n$.
Geometrically, $\sigma$ scales the vertical coordinate of the solution graph, while the angular mean chooses each numerical chord along the angular bisector of the corresponding tangent directions, yielding exact reproduction when the scaled graph has constant signed curvature and each implicit update is unique.

Nonlinear choices of $\mathcal{M}_n$ based on classical means have also been studied.
Evans and Sanugi introduced nonlinear one-step formulas based on the geometric mean \cite{1987_Evans} and later compared nonlinear trapezoidal formulas based on arithmetic, geometric, harmonic, logarithmic, and contra-harmonic means \cite{1991_Evans}.
A related nonlinear integration formula was proposed in \cite{1996_Sivakumar}.
Beyond these one-step formulas, nonlinear means were also incorporated into higher-order Runge--Kutta formulas \cite{1989a_Evans,1994_Sanugi,1995_Evans}.
In SE, the $\sigma$-scaled angular mean defining $\mathcal{M}_n$ differs from these classical means.

Later, Villatoro studied the exactness, local error, and stability of nonlinear one-step methods based on classical means.
Exactness conditions for these methods when applied to autonomous scalar ODEs were characterized in \cite{2008_Villatoro}.
A procedure for finding ODEs exactly solved by a given one-step method was developed in \cite{2009_Villatoro}.
Our exactness result instead gives a geometric sufficient condition in terms of constant signed curvature of the scaled solution graph.
Local truncation errors for Evans--Sanugi methods based on differentiable homogeneous means were analyzed in \cite{2010b_Villatoro}.
Stability of nonlinear one-step methods based on homogeneous means was studied in \cite{2010a_Villatoro}.
However, this stability analysis is not applicable to SE, since the $\sigma$-scaled angular mean is not homogeneous.
We therefore study the stability of SE for the Dahlquist test equation with a negative real coefficient.

Apart from the preceding constructions, another approach fixes the parametric form of a numerical method and then selects a problem-dependent parameter from the differential equation or its local truncation error.
A geometrically related use of parameter selection appears in phase-fitted variational integrators, where a local frequency is estimated from the curvature and speed of the trajectory at each step and used in frequency-dependent trigonometric interpolation \cite{2010_Kosmas,2019_Kosmas}.
In exponentially fitted two-step BDF methods, Ixaru et al. determined the fitting frequency from $F$ and its total derivatives to cancel the leading term of the local truncation error and improve the order from two to three \cite{2002_Ixaru}.
In exponentially fitted two-step Runge--Kutta methods, the fitting parameter was selected by annihilating or minimizing the leading local discretization error \cite{2012_DAmbrosio}.
A related cancellation mechanism appears in a nonlinear rational one-step family, where an appropriate parameter choice cancels the principal local truncation error and gives a third-order method \cite{2015_Ramos}.
Since the leading terms in the local truncation error involve the variation of the signed curvature of the $\sigma$-scaled solution graph, SE selects its scale from $F$ and its derivatives to cancel these terms and improve the convergence order from two to three or four.

\subsection{Definitions and notation}

We write $\left\| \, \cdot \, \right\|_{\mathbb{R}^n}$ for the Euclidean norm on $\mathbb{R}^n$.
If a constant $C$ depends on parameters $p_1, p_2, \ldots, p_m$, we indicate this dependence by writing $C \equiv C(p_1, p_2, \ldots, p_m)$.

We recall the definition of the specular derivative for a real-valued function of one real variable as given in \cite{2026a_Jung}.
Let $I$ be an open interval in $\mathbb{R}$, let $u:I\to\mathbb{R}$, and fix $t\in I$.
The specular derivative of $u$ at $t$ is defined by
\begin{displaymath}
    u^{\sd}(t)
    :=
    \lim_{h\searrow0}
    \mathcal{A}\bigl(u(t+h)-u(t),u(t)-u(t-h),h\bigr),
\end{displaymath}
provided that the limit exists as a finite real number, where
\begin{displaymath}
    \mathcal{A}(a,b,c)
    :=
    \frac{a\sqrt{b^2+c^2}+b\sqrt{a^2+c^2}}
    {c\sqrt{a^2+c^2}+c\sqrt{b^2+c^2}}, \qquad
    (a, b, c) \in \mathbb{R} \times \mathbb{R} \times (0, \infty).
\end{displaymath}
For $\alpha,\beta\in\mathbb{R}$, define
\begin{align*}
    \mathcal{B}(\alpha,\beta)
    &:= 
    \tan\left( \frac{1}{2}\arctan(\alpha) + \frac{1}{2}\arctan(\beta) \right),   \\
    \mathcal{C}(\alpha,\beta)
    &:=
    \left( \frac{\alpha}{\sqrt{1+\alpha^2}} + \frac{\beta}{\sqrt{1+\beta^2}} \right) \left( \frac{1}{\sqrt{1+\alpha^2}} + \frac{1}{\sqrt{1+\beta^2}} \right)^{-1}.
\end{align*}
Note that $\alpha$ and $\beta$ represent the right- and the left-hand derivatives, respectively.
The identity
\begin{displaymath}
    \mathcal{A}(a,b,c)
    =
    \mathcal{B}\left(\frac{a}{c},\frac{b}{c}\right)
    =
    \mathcal{C}\left(\frac{a}{c},\frac{b}{c}\right)
\end{displaymath}
holds for every $a,b\in\mathbb{R}$ and $c>0$; see \cite[Lem.~2.1]{2026a_Jung}.
Suppose that both one-sided derivatives exist as finite real numbers.
By \cite[Thm.~2.14]{2026a_Jung}, we have 
\begin{align}
    u^{\sd}(t)
    &= \mathcal{B}\left(\partial^+u(t),\partial^-u(t)\right),  \label{eq:repr_B} \\
    &= \mathcal{C}\left(\partial^+u(t),\partial^-u(t)\right),   \label{eq:repr_C}
\end{align}
where $\partial^+u(t)$ and $\partial^-u(t)$ are the right- and left-hand derivatives of $u$ at $t$.
We refer to \cite[App.~A]{2026a_Jung} for a detailed analysis of the functions $\mathcal{A}$, $\mathcal{B}$, and $\mathcal{C}$.

We use the following notation.
Let $t_0$, $T$ be fixed such that $0 \leq t_0 < T$.
For $h > 0$, we define the nonnegative integer $N_h := \lfloor h^{-1}(T-t_0)\rfloor$.
Given a numerical approximation $\{u_n\}_{n=0}^{N_h}$ to a solution
$u:[t_0,T]\to\mathbb{R}$, we write
\begin{displaymath}
    t_n:=t_0+nh,
    \qquad
    e_n:=u(t_n)-u_n,
    \qquad
    n=0,1,\ldots,N_h.
\end{displaymath}
For $k \in \mathbb{N}$ and $u\in C^k([t_0,T])$, we use the notation
\begin{displaymath}
    M_j
    :=
    \max_{t\in[t_0, T]}\left|u^{(j)}(t)\right|,
    \qquad
    j=1, 2, \ldots, k.
\end{displaymath}
For a solution $u:[t_0,T]\to\mathbb{R}$ of \eqref{ODE:IVP} and a positive real number $\rho > 0$, we denote the closed vertical $\rho$-tube about the graph
of $u$ by
\begin{displaymath}
    U_\rho(u)
    :=
    \left\{ (t,x) \in [t_0,T] \times \mathbb{R} \,\middle|\, |x-u(t)| \leq \rho \right\}.
\end{displaymath}

We use the following hypotheses.

\begin{enumerate}[label=\upshape(H\arabic*), ref=(H\arabic*)]
    \item
    \label{H1}
    \textup{(Unique solvability and Lipschitz continuity)}
    Let $u:[t_0,T]\to\mathbb{R}$ be the unique solution of \eqref{ODE:IVP}.
    Suppose that there exist constants $\rho>0$ and $L\geq0$
    such that $U_\rho(u)\subset\Omega$ and
    \begin{displaymath}
        |F(t,x)-F(t,y)|
        \leq
        L|x-y|
    \end{displaymath}
    whenever $(t,x),(t,y)\in U_\rho(u)$.

    \item 
    \label{H2}
    \textup{(Uniform nondegeneracy)}
    Let $u:[t_0,T]\to\mathbb{R}$ be a solution of \eqref{ODE:IVP}.
    Suppose that there exist constants $H>0$ and $\eta>0$ such that, for every $h\in(0,H]$, a sequence $\{\sigma_n\}_{n=0}^{N_h-1}$ of positive scales is given such that
    \begin{displaymath}
        \sigma_n^2+u'(t_n)^2 \geq \eta
    \end{displaymath}
    for every $n=0,1,\ldots,N_h-1$.
\end{enumerate}

\subsection{Main results}

For the specular ellipse method, we establish exactness with suitable fixed scales, convergence with prescribed scales, and convergence with numerically selected scales.

First, we show that the angle-bisecting geometry is retained by the discrete update.
If a fixed positive scaling makes the exact solution graph have constant signed curvature and each implicit update has a unique admissible solution, then the numerical values coincide with the exact solution at all mesh points.
This includes elliptic solution graphs after an appropriate scaling; see \cref{thm:SE_exact_constant_curvature}.

Second, under hypotheses~\ref{H1} and~\ref{H2} and the stated smoothness assumptions, SE generates a unique numerical sequence and converges with order two for all sufficiently small step sizes; see \cref{cor:consistency_of_SE,cor:convergence_orders_SE}.
The leading term in the local truncation error is determined by the derivative of the signed curvature of the scaled solution graph.
Prescribed scales cancelling the contribution at the current point or the combined contributions at the current and next points yield third- and fourth-order consistency, respectively, and the corresponding convergence orders under the required uniform bounds.

Third, we show that the scale choices used to achieve third- and fourth-order convergence are not merely theoretical: they can be implemented numerically without a closed-form expression for the exact solution.
Depending on the construction, the scales are determined from the step size or computed from $F$, its derivatives, and the numerical values.
Under the stated uniform nondegeneracy and regularity assumptions, the resulting numerical methods retain the corresponding convergence orders; see \cref{thm:third_order_SE,thm:G4_vanishing_scale_SE,thm:SE4}.

\subsection{Organization}

\Cref{sec:numerical_methods} introduces the specular Euler family and the specular ellipse method.
\Cref{sec:geometry_exactness_SE} develops its geometric interpretation and establishes exactness for scaled solution graphs of constant signed curvature.
\Cref{sec:convergence} proves second-order convergence, conditional third- and fourth-order convergence, and decay on the negative real Dahlquist equation.
\Cref{sec:scale_third_order,sec:scale_fourth_order} classify the field-based scale-selection problems and establish the corresponding third- and fourth-order convergence results.
\Cref{sec:numerical} illustrates the main results, while \cref{apx:C_sigma,apx:auxiliary_results} collect the estimates used in the analysis.

\section{Numerical methods}
\label{sec:numerical_methods}

This section introduces numerical methods for \eqref{ODE:IVP} based on discrete approximations of the specular derivative $u^{\sd}$.

\subsection{The specular Euler method}

From \eqref{eq:repr_C}, we suggest the following numerical procedure.

\begin{definition}
    Consider the problem \eqref{ODE:IVP}.
    The \emph{specular Euler} method generates a sequence $\left\{ u_n \right\}_{n=0}^{N_h}$ according to the formula
    \begin{equation} \label{mtd:specular_Euler}
    u_{n+1} = u_n + h \, \mathcal{C}( \alpha_n, \beta_n ),
    \end{equation}
    for $n=0,1,\ldots,N_h-1$, where $t_n := t_0+nh$ and $\alpha_n, \beta_n \in \mathbb{R}$ are chosen at each step.
The initial time $t_0 \geq 0$ and starting point $u_0 \in \mathbb{R}$ are given.
\end{definition}

In the constructions below, $\alpha_n$ and $\beta_n$ approximate the right- and left-hand derivatives, respectively.
There are two basic principles for choosing $\alpha_n$ and $\beta_n$.
First, one of $\alpha_n$ and $\beta_n$ should incorporate information from the given function $F$.
For example, the choice $\alpha_n = h^{-1}(u_{n+1} - u_n)$ and $\beta_n = h^{-1}(u_n - u_{n-1})$, which does not incorporate any information from $F(t_n, u_n)$, is not recommended.
Second, $\alpha_n$ and $\beta_n$ should not use information from $u_{n+2}$ or $F(t_{n+2}, u_{n+2})$, since such information is not directly tied to the change occurring at time $t_n$.
With these considerations in mind, \cref{tbl:choices_of_alpha_m_and_beta_m} summarizes eight possible choices for $\alpha_n$ and $\beta_n$ in the formula \eqref{mtd:specular_Euler}.
The one-step choices are applied from $n=0$.
The two-step choices are applied from $n=1$ after an external starting value $u_1$ has been supplied.
One may explore other combinations beyond those listed in the table.

\begin{table}[tbhp]
\footnotesize
\caption{Choices of $\alpha_n$ and $\beta_n$ in the formula \eqref{mtd:specular_Euler}.}
\label{tbl:choices_of_alpha_m_and_beta_m}
\centering
\renewcommand{\arraystretch}{1.7} 
\begin{tabular}{cccccc}
    \toprule
    $\alpha_n$ & $\beta_n$ & Formulation & $u_1$ assignment & Step & Name \\ 
    \midrule
    
    \multirow{3}{*}{$F(t_n, u_n)$} 
    & $F(t_{n-1}, u_{n-1})$                    & explicit & externally & two & Type~1 \\
    & $\displaystyle \frac{u_n - u_{n-1}}{h}$  & explicit & externally & two & Type~2 \\
    & $F(t_n, u_n)$                            & explicit & internally & one & Explicit Euler \\ 
    \midrule
    
    \multirow{2}{*}{$\displaystyle\frac{u_{n+1} - u_n}{h}$} 
    & $F(t_{n-1}, u_{n-1})$                 & explicit & externally & two & Type~3 \\
    & $F(t_n, u_n)$                         & explicit & internally & one & Type~4 \\
    \midrule
    
    \multirow{3}{*}{$F(t_{n+1}, u_{n+1})$} 
    & $F(t_n, u_n)$                           & implicit & internally & one & Type~5 \\
    & $\displaystyle\frac{u_{n+1} - u_n}{h}$  & implicit & internally & one & Type~6 \\
    & $F(t_{n+1}, u_{n+1})$                   & implicit & internally & one & Implicit Euler \\ 
    \bottomrule
\end{tabular}
\end{table}

The choices $\alpha_n=\beta_n=F(t_n,u_n)$ and $\alpha_n=\beta_n=F(t_{n+1},u_{n+1})$ yield the explicit and implicit Euler methods, respectively.
This follows directly from the identity $\mathcal{C}(\alpha, \alpha) = \alpha$ in \cite[Lem.~A.2~(a)]{2026a_Jung}.
Thus, the term ``Euler'' is deliberate: the proposed family contains both classical Euler methods as special cases.
We refer to all eight choices as the specular Euler methods.
We denote the methods of Types~1 through~6 by SET1 through SET6, respectively.

SET1 and SET2 are likewise two-step methods.
SET3 is also a two-step method and reduces to
\begin{displaymath}
    u_{n+1}
    =
    u_n+h\,F(t_{n-1},u_{n-1}),
\end{displaymath}
which uses the slope from the preceding step.
Numerical experiments not reported here indicate first-order convergence for all three methods, and we therefore do not consider them further.

The defining relation for SET4 is
\begin{displaymath}
    \frac{u_{n+1}-u_n}{h}
    =
    \mathcal{C} \left( \frac{u_{n+1}-u_n}{h}, F(t_n,u_n) \right).
\end{displaymath}
By \cite[Lem.~A.2~(e)]{2026a_Jung}, $\mathcal{C}(\alpha,\beta)=\alpha$ or $\mathcal{C}(\alpha,\beta)=\beta$ holds if and only if $\alpha=\beta$.
This implies 
\begin{displaymath}
    \frac{u_{n+1}-u_n}{h} = F(t_n,u_n),
\end{displaymath}
and hence SET4 coincides with the explicit Euler method.
Similarly, SET6 coincides with the implicit Euler method.

Thus, among Types~1 through~6, only SET5 remains as a genuinely nonclassical one-step method of interest.
We therefore focus on SET5 and its scaled extension throughout the remainder of the paper.

\subsection{The specular Euler method of Type 5}

In this subsection, we restrict our attention to SET5.

\begin{definition}      \label{def:SE5}
    Consider the initial value problem \eqref{ODE:IVP}.
    The \emph{specular Euler method of Type 5} (SET5) generates a sequence $\left\{ u_n \right\}_{n=0}^{N_h}$ according to the formula
    \begin{equation}    \label{mth:SE5}
        u_{n+1} = u_n + h \, \mathcal{C}(F(t_{n+1}, u_{n+1}), F(t_n, u_n)),
    \end{equation}
    for $n=0,1,\ldots,N_h-1$, where $t_n := t_0 + nh$ for $n=0,1,\ldots,N_h$, $h>0$ is the step size, and the initial time $t_0 \geq 0$ and starting point $u_0 \in \mathbb{R}$ are given.
\end{definition}

The following circular example motivates the main construction: SET5 reproduces the exact solution values at every mesh point.
Consider the problem \eqref{ODE:IVP} with $t_0 = 0$, $u_0 = 1$, and 
\begin{equation}    \label{ode:circle_eq}
    F(t, u) = -\frac{tu}{1-t^2}, \qquad (t,u)\in (-1,1)\times\mathbb{R}.
\end{equation}
For $t \in [0, 1)$, the exact solution is $u(t)=\sqrt{1-t^2}$, and the solution graph is $\gamma(t) = \left( t,\sqrt{1 - t^2} \right)$.

SET5 traces $\gamma$ exactly.
Indeed, let $0 = t_0 < t_1 < \cdots < t_N < 1$ be a uniform mesh.
For each $n=0,1,\ldots,N$, choose $\theta_n\in[0,\frac{\pi}{2})$ such that $t_n=\sin\theta_n$ and $u(t_n)=\cos\theta_n$.
Then, for every $n=0, 1, \ldots, N-1$,
\begin{displaymath}
    \frac{u(t_{n+1})-u(t_n)}{t_{n+1}-t_n}
    =
    -\tan\left(\frac{\theta_{n+1}+\theta_n}{2}\right)
    =
    \mathcal{C}\left( F(t_{n+1},u(t_{n+1})), F(t_n,u(t_n)) \right).
\end{displaymath}
Thus, the exact solution values satisfy the SET5 update at every step.
For this problem, the implicit update has at most one solution since $F(t_{n+1},\cdot)$ is strictly decreasing and $\mathcal{C}$ is strictly increasing in its first variable \cite[Lems.~2.1 and~A.1]{2026a_Jung}.
Hence, on this mesh, SET5 initialized by $u_0 = 1$ generates the unique sequence $\{u_n\}_{n=0}^{N}$ satisfying
\begin{displaymath}
    (t_n,u_n)
    =
    (t_n, u(t_n))
    =
    \gamma(t_n)
    =
    \left( t_n,\sqrt{1-t_n^2} \right), \qquad
    n = 0, 1, \ldots, N.
\end{displaymath}
The same conclusion is recovered geometrically from \cref{thm:SE_exact_constant_curvature}; see also \cref{ex:SE_trace}.

\subsection{The specular ellipse method}

The exact tracing of circular solution graphs by SET5 motivates replacing $\mathcal{C}$ with a scaled angular mean to extend the construction to elliptic solution graphs.

\begin{definition}  \label{def:scaled_angular_mean}
    For each scale $\sigma>0$, define the function $\mathcal{C}_{\sigma}: \mathbb{R}^2 \to \mathbb{R}$ by 
    \begin{equation}    \label{def:C_sigma}
        \mathcal{C}_{\sigma}(\alpha, \beta)
        :=
        \sigma \, \mathcal{C}\left( \frac{\alpha}{\sigma}, \frac{\beta}{\sigma} \right).
    \end{equation}
    When $\sigma = 1$, we simply write $\mathcal{C}(\alpha, \beta)$ for $\mathcal{C}_1(\alpha, \beta)$.
\end{definition}

The properties of the scaled angular mean $\mathcal{C}_{\sigma}$ needed below are established in \cref{apx:C_sigma}.
We now introduce the main numerical method studied in this paper, in which the scale $\sigma$ in $\mathcal{C}_\sigma$ may depend on the step index $n$.

\begin{definition} \label{def:SE}
    Consider the initial value problem \eqref{ODE:IVP}.
    Let $\{\sigma_n\}_{n=0}^{N_h-1}$ be a sequence of positive scales.
    The \emph{specular ellipse} (SE) method with the scale sequence $\{\sigma_n\}_{n=0}^{N_h-1}$ generates a sequence $\{u_n\}_{n=0}^{N_h}$ according to the implicit update
    \begin{equation} \label{mth:SE}
        u_{n+1} = u_n + h\,\mathcal{C}_{\sigma_n} \left( F(t_{n+1},u_{n+1}), F(t_n,u_n) \right),
    \end{equation}
    where $t_n:=t_0+nh$, $h>0$, and $u_0 = u(t_0)$.
    The update is applied for $n = 0, 1, \ldots, N_h-1$.
\end{definition}

If $\sigma_n=1$ for $n=0,1,\ldots,N_h-1$, then $\mathcal{C}_{\sigma_n}=\mathcal{C}$, and SE coincides with SET5.

For a prescribed positive scale sequence, the $n$-th SE update is obtained
by finding $v$ satisfying
\begin{equation}
    \label{eq:SE_implicit_step}
    v
    =
    u_n + h \, \mathcal{C}_{\sigma_n}
    \left(
        F(t_{n+1},v),
        F(t_n,u_n)
    \right)
\end{equation}
and setting $u_{n+1}:=v$.

\Cref{alg:SE} gives a fixed-point implementation of \eqref{eq:SE_implicit_step}.

\begin{algorithm}
  \caption{\textsc{Specular Ellipse Method (SE)}}
  \label{alg:SE}
  \begin{algorithmic}[1]
    \REQUIRE{
      $\Omega\subset\mathbb{R}^2$ open,
      $F:\Omega\to\mathbb{R}$,
      $(t_0,u_0)\in\Omega$,
      $T>t_0$,
      $h\in(0,T-t_0]$,
      $\{\sigma_n\}_{n=0}^{N_h - 1}\subset(0,\infty)$,
      $\zeta>0$,
      $M\in\mathbb{N}$
    }
    \STATE{$N\gets N_h$}
    \FOR{$n=0,1,\ldots,N-1$}
      \STATE{$t_n\gets t_0+nh$}
      \STATE{$t_{n+1}\gets t_n+h$}
      \STATE{$\beta\gets F(t_n,u_n)$}
      \STATE{$u_{\mathrm{temp}}\gets u_n+h\beta$}
      \STATE{$m\gets0$}
      \WHILE{$m<M$}
        \STATE{$\alpha\gets F(t_{n+1},u_{\mathrm{temp}})$}
        \STATE{
          $u_{\mathrm{guess}}
          \gets
          u_n + h \, \mathcal{C}_{\sigma_n}(\alpha,\beta)$
        }
        \IF{$|u_{\mathrm{guess}}-u_{\mathrm{temp}}| < \zeta$}
          \STATE{$u_{n+1}\gets u_{\mathrm{guess}}$}
          \BREAK
        \ENDIF
        \STATE{$u_{\mathrm{temp}}\gets u_{\mathrm{guess}}$}
        \STATE{$m\gets m+1$}
      \ENDWHILE
      \IF{$m=M$}
        \STATE{\textbf{stop with failure}}
      \ENDIF
    \ENDFOR
    \RETURN{$\{(t_n,u_n)\}_{n=0}^{N}$}
  \end{algorithmic}
\end{algorithm}

\section{Geometry and exactness of the specular ellipse method}
\label{sec:geometry_exactness_SE}

This section develops a geometric interpretation of SE and establishes an exactness result for scaled solution graphs of constant signed curvature.

\subsection{Scaled solution-graph and field quantities}

For \eqref{ODE:IVP}, we introduce the $\sigma$-\emph{scaled solution-graph quantities} associated with a solution $u$ and the $\sigma$-\emph{scaled field quantities} determined by $F$ and its partial derivatives.
The $\sigma$-scaled field quantities are defined independently of any particular solution.
When evaluated at $(t,u(t))$, they recover the tangent, unit tangent, and signed curvature of the $\sigma$-scaled solution graph and encode the variation of its curvature.

\begin{definition}[$\sigma$-scaled solution-graph quantities]
    Let $u\in C^2([t_0,T])$ be a solution of the initial value problem
    \eqref{ODE:IVP}, and fix $\sigma>0$.
    Define $\gamma_\sigma:[t_0,T]\to\mathbb{R}^2$ by
    \begin{displaymath}
        \gamma_\sigma(t)
        :=
        \left(t,\frac{u(t)}{\sigma}\right),
        \qquad
        t\in[t_0,T].
    \end{displaymath}
    Its image is called the \emph{$\sigma$-scaled solution graph} of $u$.
    The corresponding unit tangent is denoted by $\tau_\sigma$ and
    defined by
    \begin{displaymath}
        \tau_\sigma(t)
        :=
        \frac{\gamma_\sigma'(t)}
        {\left\|\gamma_\sigma'(t)\right\|_{\mathbb{R}^2}},
        \qquad
        t\in[t_0,T].
    \end{displaymath}
    We denote the signed curvature of $\gamma_\sigma$ by
    $\kappa_\sigma$; that is,
    \begin{equation}    \label{def:normalized_curvature}
        \kappa_\sigma(t)
        :=
        \frac{\det\left( \gamma_\sigma'(t), \gamma_\sigma''(t) \right)}{\left\|\gamma_\sigma'(t)\right\|_{\mathbb{R}^2}^{3}}
        =
        \frac{\sigma^2u''(t)}{\left(\sigma^2+u'(t)^2\right)^{\frac{3}{2}}}
        =
        \frac{u''(t)}{\sigma \left\|\gamma_\sigma'(t)\right\|_{\mathbb{R}^2}^{3}},
        \qquad
        t\in[t_0,T].
    \end{equation}
\end{definition}

If $u\in C^3([t_0,T])$ and $t \in [t_0, T]$, then differentiating \eqref{def:normalized_curvature} gives
\begin{equation}    \label{eq:normalized_curvature_derivative}
    \kappa_{\sigma}'(t)
    =
    \frac{u'''(t)}{\sigma \left\| \gamma_{\sigma}'(t) \right\|_{\mathbb{R}^2}^3} - \frac{3u'(t) u''(t)^2}{\sigma^3 \left\| \gamma_{\sigma}'(t) \right\|_{\mathbb{R}^2}^5},
    \qquad
    t\in[t_0,T].
\end{equation}

The solution-graph quantities above depend on the exact solution, which is generally unknown.
We therefore introduce $\sigma$-scaled field quantities determined for each fixed $\sigma > 0$ by the right-hand side $F$ and its partial derivatives.
The derivatives appearing in these quantities are defined using the following differential operator associated with $F$.

Let $\Omega\subset\mathbb{R}^2$ be open, and let $F\in C^2(\Omega)$.
For $G\in C^1(\Omega)$, define
\begin{displaymath}
    \mathcal{L}_FG(t,x)
    :=
    G_t(t,x)+F(t,x)G_x(t,x),
    \qquad
    (t,x)\in\Omega.
\end{displaymath}
If $G\in C^2(\Omega)$, define
\begin{displaymath}
    \mathcal{L}_F^2G(t,x)
    :=
    \mathcal{L}_F\bigl(\mathcal{L}_FG\bigr)(t,x),
    \qquad
    (t,x)\in\Omega.
\end{displaymath}

Together with $F$, these derivatives determine the $\sigma$-scaled field counterparts of the $\sigma$-scaled solution-graph quantities.

\begin{definition}[$\sigma$-scaled field quantities]
    Let $\Omega\subset\mathbb{R}^2$ be open, and let $F\in C^2(\Omega)$ be the right-hand side of \eqref{ODE:IVP}.
    Fix $\sigma>0$.
    Since $F$ is fixed, we suppress its dependence in the notation introduced below.
    Define the \emph{$\sigma$-scaled graph-tangent field} by
    \begin{displaymath}
        \mathcal{V}_\sigma(t,x)
        :=
        \left(1,\frac{F(t,x)}{\sigma}\right),
        \qquad
        (t,x)\in\Omega.
    \end{displaymath}
    The field $\mathcal{V}_\sigma$ never vanishes, since
    \begin{equation}    \label{eq:norm_V}
        \left\|\mathcal{V}_\sigma(t,x)\right\|_{\mathbb{R}^2}^2
        =
        1 + \frac{F(t,x)^2}{\sigma^2}.
    \end{equation}
    Define the corresponding \emph{unit graph-tangent field} by
    \begin{displaymath}
        \mathcal{T}_\sigma(t,x)
        :=
        \frac{\mathcal{V}_\sigma(t,x)}{\left\|\mathcal{V}_\sigma(t,x)\right\|_{\mathbb{R}^2}}
        =
        \frac{\left(\sigma,F(t,x)\right)}{\sqrt{\sigma^2+F(t,x)^2}}.
    \end{displaymath}
    Define the \emph{signed-curvature field associated with the
    $\sigma$-scaled solution graphs} by
    \begin{equation}
        \label{def:normalized_curvature_field}
        \mathcal{K}_\sigma(t,x)
        :=
        \frac{\mathcal{L}_FF(t,x)}{\sigma \left\|\mathcal{V}_\sigma(t,x)\right\|_{\mathbb{R}^2}^3}.
    \end{equation}
    Define the corresponding \emph{curvature-variation defect field} by
    \begin{equation}
        \label{def:normalized_curvature_defect}
        \mathcal{D}_\sigma(t,x)
        :=
        \mathcal{L}_F^2F(t,x) - \frac{3F(t,x)\bigl(\mathcal{L}_FF(t,x)\bigr)^2}{\sigma^2 \left\|\mathcal{V}_\sigma(t,x)\right\|_{\mathbb{R}^2}^2}.
    \end{equation}
    When no confusion can arise, we omit the arguments $(t,x)$ from $\mathcal{V}_\sigma$, $\mathcal{T}_\sigma$, $\mathcal{K}_\sigma$, and $\mathcal{D}_\sigma$.
\end{definition}

The scalar $\sigma$-scaled field quantities $\mathcal{K}_\sigma$ and
$\mathcal{D}_\sigma$ can be evaluated directly from the partial
derivatives of $F$, since
\begin{displaymath}
    \mathcal{L}_FF
    =
    F_t+FF_x
\end{displaymath}
and
\begin{displaymath}
    \mathcal{L}_F^2F
    =
    F_{tt}
    +2FF_{tx}
    +F^2F_{xx}
    +F_tF_x
    +FF_x^2.
\end{displaymath}
Here, all quantities on the right-hand sides are evaluated at $(t,x)$.
A direct calculation also gives
\begin{equation}
    \label{eq:transport_curvature_defect}
    \bigl(\mathcal{L}_F\mathcal{K}_\sigma\bigr)(t,x)
    =
    \frac{\mathcal{D}_\sigma(t,x) }{\sigma \left\|\mathcal{V}_\sigma(t,x)\right\|_{\mathbb{R}^2}^3},
    \qquad
    (t,x)\in\Omega.
\end{equation}

For convenience, the two families of quantities and their relationships are summarized in \cref{tbl:solution_graph_and_field_quantities}.

\begin{table}[tbhp]
    \centering
    \renewcommand{\arraystretch}{1.2}
    \caption{$\sigma$-scaled solution-graph quantities and the corresponding $\sigma$-scaled field quantities associated with \eqref{ODE:IVP}, induced by a solution $u$ and the right-hand side $F$, respectively.}
    \label{tbl:solution_graph_and_field_quantities}
    \begin{tabular}{@{}lcc@{}}
        \toprule
        \textbf{$\sigma$-scaled quantity}
        &
        \textbf{Solution graph}
        &
        \textbf{Field}
        \\
        \midrule

        Induced by
        &
        $u$
        &
        $F$
        \\

        Graph parametrization
        &
        $\gamma_\sigma$
        &
        --- 
        \\

        Graph tangent
        &
        $\displaystyle \gamma_\sigma'(t)$
        &
        $\displaystyle \mathcal{V}_\sigma(t,x)$
        \\

        Unit graph tangent
        &
        $\tau_\sigma(t)$
        &
        $\mathcal{T}_\sigma(t,x)$
        \\

        Signed curvature
        &
        $\displaystyle \kappa_\sigma(t)$
        &
        $\displaystyle \mathcal{K}_\sigma(t,x)$
        \\

        Curvature variation
        &
        $\displaystyle \kappa_\sigma'(t)$
        &
        $\displaystyle
        \mathcal{L}_F\mathcal{K}_\sigma(t,x)$
        \\

        Curvature defect
        &
        ---
        &
        $\displaystyle \mathcal{D}_\sigma(t,x)$
        \\

        \bottomrule
    \end{tabular}
\end{table}

We next examine the relationship between these two families of $\sigma$-scaled quantities.
Assume now that $u\in C^3([t_0,T])$ is a solution of \eqref{ODE:IVP} and that $(t, u(t)) \in \Omega$ for every $t \in [t_0, T]$.
For every $G\in C^1(\Omega)$, the chain rule gives
\begin{equation}    \label{eq:chain_rule}
    \frac{d}{dt}G(t, u(t))
    =
    \mathcal{L}_F G (t, u(t)),
    \qquad
    t\in(t_0,T).
\end{equation}
In particular, we have
\begin{displaymath} 
    u''(t)
    =
    \mathcal{L}_F F (t, u(t))
    \qquad\text{and}\qquad
    u'''(t)
    =
    \mathcal{L}_F^2 F (t, u(t)),
    \qquad
    t\in(t_0,T).
\end{displaymath}
Combining these equalities with \eqref{def:normalized_curvature_defect} and \eqref{eq:norm_V} gives
\begin{equation}    \label{eq:D_on_exact_sol}
    \mathcal{D}_\sigma(t, u(t))
    =
    u'''(t) - \frac{3 u'(t) u''(t)^2}{\sigma^2 + u'(t)^2},
    \qquad
    t\in(t_0,T).
\end{equation}
The graph-tangent and unit graph-tangent fields restrict to the corresponding solution-graph quantities:
\begin{equation}
    \label{eq:normalized_tangent_along_solution}
    \mathcal{V}_{\sigma} \bigl(t, u(t)\bigr)
    =
    \gamma_{\sigma}'(t)
    \qquad\text{and}\qquad
    \mathcal{T}_{\sigma} \bigl(t, u(t)\bigr)
    =
    \tau_{\sigma}(t),
    \qquad
    t\in(t_0,T).
\end{equation}
Likewise, the signed-curvature field restricts to the signed curvature of the $\sigma$-scaled solution graph:
\begin{equation}
    \label{eq:curvature_field_along_solution}
    \mathcal{K}_{\sigma} \bigl(t,u(t)\bigr)
    =
    \kappa_\sigma(t).
\end{equation}
Applying \eqref{eq:chain_rule} to $\mathcal{K}_\sigma$ and using \eqref{eq:transport_curvature_defect}, we obtain
\begin{equation}
    \label{eq:curvature_defect_relation-1}
    \kappa_\sigma'(t)
    =
    \bigl(\mathcal{L}_F\mathcal{K}_\sigma\bigr)
        \bigl(t,u(t)\bigr)
    =
    \frac{
        \mathcal{D}_\sigma\bigl(t,u(t)\bigr)
    }{
        \sigma
        \left\|
            \mathcal{V}_\sigma\bigl(t,u(t)\bigr)
        \right\|_{\mathbb{R}^2}^{3}
    }.
\end{equation}
Equivalently,
\begin{equation}   \label{eq:curvature_defect_relation-2}
    \mathcal{D}_\sigma\bigl(t,u(t)\bigr)
    =
    \sigma \left\| \gamma_\sigma'(t) \right\|_{\mathbb{R}^2}^{3} \kappa_\sigma'(t).
\end{equation}
Thus, $\mathcal{D}_\sigma$ encodes the variation of the signed curvature
along the $\sigma$-scaled solution graph.
Since $\sigma \left\| \mathcal{V}_\sigma\bigl(t,u(t)\bigr) \right\|_{\mathbb{R}^2}^{3} > 0$, $\mathcal{D}_\sigma\bigl(t,u(t)\bigr)$ and
$\kappa_\sigma'(t)$ have the same zeros and the same sign.

\subsection{Exactness}

We next introduce the discrete counterpart of the $\sigma$-scaled solution graph.

\begin{definition}
    \label{def:scaled_numerical_solution_graph}
    Let $N\in\mathbb{N}$ and $h>0$, and set $t_n:=t_0+nh$, $n=0,1,\ldots,N$, where $t_N\leq T$.
    Let $\{u_n\}_{n=0}^{N}\subset\mathbb{R}$ be a candidate numerical sequence such that $(t_n,u_n)\in\Omega$ for all $n=0,1,\ldots,N$.
    Fix $\sigma>0$ and define the \emph{$\sigma$-scaled numerical solution-graph points} by
    \begin{displaymath}
        \Gamma_\sigma(n)
        :=
        \left(t_n,\frac{u_n}{\sigma}\right),
        \qquad
        n=0,1,\ldots,N.
    \end{displaymath}
    The polygonal curve joining the points $\{\Gamma_\sigma(n)\}_{n=0}^{N}$ is called the \emph{$\sigma$-scaled numerical solution graph}.
\end{definition}

If $u_n=u(t_n)$ for a solution $u$ of \eqref{ODE:IVP}, then $\Gamma_\sigma(n)=\gamma_\sigma(t_n)$.

For a sequence of positive scales $\{\sigma_n\}$, the $n$-th SE step is represented in the $\sigma_n$-scaled plane by the chord joining $\Gamma_{\sigma_n}(n)$ and $\Gamma_{\sigma_n}(n+1)$.
The following lemma characterizes the SE relation by comparing the direction of this chord with that of the sum of the corresponding unit field tangents.

\begin{lemma}
    \label{lem:SE_chord_tangent}
    Fix $n\in\mathbb{N} \cup \{0\}$ and $\sigma > 0$ and suppose that $(t_n,u_n),(t_{n+1},u_{n+1})\in\Omega$.
    Then the following statements are equivalent.
    \begin{enumerate}[label=\upshape(\roman*)]
        \item \label{lem:SE_chord_tangent-1} The pair $u_n,u_{n+1}$ satisfies the SE relation \eqref{mth:SE} with $\sigma_n=\sigma$.
        \item \label{lem:SE_chord_tangent-2} The following unit directions coincide:
        \begin{equation}
            \label{eq:SE_chord_tangent}
            \frac{\Gamma_{\sigma}(n+1)-\Gamma_{\sigma}(n)}{\left\|\Gamma_{\sigma}(n+1)-\Gamma_{\sigma}(n)\right\|_{\mathbb{R}^2}}
            =
            \frac{\mathcal{T}_{\sigma}(t_{n+1},u_{n+1}) + \mathcal{T}_{\sigma}(t_n,u_n)}{\left\|\mathcal{T}_{\sigma}(t_{n+1},u_{n+1}) + \mathcal{T}_{\sigma}(t_n,u_n) \right\|_{\mathbb{R}^2}}.
        \end{equation}
        \item \label{lem:SE_chord_tangent-3} There exists $\lambda \equiv \lambda(n, \sigma, u_n, u_{n+1}) > 0$ such that
        \begin{displaymath}
            \Gamma_{\sigma}(n+1)-\Gamma_{\sigma}(n)
            =
            \lambda \left( \mathcal{T}_{\sigma}(t_{n+1},u_{n+1}) + \mathcal{T}_{\sigma}(t_n,u_n) \right).
        \end{displaymath}
    \end{enumerate}
\end{lemma}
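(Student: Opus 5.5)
We will prove (i)$\Leftrightarrow$(iii) by a direct computation and then (ii)$\Leftrightarrow$(iii) by elementary vector algebra. Write $\alpha:=F(t_{n+1},u_{n+1})$ and $\beta:=F(t_n,u_n)$, and set $a:=1/\sqrt{\sigma^2+\alpha^2}$ and $b:=1/\sqrt{\sigma^2+\beta^2}$. Then
\begin{displaymath}
    \mathcal{T}_\sigma(t_{n+1},u_{n+1})+\mathcal{T}_\sigma(t_n,u_n)
    =
    \bigl(\sigma(a+b),\ \alpha a+\beta b\bigr),
\end{displaymath}
and its first component is strictly positive. Unwinding \eqref{def:C_sigma} with the definition of $\mathcal{C}$ and multiplying numerator and denominator by $1/\sigma$ gives
\begin{displaymath}
    \mathcal{C}_\sigma(\alpha,\beta)
    =
    \sigma\,\frac{\alpha a+\beta b}{\sigma(a+b)}.
\end{displaymath}
The key observation is therefore
\begin{displaymath}
    \frac{\mathcal{C}_\sigma(\alpha,\beta)}{\sigma}
    =
    \frac{\text{second component of the sum}}{\text{first component of the sum}},
\end{displaymath}
so $\mathcal{C}_\sigma(\alpha,\beta)/\sigma$ is the slope of the vector $\mathcal{T}_\sigma(t_{n+1},u_{n+1})+\mathcal{T}_\sigma(t_n,u_n)$ in the $\sigma$-scaled plane.

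For (i)$\Rightarrow$(iii), the chord is
\begin{displaymath}
    \Gamma_\sigma(n+1)-\Gamma_\sigma(n)=\bigl(h,(u_{n+1}-u_n)/\sigma\bigr).
\end{displaymath}
By \eqref{mth:SE} its second component equals $h\,\mathcal{C}_\sigma(\alpha,\beta)/\sigma$. Choosing $\lambda:=h/(\sigma(a+b))>0$ and using the identity above shows that the chord equals $\lambda$ times the sum. For (iii)$\Rightarrow$(i), comparing first components forces $\lambda=h/(\sigma(a+b))$, which is positive since $h>0$. Comparing second components then gives $(u_{n+1}-u_n)/\sigma=h(\alpha a+\beta b)/(\sigma(a+b))$, which is exactly \eqref{mth:SE} after multiplying by $\sigma$.

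For (ii)$\Leftrightarrow$(iii), both the chord and the tangent sum are nonzero: the chord has first component $h>0$, and the sum has first component $\sigma(a+b)>0$. Two nonzero vectors have equal unit directions if and only if one is a positive multiple of the other, which gives the equivalence at once; explicitly, $\lambda$ is the ratio of the norms.

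There is no real obstacle here. The only points requiring care are that $\mathcal{T}_\sigma(t_{n+1},u_{n+1})+\mathcal{T}_\sigma(t_n,u_n)\neq0$, so that the normalization in \eqref{eq:SE_chord_tangent} is defined, and the algebraic rewriting of $\mathcal{C}_\sigma$ into the $a,b$ form.
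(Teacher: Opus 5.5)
Your proof is correct and essentially matches the paper's. Both arguments rest on the identity $\mathcal{T}_\sigma(t_{n+1},u_{n+1})+\mathcal{T}_\sigma(t_n,u_n)=\sigma(a+b)\bigl(1,\mathcal{C}_\sigma(\alpha,\beta)/\sigma\bigr)$ together with the positivity of the first components. The only difference is cosmetic: the paper proves (i)$\Leftrightarrow$(ii) by comparing normalized slopes and then takes $\lambda$ as a ratio of norms for (ii)$\Leftrightarrow$(iii), whereas you prove (i)$\Leftrightarrow$(iii) directly with the forced multiplier $\lambda=h/(\sigma(a+b))$.
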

    
\begin{proof}
    Note that \cref{lem:SE_chord_tangent-1} holds if and only if
    \begin{equation}    \label{eq:SE_chord_tangent-1}
        \frac{u_{n+1}-u_n}{h\sigma}
        =
        \frac{\mathcal{C}_\sigma \left( \alpha_n, \beta_n \right)}{\sigma},
    \end{equation}
    where $\alpha_n := F(t_{n+1},u_{n+1})$ and $\beta_n := F(t_n,u_n)$.

    We claim that \cref{lem:SE_chord_tangent-2} holds if and only if \eqref{eq:SE_chord_tangent-1} holds.
    On the one hand, by the definition of $\Gamma_\sigma$,
    \begin{displaymath}
        \frac{\Gamma_\sigma(n+1)-\Gamma_\sigma(n)}{\left\| \Gamma_\sigma(n+1)-\Gamma_\sigma(n) \right\|_{\mathbb{R}^2}}
        =
        \left( 1 + \left( \frac{u_{n+1} - u_n}{h \sigma} \right)^2 \right)^{-\frac{1}{2}} \left( 1, \frac{u_{n+1} - u_n}{h \sigma} \right).
    \end{displaymath}
    On the other hand, by the definitions of $\mathcal{T}_\sigma$ and $\mathcal{C}_\sigma$, 
    \begin{displaymath}
        \mathcal{T}_\sigma(t_{n+1},u_{n+1}) + \mathcal{T}_\sigma(t_n,u_n)
        =
        \sigma
        \left( \frac{1}{\sqrt{\sigma^2+\alpha_n^2}} + \frac{1}{\sqrt{\sigma^2+\beta_n^2}} \right)
        \left( 1, \frac{\mathcal{C}_\sigma(\alpha_n,\beta_n)}{\sigma} \right),
    \end{displaymath}
    and hence 
    \begin{displaymath}
        \frac{ \mathcal{T}_\sigma(t_{n+1},u_{n+1}) + \mathcal{T}_\sigma(t_n,u_n) }{ \left\| \mathcal{T}_\sigma(t_{n+1},u_{n+1}) + \mathcal{T}_\sigma(t_n,u_n) \right\|_{\mathbb{R}^2} }
        =
        \left( 1 + \left( \dfrac{\mathcal{C}_\sigma \left( \alpha_n, \beta_n \right)}{\sigma} \right)^2 \right)^{-\frac{1}{2}} \left( 1, \dfrac{\mathcal{C}_\sigma \left( \alpha_n, \beta_n \right)}{\sigma}  \right) .
    \end{displaymath}
    Consequently, the claim follows.
    Therefore, \cref{lem:SE_chord_tangent-1} and \cref{lem:SE_chord_tangent-2} are equivalent.

    Finally, define
    \begin{displaymath}
        \lambda
        :=
        \frac{\left\|\Gamma_{\sigma}(n+1)-\Gamma_{\sigma}(n)\right\|_{\mathbb{R}^2}}{\left\|\mathcal{T}_{\sigma}(t_{n+1},u_{n+1}) + \mathcal{T}_{\sigma}(t_n,u_n) \right\|_{\mathbb{R}^2}}
        >
        0.
    \end{displaymath}
    Then \cref{lem:SE_chord_tangent-2} and \cref{lem:SE_chord_tangent-3} are equivalent.
\end{proof}

The preceding geometric characterization allows us to identify a class of problems for which SE reproduces the exact solution at every mesh point.

\begin{theorem}[Exactness]
    \label{thm:SE_exact_constant_curvature}
    Let $u\in C^3([t_0,T])$ be a solution of \eqref{ODE:IVP}.
    Suppose that there exists $\sigma>0$ such that
    \begin{equation}
        \label{eq:exact_assumption}
        \kappa_\sigma'(t)=0
    \end{equation}
    for every $t\in(t_0,T)$.
    Let $h>0$.
    Suppose further that \eqref{eq:SE_implicit_step} with $\sigma_n=\sigma$ has a unique admissible solution at each step, where admissibility means $(t_{n+1},v)\in\Omega$.
    Then SE with $u_0 = u(t_0)$ and $\sigma_n = \sigma$ satisfies
    \begin{displaymath}
        u_n=u(t_n)
    \end{displaymath}
    for every $n = 0, 1, \ldots, N_h$.
\end{theorem}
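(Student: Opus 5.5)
The plan is to show that the exact values $v=u(t_{n+1})$ satisfy the implicit relation \eqref{eq:SE_implicit_step} at each step whenever $u_n=u(t_n)$. The uniqueness hypothesis then forces $u_{n+1}=u(t_{n+1})$, and induction from $u_0=u(t_0)$ gives the claim. Admissibility of this candidate is automatic, since $(t_{n+1},u(t_{n+1}))\in\Omega$ for a solution of \eqref{ODE:IVP}. So the whole proof reduces to a single geometric statement: for a planar $C^2$ graph curve of constant signed curvature, every chord is parallel to the sum of the unit tangents at its endpoints. By \cref{lem:SE_chord_tangent} (equivalence of \ref{lem:SE_chord_tangent-1} and \ref{lem:SE_chord_tangent-3}), together with \eqref{eq:normalized_tangent_along_solution} identifying $\mathcal{T}_\sigma(t_k,u(t_k))=\tau_\sigma(t_k)$, it suffices to show
\begin{displaymath}
\gamma_\sigma(t_{n+1})-\gamma_\sigma(t_n)=\lambda\bigl(\tau_\sigma(t_{n+1})+\tau_\sigma(t_n)\bigr)
\end{displaymath}
for some $\lambda>0$.

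To prove the chord property, first note that $\kappa_\sigma'\equiv0$ on $(t_0,T)$ together with continuity of $\kappa_\sigma$ (since $u\in C^2$ on the closed interval, actually $C^3$) gives $\kappa_\sigma\equiv\kappa$ constant on $[t_0,T]$. Reparametrize by arc length $s$, and write $\tau_\sigma=(\cos\theta,\sin\theta)$ with $\theta\in(-\pi/2,\pi/2)$, which is possible because the first component of $\gamma_\sigma'$ is $1>0$. Then $d\theta/ds=\kappa$.

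\emph{Case $\kappa=0$.} Here $\theta$ is constant, so the curve is a line segment, $\tau_\sigma(t_{n+1})=\tau_\sigma(t_n)$, and the chord is a positive multiple of $2\tau_\sigma(t_n)$.

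\emph{Case $\kappa\neq0$.} Here $\theta(s)=\theta_0+\kappa s$, and integrating gives
\begin{displaymath}
\gamma_\sigma(s_2)-\gamma_\sigma(s_1)=\frac{2}{\kappa}\sin\Bigl(\frac{\theta_2-\theta_1}{2}\Bigr)\Bigl(\cos\tfrac{\theta_1+\theta_2}{2},\sin\tfrac{\theta_1+\theta_2}{2}\Bigr).
\end{displaymath}
The sum of unit tangents is
\begin{displaymath}
\tau_1+\tau_2=2\cos\Bigl(\frac{\theta_2-\theta_1}{2}\Bigr)\Bigl(\cos\tfrac{\theta_1+\theta_2}{2},\sin\tfrac{\theta_1+\theta_2}{2}\Bigr).
\end{displaymath}

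The main obstacle is positivity of $\lambda$ in the case $\kappa\neq0$, i.e.\ that the two vectors point in the same direction rather than opposite ones. The ratio is
\begin{displaymath}
\lambda=\frac{\tan\bigl((\theta_2-\theta_1)/2\bigr)}{\kappa}.
\end{displaymath}
Since $\theta_j\in(-\pi/2,\pi/2)$, we have $|\theta_2-\theta_1|<\pi$, so $\cos((\theta_2-\theta_1)/2)>0$ and the tangent sum is nonzero. Moreover $\theta_2-\theta_1=\kappa(s_2-s_1)$ with $s_2>s_1$, so $\theta_2-\theta_1$ has the sign of $\kappa$, giving $\lambda>0$. If $\theta_2=\theta_1$ this would force $s_2=s_1$, which is impossible for $h>0$.

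One can cross-check this more simply: the first components of both the chord ($h$) and the tangent sum ($\sigma$-normalized positive quantities) are positive. Hence the two vectors are parallel with positive ratio as soon as they are parallel, which sidesteps the sign issue entirely. With the chord property established, \cref{lem:SE_chord_tangent} gives that $u(t_{n+1})$ solves \eqref{eq:SE_implicit_step}, and uniqueness together with induction on $n$ finishes the proof.
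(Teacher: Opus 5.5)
Your proof is correct. Its outer structure matches the paper's: reduce to the chord--tangent property, invoke \cref{lem:SE_chord_tangent}, then induct using the admissibility of $u(t_{n+1})$ and the assumed uniqueness. The difference lies in the geometric core of the case $\kappa\neq0$.

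The paper stays in the original parameter $t$. It introduces the unit normal $\nu_\sigma=(-u',\sigma)/\sqrt{\sigma^2+u'^2}$ and shows that $\gamma_\sigma+c^{-1}\nu_\sigma$ is constant, so the scaled graph lies on a circle of radius $|c|^{-1}$. It then observes that both the chord and $\tau_\sigma(t)+\tau_\sigma(s)$ are orthogonal to the nonzero vector $\nu_\sigma(t)+\nu_\sigma(s)$, and uses positivity of first components to fix the orientation.

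You instead pass to arc length and integrate $\theta'=\kappa$ explicitly. This writes both vectors as multiples of $(\cos m,\sin m)$ with $m=\frac{\theta_1+\theta_2}{2}$ and gives the explicit ratio $\lambda=\tan\bigl(\frac{\theta_2-\theta_1}{2}\bigr)/\kappa$.

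The paper's argument is coordinate-free and needs neither reparametrization nor trigonometric identities, and it exhibits the circle directly. Yours is more quantitative and makes the link to the method transparent. Since $\tan\theta_j=u'(t_j)/\sigma$, your chord formula says the scaled chord slope is $\tan\bigl(\frac{1}{2}\arctan\frac{u'(t_{n+1})}{\sigma}+\frac{1}{2}\arctan\frac{u'(t_n)}{\sigma}\bigr)$, which is exactly $\mathcal{B}\bigl(\frac{u'(t_{n+1})}{\sigma},\frac{u'(t_n)}{\sigma}\bigr)$. Combined with $\mathcal{B}=\mathcal{C}$, this yields the SE relation for the exact values directly, so you could even bypass \cref{lem:SE_chord_tangent}.
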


\begin{proof}
    Fix $s, t\in[t_0,T]$ with $s < t$.
    We claim that
    \begin{equation} \label{eq:SE_exact_constant_curvature-0}
        \gamma_{\sigma}(t)-\gamma_{\sigma}(s)
        \qquad\text{and}\qquad
        \tau_{\sigma}(t)+\tau_{\sigma}(s)
    \end{equation}
    have the same direction.
    Since $\kappa_\sigma'=0$ on $(t_0, T)$, the function $\kappa_\sigma$ is constant on $(t_0, T)$.
    By continuity, there exists $c\in\mathbb{R}$ such that $\kappa_\sigma(t)=c$ for all $t \in [t_0, T]$.

    If $c=0$, then \eqref{def:normalized_curvature} gives $u''=0$.
    Hence, $\gamma_\sigma'$ is constant, and
    \begin{displaymath}
        \gamma_{\sigma}(t) - \gamma_{\sigma}(s)
        =
        (t-s) \gamma_{\sigma}'(s) 
        \qquad\text{and}\qquad
        \tau_{\sigma}(t)+\tau_\sigma(s)
        =
        \frac{2}{\left\lVert\gamma_{\sigma}'(s)\right\rVert_{\mathbb{R}^2}} \gamma_{\sigma}'(s).
    \end{displaymath}
    Since $t-s>0$, both vectors are positive scalar multiples of $\gamma_\sigma'(s)$ and therefore have the same direction.

    Suppose that $c\neq 0$.
    Define the function $\nu_\sigma : [t_0, T] \to \mathbb{R}^2$ by
    \begin{displaymath}
        \nu_\sigma(z)
        :=
        \frac{(-u'(z),\sigma)}
        {\sqrt{\sigma^2+u'(z)^2}}.
    \end{displaymath}
    A direct differentiation gives
    \begin{displaymath}
        \frac{d}{dz} \left( \gamma_\sigma(z) + \frac{1}{c}\nu_\sigma(z) \right)
        =
        \gamma_\sigma'(z) - \frac{1}{c} \, \kappa_\sigma(z) \, \gamma_\sigma'(z)
        =
        0
    \end{displaymath}
    for every $z \in (t_0, T)$.
    Hence, by continuity, there exists a point $(p, q) \in \mathbb{R}^2$ such that $\gamma_\sigma(z) + \frac{1}{c}\nu_\sigma(z) = (p, q)$ for every $z \in [t_0, T]$.
    Then
    \begin{equation}   \label{eq:SE_exact_constant_curvature-1}
        \nu_{\sigma}(t) + \nu_{\sigma}(s) 
        =
        -c \left( \gamma_\sigma(t) + \gamma_\sigma(s) - 2(p, q) \right)
        \neq
        (0, 0),
    \end{equation}
    where the last inequality follows since the second component of the left-hand side is strictly positive.
    Note that 
    \begin{equation} \label{eq:SE_exact_constant_curvature-2}
        \left\lVert \gamma_\sigma(z)-(p,q) \right\rVert_{\mathbb{R}^2}
        =
        \frac{1}{|c|} \left\lVert\nu_\sigma(z)\right\rVert_{\mathbb{R}^2}
        =
        \frac{1}{|c|}
    \end{equation}
    for all $z \in [t_0, T]$.
    Hence, $\gamma_\sigma$ lies on the circle with center $(p, q)$ and radius $|c|^{-1}$.

    On the one hand, \eqref{eq:SE_exact_constant_curvature-1} and \eqref{eq:SE_exact_constant_curvature-2} yield
    \begin{align*}
        (\nu_{\sigma}(t) + \nu_{\sigma}(s)) \innerprd (\gamma_{\sigma}(t) - \gamma_{\sigma}(s))
        &= -c \left\| \gamma_\sigma(t) - (p, q) \right\|_{\mathbb{R}^2}^2 + c \left\| \gamma_\sigma(s) - (p, q) \right\|_{\mathbb{R}^2}^2    \\
        &= 0.
    \end{align*}
    On the other hand, since $\tau_\sigma(z) \innerprd \nu_\sigma(z) = 0$ for every $z \in [t_0,T]$, 
    \begin{align*}
        (\nu_{\sigma}(t) + \nu_{\sigma}(s)) \innerprd (\tau_{\sigma}(t) + \tau_{\sigma}(s))
        &= \nu_{\sigma}(s) \innerprd \tau_{\sigma}(t) + \nu_{\sigma}(t) \innerprd \tau_{\sigma}(s) \\
        &= \frac{ \sigma\bigl(u'(t)-u'(s)\bigr) + \sigma\bigl(u'(s)-u'(t)\bigr) }{ \sqrt{\sigma^2+u'(t)^2} \sqrt{\sigma^2+u'(s)^2}}   \\
        &= 0.
    \end{align*}
    Therefore, both vectors in \eqref{eq:SE_exact_constant_curvature-0} are orthogonal to the same vector $\nu_{\sigma}(t) + \nu_{\sigma}(s)$.
    By \eqref{eq:SE_exact_constant_curvature-1}, the common orthogonal vector $\nu_{\sigma}(t)+\nu_{\sigma}(s)$ is nonzero.
    Its orthogonal complement in $\mathbb{R}^2$ is one-dimensional.
    Hence, the two vectors in \eqref{eq:SE_exact_constant_curvature-0} are parallel.
    Since both have strictly positive first components, the two vectors have the same direction.
    This proves the claim.

    Fix $n\in\{ 0,1,\ldots, N_h-1 \}$ and apply \cref{lem:SE_chord_tangent} to the pair $u(t_n), u(t_{n+1})$.
    For this pair, the definitions of $\Gamma_\sigma$ and $\gamma_\sigma$ give
    \begin{displaymath}
        \Gamma_{\sigma}(n+1)-\Gamma_{\sigma}(n)
        =
        \gamma_{\sigma}(t_{n+1})-\gamma_{\sigma}(t_n).
    \end{displaymath}
    Moreover, \eqref{eq:normalized_tangent_along_solution} gives
    \begin{displaymath}
        \mathcal{T}_{\sigma}(t_{n+1},u(t_{n+1})) + \mathcal{T}_{\sigma}(t_n,u(t_n))
        =
        \tau_{\sigma}(t_{n+1}) + \tau_{\sigma}(t_n).
    \end{displaymath}
    Taking $s = t_n$ and $t = t_{n+1}$ in the claim, we conclude that \cref{lem:SE_chord_tangent} \ref{lem:SE_chord_tangent-2} holds.
    The equivalent condition \cref{lem:SE_chord_tangent} \ref{lem:SE_chord_tangent-1} therefore gives
    \begin{equation}    \label{eq:SE_exact_constant_curvature-3}
        u(t_{n+1})
        =
        u(t_n) + h \, \mathcal{C}_{\sigma} \left( F(t_{n+1},u(t_{n+1})), F(t_n,u(t_n)) \right).
    \end{equation}
    Since $n$ was arbitrary, this identity holds for every $n = 0, 1, \ldots, N_h-1$.

    It remains to show that the numerical values coincide with the exact values at the mesh points.
    The initial condition gives $u_0 = u(t_0)$.
    Assume inductively that $u_n = u(t_n)$.
    By the induction hypothesis and \eqref{eq:SE_exact_constant_curvature-3}, $u(t_{n+1})$ is an admissible solution of \eqref{eq:SE_implicit_step} with $\sigma_n=\sigma$.
    By the assumed uniqueness, $u_{n+1}=u(t_{n+1})$.
    The conclusion follows by induction.
\end{proof}

\begin{example} \label{ex:SE_trace}
    Fix $a,b>0$, $p\geq0$, and $q\in\mathbb{R}$.
    Fix $t_0,T\in(p-a,p+a)$ with $0\leq t_0<T$.
    Consider the problem \eqref{ODE:IVP} with
    \begin{equation} \label{ODE:ellipse}
        F(t, u)
        =
        -\frac{b^2(t-p)}{a^2(u-q)},
        \qquad
        (t, u)\in (p-a,p+a)\times(q,\infty).
    \end{equation}
    The exact solution on the upper elliptic branch is
    \begin{equation} \label{eq:ellipse_sol}
        u(t)
        =
        q+b \left( 1 - \left(\frac{t-p}{a}\right)^2 \right)^{\frac{1}{2}}, \qquad t\in[t_0, T],
    \end{equation}
    where the initial value is chosen as $u_0 := u(t_0)$.
    The graph of this solution lies on the ellipse
    \begin{displaymath}
        \left(\frac{t-p}{a}\right)^2
        +
        \left(\frac{u-q}{b}\right)^2
        =1.
    \end{displaymath}

    We now determine the scale from the constant-curvature condition.
    Let $\sigma>0$ be arbitrary.
    A direct calculation from \eqref{def:normalized_curvature} gives
    \begin{displaymath}
        \kappa_\sigma'(t)
        =
        3ab\sigma^2
        \left(
            \frac{b^2}{a^2}-\sigma^2
        \right)
        (t-p)
        \left(
            \sigma^2\bigl(a^2-(t-p)^2\bigr)
            +
            \frac{b^2}{a^2}(t-p)^2
        \right)^{-\frac{5}{2}}.
    \end{displaymath}
    Since $t_0 < T$ and $a,b,\sigma > 0$, it follows that $\kappa_\sigma' \equiv 0$ on $(t_0, T)$ if and only if $\sigma=\frac{b}{a}$.
    Thus, if \eqref{eq:SE_implicit_step} with $\sigma_n=\frac{b}{a}$ has a unique admissible solution at each step, \cref{thm:SE_exact_constant_curvature} shows that the numerical values coincide with the exact solution values.
    A numerical illustration of this exact-tracing property on an upper-branch segment is given in \cref{fig:ellipse_exactness}.

    When $p=q=0$ and $a=b=1$, the elliptic solution reduces to the unit-circle solution of \eqref{ode:circle_eq}, and the exact-tracing scale is $\sigma=1$.
\end{example}

\section{Convergence analysis for the specular ellipse method}
\label{sec:convergence}

In the preceding section, exact tracing was obtained under \eqref{eq:exact_assumption} when \eqref{eq:SE_implicit_step} with $\sigma_n=\sigma$ has a unique admissible solution at each step.
Since this condition need not hold for a general ODE, exact tracing is an exceptional property.
In this section, we instead regard SE as a numerical method for general smooth ODEs and study its consistency and convergence.

\subsection{Consistency} 

We begin by defining the local truncation error in the standard way.

\begin{definition}
    Let $u:[t_0,T]\to\mathbb{R}$ be a solution of \eqref{ODE:IVP}.
    For each $h>0$, let $\{\sigma_n\}_{n=0}^{N_h-1}$ be a sequence of positive scales.
    Define the \emph{local truncation error} of SE with the scale sequence $\{\sigma_n\}_{n=0}^{N_h-1}$ by
    \begin{equation}    \label{def:lte}
        \ell_{n+1}
        :=
        \frac{u(t_{n+1})-u(t_n)}{h}
        -
        \mathcal{C}_{\sigma_n} \left( F(t_{n+1}, u(t_{n+1})), F(t_n, u(t_n)) \right)
    \end{equation}
    for $n=0,1,\ldots,N_h-1$.
\end{definition}

The following theorem gives the local truncation error expansions used in the consistency analysis and relates their leading terms to the signed-curvature derivatives of the corresponding $\sigma_n$-scaled solution graphs.

\begin{theorem}
    \label{thm:lte}
    Suppose that hypothesis~\ref{H2} holds and that $F\in C^2(\Omega)$.
    \begin{enumerate}[label=\upshape(\alph*)]
        \item 
        \label{thm:lte-1}
        If $u \in C^3([t_0,T])$, then
        \begin{equation}
            \label{eq:lte-2nd}
            \ell_{n+1}
            =
            - \frac{h^2}{12} \sigma_n \left\lVert\gamma_{\sigma_n}'(t_n)\right\rVert^3 \kappa_{\sigma_n}'(t_n)
            + R_{n+1}(h),
        \end{equation}
        where there exists a function $\omega:(0, H] \to [0, \infty)$ such that $\omega(h)\to0$ as $h\searrow0$ and
        \begin{displaymath}
            |R_{n+1}(h)|
            \leq
            \omega(h)h^2
        \end{displaymath}
        whenever $0<h\leq H$ and $t_{n+1}\leq T$.

        \item
        \label{thm:lte-2}
        If $u\in C^4([t_0,T])$, then
        \begin{equation}
            \label{eq:lte-3rd}
            \ell_{n+1}
            =
            -\frac{h^2}{12} \sigma_n \left\lVert\gamma_{\sigma_n}'(t_n)\right\rVert^3 \kappa_{\sigma_n}'(t_n)
            + R_{n+1}(h),
        \end{equation}
        where there exists a constant $C>0$, independent of $h$ and $n$, such that
        \begin{displaymath}
            |R_{n+1}(h)|
            \leq
            Ch^3
        \end{displaymath}
        whenever $h>0$ is sufficiently small and $t_{n+1}\leq T$.

        \item
        \label{thm:lte-3}
        If $u\in C^5([t_0,T])$, then 
        \begin{displaymath} 
            \ell_{n+1}
            =
            -\frac{h^2}{24} \sigma_n \left[  \left\lVert\gamma_{\sigma_n}'(t_n)\right\rVert^3 \kappa_{\sigma_n}'(t_n) + \left\lVert\gamma_{\sigma_n}'(t_{n+1})\right\rVert^3 \kappa_{\sigma_n}'(t_{n+1}) \right]
            + R_{n+1}(h),
        \end{displaymath}
        where there exists a constant $C>0$, independent of $h$ and $n$, such that
        \begin{displaymath}
            |R_{n+1}(h)|
            \leq
            Ch^4
        \end{displaymath}
        whenever $h>0$ is sufficiently small and $t_{n+1}\leq T$.
    \end{enumerate}
\end{theorem}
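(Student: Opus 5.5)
The plan is to rewrite the local truncation error entirely in terms of the exact solution and Taylor-expand it in $h$. By \eqref{eq:repr_B}--\eqref{eq:repr_C} and \eqref{def:C_sigma},
\begin{displaymath}
\mathcal{C}_\sigma(\alpha,\beta)=\sigma\tan\Bigl(\tfrac12\arctan\tfrac{\alpha}{\sigma}+\tfrac12\arctan\tfrac{\beta}{\sigma}\Bigr).
\end{displaymath}
Along the exact solution we have $F(t,u(t))=u'(t)$. So with $\theta_\sigma(t):=\arctan\bigl(u'(t)/\sigma\bigr)$, the angle of $\tau_\sigma(t)$,
\begin{displaymath}
\ell_{n+1}=\frac{u(t_{n+1})-u(t_n)}{h}-\sigma_n\tan\Bigl(\tfrac12\theta_{\sigma_n}(t_{n+1})+\tfrac12\theta_{\sigma_n}(t_n)\Bigr).
\end{displaymath}
The useful identities are $\theta_\sigma'=\sigma u''/(\sigma^2+u'^2)=\|\gamma_\sigma'\|\kappa_\sigma$ and $\sigma\sec^2\theta_\sigma=(\sigma^2+u'^2)/\sigma$. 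The target coefficient is $\sigma\|\gamma_\sigma'\|^3\kappa_\sigma'=\mathcal{D}_\sigma(t,u(t))$, given explicitly by \eqref{eq:D_on_exact_sol}.

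\textbf{Midpoint expansion.} I will expand about the midpoint $m:=t_n+h/2$ rather than about $t_n$. Define
\begin{displaymath}
E_\sigma(m,h):=\frac{u(m+h/2)-u(m-h/2)}{h}-\mathcal{C}_\sigma\bigl(u'(m+h/2),u'(m-h/2)\bigr).
\end{displaymath}
Since $\mathcal{C}_\sigma$ is symmetric, $E_\sigma(m,\cdot)$ is even in $h$, so all odd powers of $h$ vanish.

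\textbf{The $h^2$ coefficient.} Expand $u(m\pm h/2)$ and $u'(m\pm h/2)$. Then expand $\sigma\tan$ of the averaged angle $\bar\theta=\theta(m)+\tfrac{h^2}{8}\theta''(m)+O(h^4)$ via $\sigma\tan\bar\theta=u'(m)+\sigma\sec^2\theta\,(\bar\theta-\theta)+\dots$. This gives
\begin{displaymath}
E_\sigma(m,h)=h^2\Bigl[\tfrac{u'''}{24}-\tfrac{\sigma\sec^2\theta\,\theta''}{8}\Bigr](m)+O(h^4).
\end{displaymath}
Next compute $\theta''$ from $\theta'=\sigma u''/(\sigma^2+u'^2)$:
\begin{displaymath}
\theta''=\frac{\sigma u'''}{\sigma^2+u'^2}-\frac{2\sigma u'u''^2}{(\sigma^2+u'^2)^2}.
\end{displaymath}
Substituting, the bracket becomes $-\tfrac1{12}\bigl(u'''-3u'u''^2/(\sigma^2+u'^2)\bigr)=-\tfrac1{12}\mathcal{D}_\sigma(m,u(m))$. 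This is the routine algebra I will verify. It also explains exactness: the coefficient vanishes exactly when $\kappa_\sigma'=0$, consistent with \cref{thm:SE_exact_constant_curvature}.

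\textbf{Deriving the three parts.} For \ref{thm:lte-3}, with $u\in C^5$, the $O(h^4)$ remainder is justified by Taylor's theorem with integral remainder. Then I replace $\mathcal{D}(m)$ by $\tfrac12[\mathcal{D}(t_n)+\mathcal{D}(t_{n+1})]$; this trapezoid-type substitution costs $O(h^2)$ because $\mathcal{D}_\sigma(\cdot,u(\cdot))\in C^2$. For \ref{thm:lte-2}, with $u\in C^4$, the expansion holds with remainder $O(h^3)$, using evenness only up to that order. Replacing $\mathcal{D}(m)$ by $\mathcal{D}(t_n)$ costs $O(h)$, since $\mathcal{D}_\sigma(\cdot,u(\cdot))$ is $C^1$. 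For \ref{thm:lte-1}, with only $u\in C^3$, I use Taylor's theorem with Peano remainder. Both the remainder and the shift $\mathcal{D}(m)-\mathcal{D}(t_n)$ are then controlled by the modulus of continuity of $u'''$ (and of $u''$, $u'$) on $[t_0,T]$, which yields $\omega(h)h^2$.

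\textbf{Main obstacle: uniformity in $n$.} The scales $\sigma_n$ vary and may be large or small, so all constants must be uniform in $\sigma_n$. For this I will rely on hypothesis \ref{H2} ($\sigma_n^2+u'(t_n)^2\ge\eta$) together with the derivative bounds for $\mathcal{C}_\sigma$ in \cref{apx:C_sigma}. Concretely, the plan is as follows.
\begin{itemize}
\item I write the remainders through partial derivatives of $(\alpha,\beta)\mapsto\mathcal{C}_\sigma(\alpha,\beta)$ up to order two (parts \ref{thm:lte-1}, \ref{thm:lte-2}) or four (part \ref{thm:lte-3}).
\item These derivatives are rational in $\alpha,\beta$ and $\sqrt{\sigma^2+\alpha^2},\sqrt{\sigma^2+\beta^2}$. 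I expect each one to be bounded by a power of $M_1/\sqrt{\eta}$ times a factor homogeneous in $\sigma$ that cancels once combined with the powers of $u''$.
\item For $h$ small, $\sigma_n^2+u'(t)^2\ge\eta/2$ for all $t\in[t_n,t_{n+1}]$, since $u'$ is uniformly continuous. This keeps the bounds valid along the whole step.
\end{itemize}
If the $\sigma$-dependence does not cancel cleanly, I would fall back on the dichotomy $\sigma_n$ large versus $\sigma_n$ bounded. For large $\sigma_n$, $\mathcal{C}_{\sigma_n}$ approaches the arithmetic mean with explicit $O(\sigma^{-2})$ corrections, and for bounded $\sigma_n$ the appendix bounds apply directly.
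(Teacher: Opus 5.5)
Your argument is correct, and it takes a different route from the paper. The paper expands about the left endpoint. With $\alpha=u'(t_n)$ and $\delta_n=u'(t_{n+1})-u'(t_n)$, it uses the exact identity $\mathcal{C}_\sigma(\alpha,\alpha+\delta)=\alpha+\delta/(1+\sqrt{1+w})$, where $w=(2\alpha\delta+\delta^2)/(\sigma^2+\alpha^2)$ and $|w|\le\frac12$ (\cref{lem:uniform_diagonal_C_sigma}). This gives a cubic expansion in $\delta$ with remainder $\lambda(M_1,\eta)|\delta|^4$, uniform in $\sigma$ at no cost. The price comes in part~\ref{thm:lte-3}: the paper must compute the $h^3$ coefficient explicitly and recognize it as $-\frac{1}{24}\frac{d}{dt}\mathcal{D}_{\sigma_n}(t,u(t))$ at $t=t_n$. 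It then replaces $h\frac{d}{dt}\mathcal{D}$ by $\mathcal{D}(t_{n+1})-\mathcal{D}(t_n)+O(h^2)$.

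Your symmetric expansion about $m=t_n+h/2$ removes the odd terms by evenness. It gives the cleaner intermediate identity $\ell_{n+1}=-\frac{h^2}{12}\mathcal{D}_{\sigma_n}(m,u(m))+O(h^4)$, which agrees with the paper's $h^2$ and $h^3$ terms. All three parts then follow from this single expansion via the shift and trapezoid-type substitutions you describe. The angle form also explains the coefficient geometrically through $\theta_\sigma'=\|\gamma_\sigma'\|\kappa_\sigma$.

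What your route does not give for free is uniformity in $\sigma_n$, and there your proposal is still only an expectation. Uniformity does hold, but the difficulty is specific. The factor $\sigma\sec^2\theta_\sigma=(\sigma^2+u'^2)/\sigma$ blows up as $\sigma\searrow0$ and grows like $\sigma$ as $\sigma\to\infty$, so bounds on $\theta_\sigma^{(k)}$ alone are not enough. You need two facts:
\begin{itemize}
\item Each $\theta_\sigma^{(k)}$ with $k\ge1$ equals $\sigma$ times a sum of terms, each containing at least one factor $1/(\sigma^2+u'^2)$.
\item The second-order Lagrange remainder of $\sigma\tan$ is $(\sigma^2+y^2)y\,\sigma^{-2}(\bar\theta-\theta(m))^2$, with $y$ between $u'(m)$ and $\mathcal{C}_{\sigma}$, hence $|y|\le M_1$. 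Moreover, $(\bar\theta-\theta(m))^2\lesssim h^4\sigma^2/(\sigma^2+u'(m)^2)^2$.
\end{itemize}
Combined with $\sigma_n^2+u'(t)^2\ge\eta/2$ on the step, every constant then depends only on $M_1,\dots,M_5$ and $\eta$.

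So your fallback dichotomy is unnecessary, and as stated it would not suffice anyway. ``Bounded $\sigma_n$'' includes $\sigma_n\searrow0$, which \ref{H2} permits when $|u'(t_n)|$ is bounded below. In that regime a fixed-$\sigma$ estimate such as \cref{lem:Taylor_C_sigma}, whose constant depends on $\sigma$, gives no uniform control.
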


\begin{proof}
    We apply \cref{lem:uniform_diagonal_C_sigma}.
    For each $n$, define 
    \begin{displaymath}
        \delta_n
        :=
        u'(t_{n+1}) - u'(t_n).
    \end{displaymath}
    For $K=[-M_1,M_1]$, the radius defined in
    \eqref{def:uniform_diagonal_radius} is
    \begin{displaymath}
        r
        =
        \left( M_1^2+\frac{\eta}{2} \right)^{\frac{1}{2}} - M_1
        >
        0.
    \end{displaymath}
    Since 
    \begin{displaymath}
        \left|\delta_n\right| 
        \leq
        \int_{t_n}^{t_{n+1}}\left|u^{\prime \prime}(s)\right| \, d s 
        \leq 
        M_2\left(t_{n+1}-t_n\right)
        =
        M_2 h,
    \end{displaymath}
    we may, if necessary, decrease $H$ so that $|\delta_n|\leq r$ whenever $0<h\leq H$ and $t_{n+1}\leq T$.
    Moreover,
    \begin{displaymath}
        |u'(t_n)|\leq M_1
        \qquad\text{and}\qquad
        \sigma_n^2+u'(t_n)^2\geq\eta.
    \end{displaymath}
    Therefore, \cref{lem:uniform_diagonal_C_sigma} applies with $K=[-M_1, M_1]$, $\sigma=\sigma_n$, $\alpha=u'(t_n)$, and $\delta=\delta_n$.
    Hence, there exists a constant $\lambda\equiv\lambda(M_1,\eta)>0$ such that
    \begin{align*}
        & \, \mathcal{C}_{\sigma_n} \left( u'(t_{n+1}), u'(t_n) \right) \\
        =& \, \mathcal{C}_{\sigma_n} \left( u'(t_n), u'(t_n)+\delta_n \right)\\
        =&
        \, u'(t_n) 
        + \frac{1}{2}\delta_n 
        - \frac{u'(t_n)} {4\left(\sigma_n^2+u'(t_n)^2\right)} \delta_n^2
        + \frac{u'(t_n)^2-\sigma_n^2} {8\left(\sigma_n^2+u'(t_n)^2\right)^2} \delta_n^3
        + R_{\sigma_n}\bigl(u'(t_n),\delta_n\bigr),
    \end{align*}
    where
    \begin{displaymath}
        \left|
            R_{\sigma_n}\bigl(u'(t_n),\delta_n\bigr)
        \right|
        \leq
        \lambda|\delta_n|^4
    \end{displaymath}
    uniformly in $n$ and $h$.
    Substituting this equality into \eqref{def:lte} gives
    \begin{equation}    \label{eq:lte_expanded}
        \begin{aligned}
            \ell_{n+1}
            =& \, \frac{u(t_{n+1})-u(t_n)}{h} - u'(t_n) - \frac{1}{2}\delta_n + \frac{u'(t_n)} {4\left(\sigma_n^2+u'(t_n)^2\right)} \delta_n^2 \\
            & - \frac{u'(t_n)^2-\sigma_n^2} {8\left(\sigma_n^2+u'(t_n)^2\right)^2} \delta_n^3 - R_{\sigma_n}\bigl(u'(t_n),\delta_n\bigr).
        \end{aligned}
    \end{equation}
    
    Suppose first that $u\in C^3([t_0,T])$.
    Taylor's theorem gives, uniformly in $n$,
    \begin{align*}
        \frac{u(t_{n+1})-u(t_n)}{h}
        &=
        u'(t_n)
        +
        \frac{h}{2}u''(t_n)
        +
        \frac{h^2}{6}u'''(t_n)
        +
        o(h^2),\\
        \delta_n
        &=
        hu''(t_n)
        +
        \frac{h^2}{2}u'''(t_n)
        +
        o(h^2).
    \end{align*}
    In particular,
    \begin{displaymath}
        \delta_n^2
        =
        h^2u''(t_n)^2+o(h^2)
        \qquad\text{and}\qquad
        \delta_n^3
        =
        O(h^3)
    \end{displaymath}
    uniformly in $n$.
    Moreover, hypothesis~\ref{H2} gives
    \begin{displaymath}
        \left| \frac{u'(t_n)}{4\left(\sigma_n^2+u'(t_n)^2\right)}\right|
        \leq
        \frac{M_1}{4\eta} 
        \qquad\text{and}\qquad
        \left| \frac{u'(t_n)^2-\sigma_n^2}{8\left(\sigma_n^2+u'(t_n)^2\right)^2} \right|
        \leq
        \frac{1}{8\eta}.
    \end{displaymath}
    Also,
    \begin{displaymath}
        \left|
            R_{\sigma_n}\bigl(u'(t_n),\delta_n\bigr)
        \right|
        \leq
        \lambda M_2^4h^4.
    \end{displaymath}
    Substituting these expansions and estimates into \eqref{eq:lte_expanded}, we obtain
    \begin{displaymath}
        \ell_{n+1}
        =
        - \frac{h^2}{12}u'''(t_n)
        + \frac{h^2u'(t_n)u''(t_n)^2}{4\left(\sigma_n^2+u'(t_n)^2\right)} + o(h^2)
        =
        -\frac{h^2}{12} \mathcal{D}_{\sigma_n}\bigl(t_n,u(t_n)\bigr) + o(h^2)
    \end{displaymath}
    uniformly in $n$.
    Together with \eqref{eq:D_on_exact_sol} and \eqref{eq:curvature_defect_relation-2}, the preceding expansion proves \cref{thm:lte-1}.

    If $u\in C^4([t_0,T])$, then the $o(h^2)$ remainders in the preceding expansions of the difference quotient and $\delta_n$ can both be replaced by $O(h^3)$, uniformly in $n$.
    Hence,
    \begin{displaymath}
        \delta_n^2
        =
        h^2u''(t_n)^2+O(h^3),
        \qquad
        \delta_n^3
        =
        O(h^3),
        \qquad
        R_{\sigma_n}\bigl(u'(t_n),\delta_n\bigr)
        =
        O(h^4)
    \end{displaymath}
    uniformly in $n$.
    Substitution into \eqref{eq:lte_expanded}, together with \eqref{eq:D_on_exact_sol}, gives
    \begin{displaymath}
        \ell_{n+1}
        =
        -\frac{h^2}{12} \mathcal{D}_{\sigma_n}\bigl(t_n,u(t_n)\bigr)
        + O(h^3)
    \end{displaymath}
    uniformly in $n$.
    The uniformity of the $O(h^3)$ remainder, together with \eqref{eq:curvature_defect_relation-2}, proves \cref{thm:lte-2}.

    Finally, suppose that $u\in C^5([t_0,T])$.
    Taylor's theorem gives, uniformly in $n$,
    \begin{align*}
        \frac{u(t_{n+1})-u(t_n)}{h}
        &=
        u'(t_n)
        +
        \frac{h}{2}u''(t_n)
        +
        \frac{h^2}{6}u'''(t_n)
        +
        \frac{h^3}{24}u^{(4)}(t_n)
        +
        O(h^4),\\
        \delta_n
        &=
        hu''(t_n)
        +
        \frac{h^2}{2}u'''(t_n)
        +
        \frac{h^3}{6}u^{(4)}(t_n)
        +
        O(h^4).
    \end{align*}
    It follows that
    \begin{align*}
        \delta_n^2
        &=
        h^2u''(t_n)^2
        +
        h^3u''(t_n)u'''(t_n)
        +
        O(h^4),\\
        \delta_n^3
        &=
        h^3u''(t_n)^3
        +
        O(h^4),
    \end{align*}
    uniformly in $n$.
    Substituting these expansions into
    \eqref{eq:lte_expanded}, we obtain
    \begin{align*}
        \ell_{n+1}
        =&
        -\frac{h^2}{12} \mathcal{D}_{\sigma_n}\bigl(t_n, u(t_n)\bigr) \\
        & \,
        - \frac{h^3}{24} \left( u^{(4)}(t_n) 
        - \frac{ 6u'(t_n)u''(t_n)u'''(t_n) }{ \sigma_n^2 + u'(t_n)^2 } + \frac{ 3\left(u'(t_n)^2 - \sigma_n^2\right)u''(t_n)^3 }{ \left(\sigma_n^2 + u'(t_n)^2\right)^2 } \right)
        + O(h^4)
    \end{align*}
    uniformly in $n$.
    Differentiating \eqref{eq:D_on_exact_sol} with respect to $t$ for $\sigma=\sigma_n$ gives
    \begin{displaymath}
        \frac{d}{dt} \mathcal{D}_{\sigma_n} (t, u(t)) 
        =
        u^{(4)}(t) 
        - \frac{6u'(t)u''(t)u'''(t)}{\sigma_n^2+u'(t)^2} 
        + \frac{3\left(u'(t)^2-\sigma_n^2\right)u''(t)^3}{\left(\sigma_n^2+u'(t)^2\right)^2}.
    \end{displaymath}
    Therefore,
    \begin{equation}
        \label{eq:lte-2}
        \ell_{n+1}
        =
        - \frac{h^2}{12} \mathcal{D}_{\sigma_n}\bigl(t_n, u(t_n)\bigr)
        - \frac{h^3}{24} \left. \frac{d}{dt} \mathcal{D}_{\sigma_n} (t, u(t))  \right|_{t = t_n} 
        + O(h^4)
    \end{equation}
    uniformly in $n$.

    We claim that 
    \begin{equation}   \label{eq:lte-3}
        h \left. \frac{d}{dt} \mathcal{D}_{\sigma_n} (t, u(t))  \right|_{t = t_n} 
        = 
        - \mathcal{D}_{\sigma_n}(t_n, u(t_n)) 
        + \mathcal{D}_{\sigma_n}(t_{n+1}, u(t_{n+1}))
        + O(h^2)
    \end{equation}    
    uniformly in $n$.
    For every $s\in[t_n, t_{n+1}]$,
    \begin{align*}
        \sigma_n^2+u'(s)^2
        &\geq
        \sigma_n^2+u'(t_n)^2 - \left|u'(s)^2-u'(t_n)^2\right|
        \\
        &\geq
        \eta - |u'(s) - u'(t_n)| |u'(s) + u'(t_n)|
        \\
        &\geq
        \eta - 2 M_1 M_2 h.
    \end{align*}
    Hence, for all sufficiently small $h>0$,
    \begin{equation}
        \label{eq:stepwise_nondegeneracy}
        \sigma_n^2 + u'(s)^2
        \geq
        \frac{\eta}{2},
        \qquad
        s\in[t_n, t_{n+1}].
    \end{equation}
    By \eqref{eq:stepwise_nondegeneracy},
    \begin{displaymath}
        \frac{1}{\sigma_n^2+u'(s)^2}
        \leq
        \frac{2}{\eta} 
        \qquad\text{and}\qquad
        0
        \leq
        \frac{\sigma_n^2}{\sigma_n^2+u'(s)^2}
        \leq
        1,
        \qquad
        s\in[t_n,t_{n+1}].
    \end{displaymath}
    Since $u\in C^5([t_0,T])$, differentiating \eqref{eq:D_on_exact_sol} twice and using these bounds shows that the mapping $t\mapsto\mathcal{D}_{\sigma_n}\bigl(t,u(t)\bigr)$ has a uniformly bounded second derivative on $[t_n, t_{n+1}]$.
    Taylor's theorem therefore gives \eqref{eq:lte-3}.

    Combining \eqref{eq:lte-2} with \eqref{eq:lte-3} and applying \eqref{eq:curvature_defect_relation-2}, we obtain
    \begin{displaymath}
        \ell_{n+1}
        =
        -\frac{h^2}{24} \sigma_n \left[  \left\lVert \gamma_{\sigma_n}'(t_n) \right\rVert^3 \kappa_{\sigma_n}'(t_n)  + \left\lVert \gamma_{\sigma_n}'(t_{n+1}) \right\rVert^3 \kappa_{\sigma_n}'(t_{n+1}) \right]
        + O(h^4)
    \end{displaymath}
    uniformly in $n$.
    The uniformity of the $O(h^4)$ remainder proves \cref{thm:lte-3}.
\end{proof}

\begin{corollary}[Consistency]
    \label{cor:consistency_of_SE}
    Suppose that hypothesis~\ref{H2} holds and that $F\in C^2(\Omega)$.
    Then the following statements hold.
    \begin{enumerate}[label=\upshape(\alph*)]
        \item \label{cor:consistency_of_SE-2nd}   
        Suppose that $u\in C^3([t_0,T])$.
        Then SE is consistent of order~$2$.
        More precisely, there exist constants $h_1 \in (0, H]$ and $C>0$, independent of $h$ and $n$, such that
        \begin{displaymath}
            |\ell_{n+1}|
            \leq
            Ch^2
        \end{displaymath}
        whenever $0<h\leq h_1$ and $t_{n+1}\leq T$.

        \item \label{cor:consistency_of_SE-3rd} 
        Suppose that $u\in C^4([t_0,T])$ and that the positive scales $\sigma_n$ are chosen such that%
        \begin{equation}
            \label{eq:sol_graph_3rd}
            \kappa_{\sigma_n}'(t_n)=0
        \end{equation}
        for every $n$ such that $t_{n+1}\leq T$.
        Then SE is consistent of order~$3$.
        More precisely, there exist constants $h_2 \in (0, H]$ and $C>0$, independent of $h$ and $n$, such that
        \begin{displaymath}
            |\ell_{n+1}|
            \leq
            Ch^3
        \end{displaymath}
        whenever $0<h\leq h_2$ and $t_{n+1}\leq T$.

        \item \label{cor:consistency_of_SE-4th}
        Suppose that $u\in C^5([t_0,T])$ and that the positive scales $\sigma_n$ are chosen such that%
        \begin{equation} \label{eq:sol_graph_4th}
            \left\lVert\gamma_{\sigma_n}'(t_n)\right\rVert^3 \kappa_{\sigma_n}'(t_n) + \left\lVert\gamma_{\sigma_n}'(t_{n+1})\right\rVert^3
            \kappa_{\sigma_n}'(t_{n+1})
            =
            0
        \end{equation}
        for every $n$ such that $t_{n+1}\leq T$.
        Then SE is consistent of order~$4$.
        More precisely, there exist constants $h_3 \in (0, H]$ and $C>0$, independent of $h$ and $n$, such that
        \begin{displaymath}
            |\ell_{n+1}|
            \leq
            Ch^4
        \end{displaymath}
        whenever $0<h\leq h_3$ and $t_{n+1}\leq T$.
    \end{enumerate}
\end{corollary}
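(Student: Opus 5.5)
The corollary follows from \cref{thm:lte} once the leading coefficient is bounded uniformly in $n$ and $h$, so the plan treats the three parts in parallel. By \eqref{eq:curvature_defect_relation-2} and \eqref{eq:D_on_exact_sol}, the leading term in \eqref{eq:lte-2nd} equals
\begin{displaymath}
    -\frac{h^2}{12}\mathcal{D}_{\sigma_n}\bigl(t_n,u(t_n)\bigr)
    =
    -\frac{h^2}{12}\left( u'''(t_n)-\frac{3u'(t_n)u''(t_n)^2}{\sigma_n^2+u'(t_n)^2}\right).
\end{displaymath}
Working with this form, rather than with $\sigma_n\|\gamma_{\sigma_n}'\|^3\kappa_{\sigma_n}'$, avoids any dependence on the size of $\sigma_n$. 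Hypothesis~\ref{H2} gives $\sigma_n^2+u'(t_n)^2\ge\eta$, so this coefficient is bounded by $M_3+3M_1M_2^2/\eta$, uniformly in $n$ and $h$. This uniform bound is the only point that really needs care, because the scales $\sigma_n$ are otherwise arbitrary and could be very large or very small.

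For part~\ref{cor:consistency_of_SE-2nd}, apply \cref{thm:lte}\ref{thm:lte-1}. It gives $|R_{n+1}(h)|\le\omega(h)h^2$ with $\omega(h)\to0$. Choose $h_1\in(0,H]$ so that $\omega(h)\le1$ on $(0,h_1]$; this requires replacing $\omega$ by its supremum on small intervals, or simply using the smallness of $\omega$ near $0$. This yields $|\ell_{n+1}|\le\bigl((M_3+3M_1M_2^2/\eta)/12+1\bigr)h^2$.

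For part~\ref{cor:consistency_of_SE-3rd}, the condition \eqref{eq:sol_graph_3rd} makes the leading term in \eqref{eq:lte-3rd} vanish. Part~\ref{thm:lte-2} then gives $|\ell_{n+1}|=|R_{n+1}(h)|\le Ch^3$ for all sufficiently small $h$. Call the threshold $h_2$, taking it no larger than $H$.

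For part~\ref{cor:consistency_of_SE-4th}, the condition \eqref{eq:sol_graph_4th} kills the bracketed term in \cref{thm:lte}\ref{thm:lte-3}. That result then gives $|\ell_{n+1}|\le Ch^4$ for $h\le h_3$.

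In each part the constants come directly from \cref{thm:lte}, whose remainders are already uniform in $n$. The main obstacle is therefore only the uniform boundedness of the leading coefficient in part~\ref{cor:consistency_of_SE-2nd}, which \ref{H2} settles as shown above.
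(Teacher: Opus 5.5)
Your proposal is correct and follows the paper's proof essentially step for step. You bound the leading coefficient $u'''(t_n)-3u'(t_n)u''(t_n)^2/(\sigma_n^2+u'(t_n)^2)$ by $M_3+3M_1M_2^2/\eta$ via (H2), choose $h_1$ with $\omega(h)\le 1$ to obtain the constant $M_3/12+M_1M_2^2/(4\eta)+1$, and note that the scale conditions annihilate the leading terms in parts (b) and (c) of the local truncation error theorem. Your aside about replacing $\omega$ by a supremum is unnecessary, since $\omega(h)\to 0$ directly yields such an $h_1$.
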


\begin{proof}
    For \cref{cor:consistency_of_SE-2nd}, \eqref{eq:normalized_curvature_derivative} and hypothesis~\ref{H2} give
    \begin{displaymath}
        \left| \sigma_n \left\lVert \gamma_{\sigma_n}'(t_n) \right\rVert^3 \kappa_{\sigma_n}'(t_n) \right|
        =
        \left| u'''(t_n) - \frac{ 3u'(t_n)u''(t_n)^2 }{ \sigma_n^2+u'(t_n)^2 } \right|
        \leq
        M_3 + \frac{3M_1M_2^2}{\eta}.
    \end{displaymath}
    Since $\omega(h)\to0$ as $h\searrow0$, there exists $h_1 \in (0, H]$ such that $\omega(h)\leq1$ whenever $0 < h \leq h_1$.
    Hence, \cref{thm:lte}~\ref{thm:lte-1} gives
    \begin{displaymath}
        |\ell_{n+1}|
        \leq
        \left( \frac{M_3}{12} + \frac{M_1M_2^2}{4\eta} + 1 \right) h^2.
    \end{displaymath}
    This proves \cref{cor:consistency_of_SE-2nd}.

    Under the condition in \cref{cor:consistency_of_SE-3rd}, the leading term in \cref{thm:lte}~\ref{thm:lte-2} vanishes.
    Therefore,
    \begin{displaymath}
        |\ell_{n+1}|
        \leq
        Ch^3
    \end{displaymath}
    for all sufficiently small $h>0$.
    This proves \cref{cor:consistency_of_SE-3rd}.
    Similarly, \eqref{eq:sol_graph_4th} and \cref{thm:lte}~\ref{thm:lte-3} imply \cref{cor:consistency_of_SE-4th}.
\end{proof}

For comparison, the local truncation error of CN is defined by
\begin{displaymath}
    \ell_{n+1}^{\mathrm{CN}}
    :=
    \frac{u(t_{n+1})-u(t_n)}{h} - \frac{u'(t_{n+1})+u'(t_n)}{2}.
\end{displaymath}
If $u\in C^4([t_0,T])$, Taylor's theorem gives
\begin{equation}    \label{eq:local_truncation_error_CN}
    \ell_{n+1}^{\mathrm{CN}}
    =
    -\frac{h^2}{12} u'''(t_n)
    + O(h^3)
\end{equation}
uniformly in $n$.
The leading $O(h^2)$ terms in the two local truncation errors vanish under different conditions.
For CN, the condition is $u'''(t_n) = 0$; for SE, it is $\kappa_{\sigma_n}'(t_n) = 0$.
This local comparison does not imply a global error inequality or an efficiency advantage.

\subsection{Convergence}

The following theorem converts a uniform local truncation error bound into a global convergence estimate for SE.

\begin{theorem}
    \label{thm:convergence_of_SE}
    Suppose that hypothesis~\ref{H1} holds.
    Suppose additionally that there exist $p\geq2$, $C_{\ell}>0$, and $h_{\ell}>0$ such that, for every $h\in(0,h_{\ell}]$, a sequence $\{\sigma_n\}_{n=0}^{N_h-1}$ of positive scales is given and the corresponding local truncation errors satisfy
    \begin{equation} \label{ineq:convergence_SE-1}
        \max_{0\leq n\leq N_h-1}|\ell_{n+1}|
        \leq
        C_{\ell}h^p.
    \end{equation}
    Then there exists a constant $H\equiv H(t_0, T, \rho, L, p, C_{\ell}, h_{\ell})>0$ such that, for every $h\in(0,H]$, there exists a unique sequence $\{u_n\}_{n=0}^{N_h}$ satisfying \eqref{mth:SE} with $\{(t_n,u_n)\}_{n=0}^{N_h}\subseteq U_\rho(u)$, and the estimate
    \begin{equation}
        \label{ineq:convergence_SE-2}
        \max_{0\leq n\leq N_h}|u(t_n)-u_n|
        \leq
        2C_{\ell}(T-t_0) \exp\left(8L(T-t_0)\right)h^p.
    \end{equation}
\end{theorem}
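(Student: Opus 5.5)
The plan is to run the classical contraction-plus-discrete-Gronwall argument for implicit one-step methods. I will do it inductively in $n$, so that existence, uniqueness and the error bound are obtained together while staying inside the tube $U_\rho(u)$.

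\textbf{Step 1: a scale-free Lipschitz bound for $\mathcal{C}_\sigma$.} I will use the fact that $\mathcal{C}_\sigma$ is Lipschitz in each argument with constant $2$, independently of $\sigma$. This should be available from \cref{apx:C_sigma}; if not, it follows from the representation $\mathcal{C}_\sigma(\alpha,\beta)=\sigma\tan\bigl(\tfrac12(\theta_\alpha+\theta_\beta)\bigr)$ with $\theta_\alpha=\arctan(\alpha/\sigma)$. Then
\begin{displaymath}
    \partial_\alpha\mathcal{C}_\sigma(\alpha,\beta)
    =
    \frac{1}{2}\,\frac{\cos^2\theta_\alpha}{\cos^2\bigl(\tfrac12(\theta_\alpha+\theta_\beta)\bigr)}.
\end{displaymath}
Concavity of $\cos$ on $(-\pi/2,\pi/2)$ gives $\cos\bigl(\tfrac12(\theta_\alpha+\theta_\beta)\bigr)\geq\tfrac12\cos\theta_\alpha$. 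Hence $0<\partial_\alpha\mathcal{C}_\sigma\leq2$, and the same bound holds for $\partial_\beta\mathcal{C}_\sigma$ by symmetry. Combined with \ref{H1}, for points in $U_\rho(u)$ this yields
\begin{displaymath}
    \bigl|\mathcal{C}_{\sigma}(F(t,x),F(s,y))-\mathcal{C}_{\sigma}(F(t,x'),F(s,y'))\bigr|
    \leq
    2L|x-x'|+2L|y-y'|.
\end{displaymath}

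\textbf{Step 2: solvability of each step by contraction.} Choose $H\le h_\ell$ so that $2HL\leq\tfrac12$, i.e.\ $hL\leq\tfrac14$. Also require $E(h):=2C_\ell(T-t_0)e^{8L(T-t_0)}h^p\leq\rho/4$ and $2C_\ell h^{p+1}\le\rho/4$ for $h\le H$. Assume inductively that $|e_n|\leq E(h)$, where $e_n=u(t_n)-u_n$. Consider
\begin{displaymath}
    \Phi(v)=u_n+h\,\mathcal{C}_{\sigma_n}\bigl(F(t_{n+1},v),F(t_n,u_n)\bigr)
\end{displaymath}
on $I=[u(t_{n+1})-\rho,u(t_{n+1})+\rho]$. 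By Step 1, $\Phi$ is a contraction with constant $2hL\leq\tfrac12$. Writing $u(t_{n+1})=u(t_n)+h\mathcal{C}_{\sigma_n}(\cdots)+h\ell_{n+1}$, I get for $v\in I$
\begin{displaymath}
    |\Phi(v)-u(t_{n+1})|\le (1+2hL)|e_n|+2hL|v-u(t_{n+1})|+h|\ell_{n+1}|.
\end{displaymath}
The right-hand side is at most $\tfrac{3}{2}\cdot\tfrac{\rho}{4}+\tfrac{\rho}{2}+\tfrac{\rho}{4}\le\rho$. So $\Phi(I)\subset I$, and Banach's theorem gives a unique $u_{n+1}\in I$, i.e.\ a unique admissible update in the tube.

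\textbf{Step 3: the error recursion.} Subtracting the scheme from the identity defining $\ell_{n+1}$ and using Step 1 gives
\begin{displaymath}
    (1-2hL)|e_{n+1}|\leq(1+2hL)|e_n|+h|\ell_{n+1}|.
\end{displaymath}
Since $hL\le\tfrac14$, we have $(1-2hL)^{-1}\le2$ and $\tfrac{1+2hL}{1-2hL}\le1+8hL\le e^{8hL}$. Therefore
\begin{displaymath}
    |e_{n+1}|\le e^{8hL}|e_n|+2C_\ell h^{p+1}.
\end{displaymath}
Unrolling from $e_0=0$ over at most $N_h\le (T-t_0)/h$ steps yields $|e_{n+1}|\le 2C_\ell (T-t_0)e^{8L(T-t_0)}h^p=E(h)$. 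This closes the induction and proves \eqref{ineq:convergence_SE-2}.

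\textbf{Main obstacle.} The delicate point is Step 1. The bound must be uniform in $\sigma_n$, since no control on the scales is assumed here, and it must be exactly $2$ to reproduce the constant $8L$. The second source of care is keeping the induction consistent: the smallness choice of $H$ has to ensure simultaneously that the contraction maps the tube into itself and that the a priori error bound keeps $u_n$ in $U_\rho(u)$.
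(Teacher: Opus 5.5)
Your proposal is correct and is essentially the paper's argument. The paper applies its general \cref{lem:lte_to_global_error_SE} with $Q=0$, $\Lambda=2L$, and $\Sigma_n\equiv\sigma_n$, which runs the same tube-preserving contraction, the same $\sigma$-uniform Lipschitz constant $2$ from \cref{lem:Lipschitz_C_sigma}, and the same discrete accumulation with $K=4L$ that produces $\exp(8L(T-t_0))$. There is one small arithmetic slip in Step~2: the displayed sum $\frac{3}{2}\cdot\frac{\rho}{4}+\frac{\rho}{2}+\frac{\rho}{4}$ equals $\frac{9}{8}\rho$, not something $\leq\rho$. However, your own constraint $2C_\ell h^{p+1}\leq\rho/4$ actually gives $h|\ell_{n+1}|\leq\rho/8$, so the total is exactly $\rho$ and $\Phi(I)\subset I$ still holds.
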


\begin{proof}
    For every $h\in(0,h_\ell]$ and $n=0,1,\ldots,N_h-1$, define $\Sigma_n(t,x;s,y) := \sigma_n$.
    We apply \cref{lem:lte_to_global_error_SE} with $P=C_\ell$, $\varepsilon=h_\ell$, $Q=0$, and $\Lambda=2L$.
    Then \cref{lem:Lipschitz_C_sigma} and hypothesis~\ref{H1} give \eqref{ineq:lte_to_global_error_SE-0}, while \eqref{ineq:convergence_SE-1} gives \eqref{ineq:lte_to_global_error_SE-1}.
    Therefore, \cref{lem:lte_to_global_error_SE} gives the desired sequence and \eqref{ineq:convergence_SE-2}.
\end{proof}

\begin{corollary}[Convergence]
    \label{cor:convergence_orders_SE}
    Suppose that hypotheses~\ref{H1} and~\ref{H2} hold and that $F\in C^2(\Omega)$.
    Then the following statements hold.
    \begin{enumerate}[label=\upshape(\alph*)]
        \item
        \label{cor:convergence_orders_SE-2nd}
        If $u\in C^3([t_0,T])$, then SE has convergence order~$2$.

        \item
        \label{cor:convergence_orders_SE-3rd}
        If $u\in C^4([t_0,T])$ and
        \begin{displaymath}
            \kappa_{\sigma_n}'(t_n)=0
        \end{displaymath}
        whenever $t_{n+1}\leq T$, then SE has convergence order~$3$.

        \item
        \label{cor:convergence_orders_SE-4th}
        If $u\in C^5([t_0,T])$ and
        \begin{displaymath}
            \left\lVert\gamma_{\sigma_n}'(t_n)\right\rVert^3 \kappa_{\sigma_n}'(t_n)
            +
            \left\lVert\gamma_{\sigma_n}'(t_{n+1})\right\rVert^3 \kappa_{\sigma_n}'(t_{n+1})
            =
            0
        \end{displaymath}
        whenever $t_{n+1}\leq T$, then SE has convergence order~$4$.
    \end{enumerate}
\end{corollary}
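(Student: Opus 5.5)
The corollary should follow by combining the local truncation error bounds of \cref{cor:consistency_of_SE} with the abstract global estimate of \cref{thm:convergence_of_SE}. For each part, the plan is to check the hypotheses of \cref{thm:convergence_of_SE} with $p=2,3,4$ respectively, and then read off the global bound \eqref{ineq:convergence_SE-2}.

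\textbf{Step 1.} Under~\ref{H2} and $F\in C^2(\Omega)$, apply the matching part of \cref{cor:consistency_of_SE}.
\begin{itemize}
\item If $u\in C^3([t_0,T])$, part~\ref{cor:consistency_of_SE-2nd} gives $h_1\in(0,H]$ and $C>0$ such that $|\ell_{n+1}|\le Ch^2$ for $0<h\le h_1$ and every $n$ with $t_{n+1}\le T$.
\item If additionally $u\in C^4$ and $\kappa_{\sigma_n}'(t_n)=0$, part~\ref{cor:consistency_of_SE-3rd} gives $|\ell_{n+1}|\le Ch^3$ for $0<h\le h_2$.
\item If $u\in C^5$ and condition \eqref{eq:sol_graph_4th} holds, part~\ref{cor:consistency_of_SE-4th} gives $|\ell_{n+1}|\le Ch^4$ for $0<h\le h_3$.
\end{itemize}
For $n\le N_h-1$ we have $t_{n+1}=t_0+(n+1)h\le t_0+N_hh\le T$. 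Hence these bounds hold for all indices in $\max_{0\le n\le N_h-1}$, which is exactly \eqref{ineq:convergence_SE-1} with $C_\ell=C$, $h_\ell=h_j$, and $p=j+1$ for $j=1,2,3$.

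\textbf{Step 2.} Hypothesis~\ref{H1} is assumed, and $p\ge2$ holds in every case. So \cref{thm:convergence_of_SE} yields $H'>0$ such that, for $h\in(0,H']$, the numerical sequence exists uniquely in $U_\rho(u)$ and satisfies
\begin{displaymath}
\max_{0\le n\le N_h}|u(t_n)-u_n|\le 2C_\ell(T-t_0)e^{8L(T-t_0)}h^p.
\end{displaymath}
This is convergence of order $p$ in the standard sense.

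\textbf{Main obstacle.} The analysis is already contained in the cited results, so the only delicate point is bookkeeping. Specifically, we must confirm that the scale sequence $\{\sigma_n\}$ in~\ref{H2} is the same one used in the local error bounds, and that the constants $C$ in \cref{cor:consistency_of_SE} are uniform in $h$ and $n$. The corollary asserts both explicitly, which is what \cref{thm:convergence_of_SE} requires.

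We also need to check that the step-size threshold $H$ coming from~\ref{H2} is compatible with the one produced by \cref{thm:convergence_of_SE}. To handle this, we take the minimum of $h_j$ and the threshold $H'$ from \cref{thm:convergence_of_SE}, so that both results apply simultaneously for all sufficiently small $h$.
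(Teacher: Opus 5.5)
Your proposal is correct and follows the paper's proof: it applies the matching part of \cref{cor:consistency_of_SE} to obtain \eqref{ineq:convergence_SE-1}, then invokes \cref{thm:convergence_of_SE} with $p=2,3,4$. Your check that $t_{n+1}\le T$ for every $n\le N_h-1$ correctly covers the one bookkeeping detail the paper leaves implicit.
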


\begin{proof}
    If $u\in C^3([t_0,T])$, then \cref{cor:consistency_of_SE}~\ref{cor:consistency_of_SE-2nd} and \cref{thm:convergence_of_SE}, applied with $p=2$, imply \cref{cor:convergence_orders_SE-2nd}.
    The remaining parts follow similarly, with $p=3$ and $p=4$, respectively.
\end{proof}

\subsection{Stability on the negative real axis}

Consider the test equation
\begin{equation}
    \label{ODE:Dahlquist}
    \begin{cases}
        u'=\lambda u, & t>0,\\
        u(0)=u_0,
    \end{cases}
\end{equation}
where $\lambda,u_0\in\mathbb{R}$.

Since $\mathcal{C}_{\sigma}$ is not homogeneous, applying SE to the test equation does not in general produce a stability function depending only on $h\lambda$.
Nevertheless, SE satisfies an unconditional decay property on the negative real test equation.

\begin{theorem} \label{thm:stability_SE}
    Consider \eqref{ODE:Dahlquist} with $\lambda<0$, and let $h>0$.
    Let $\{\sigma_n\}_{n=0}^{\infty}$ be an arbitrary sequence of positive scales.
    Then \eqref{eq:SE_implicit_step} has a unique solution $v_n^{\ast}\in\mathbb{R}$ for every $n\in\mathbb{N}\cup\{0\}$.
    Setting $u_{n+1} := v_n^{\ast}$, we have
    \begin{equation}    \label{ineq:stability_SE-1}
        |u_{n+1}|
        \leq
        |u_n|.
    \end{equation}
    The inequality is strict whenever $u_n\neq0$.
    Moreover, for every such choice of positive scales,
    \begin{equation}    \label{eq:stability_SE-2}
        \lim_{n\to\infty}u_n=0.
    \end{equation}
\end{theorem}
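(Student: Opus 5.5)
The plan has three parts: existence and uniqueness of each implicit step, the decay inequality, and convergence to zero.

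\emph{Existence and uniqueness.} Fix $n$ and $u_n$, and write $\beta:=\lambda u_n$ and $\sigma:=\sigma_n$. Define
\begin{displaymath}
    \Phi(v):=v-u_n-h\,\mathcal{C}_{\sigma}(\lambda v,\beta),
    \qquad v\in\mathbb{R}.
\end{displaymath}
By \cref{def:scaled_angular_mean} and the monotonicity of $\mathcal{C}$ in its first variable \cite[Lem.~A.1]{2026a_Jung}, $\mathcal{C}_\sigma(\cdot,\beta)$ is strictly increasing and continuous. Since $\lambda<0$, the map $v\mapsto\mathcal{C}_\sigma(\lambda v,\beta)$ is nonincreasing, so $\Phi$ is continuous and strictly increasing. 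The representation \eqref{eq:repr_B}, scaled by $\sigma$, gives $\mathcal{C}_\sigma(\alpha,\beta)=\sigma\tan\bigl(\tfrac12\arctan(\alpha/\sigma)+\tfrac12\arctan(\beta/\sigma)\bigr)$. The angle here lies in $(-\pi/2,\pi/2)$, and $\mathcal{C}_\sigma(\cdot,\beta)$ is bounded on $\mathbb{R}$ for fixed $\beta$, because the half-angle sum stays in a compact subinterval of $(-\pi/2,\pi/2)$. Hence $\Phi(v)\to\pm\infty$ as $v\to\pm\infty$, and $\Phi$ has exactly one zero $v_n^\ast$.

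\emph{Decay.} If $u_n=0$, then $\Phi(0)=-h\,\mathcal{C}_\sigma(0,0)=0$, so $u_{n+1}=0$. Now suppose $u_n>0$; the case $u_n<0$ follows by the odd symmetry $\mathcal{C}_\sigma(-\alpha,-\beta)=-\mathcal{C}_\sigma(\alpha,\beta)$, which is visible from the $\tan/\arctan$ form. It suffices to show $\Phi(-u_n)<0<\Phi(u_n)$, since monotonicity then places $v_n^\ast$ in $(-u_n,u_n)$.
\begin{itemize}
    \item At $v=u_n$: both arguments of $\mathcal{C}_\sigma$ equal $\lambda u_n<0$, so $\mathcal{C}_\sigma=\lambda u_n<0$ and $\Phi(u_n)=-h\lambda u_n>0$.
    \item At $v=-u_n$: the arguments are $-\lambda u_n>0$ and $\lambda u_n<0$, which are opposite. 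The half-angle sum is $0$, so $\mathcal{C}_\sigma=0$ and $\Phi(-u_n)=-2u_n<0$.
\end{itemize}
This gives the strict inequality $|u_{n+1}|<|u_n|$ whenever $u_n\neq0$.

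\emph{Convergence to zero.} This is the main obstacle, because the scales $\sigma_n$ are arbitrary and the contraction factor need not be uniform. By the decay step, $|u_n|$ is nonincreasing with some limit $m\ge0$. Suppose $m>0$, and take $u_n>0$ without loss of generality. Then $0<u_{n+1}$ gives $\mathcal{C}_\sigma(\lambda u_{n+1},\lambda u_n)\le0$, hence
\begin{displaymath}
    u_n-u_{n+1}=-h\,\mathcal{C}_\sigma(\lambda u_{n+1},\lambda u_n)\geq0 .
\end{displaymath}
To get a quantitative lower bound, I would use the $\mathcal{C}$ form in \eqref{eq:repr_C}: a weighted average of the two arguments with positive weights, scaled by $\sigma$. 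This gives
\begin{displaymath}
    \mathcal{C}_\sigma(\alpha,\beta)=\frac{\alpha w(\alpha)+\beta w(\beta)}{w(\alpha)+w(\beta)},
    \qquad w(x)=(\sigma^2+x^2)^{-1/2}.
\end{displaymath}
So $\mathcal{C}_\sigma(\alpha,\beta)$ lies between $\alpha$ and $\beta$. If $u_{n+1}$ stays positive, then both arguments are negative and $-\mathcal{C}_\sigma\ge\min(|\lambda|u_{n+1},|\lambda|u_n)=|\lambda|u_{n+1}\ge|\lambda|m$. Thus $u_n-u_{n+1}\ge h|\lambda|m$, which is incompatible with convergence of $u_n$ when $m>0$.

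The remaining case is that the sign changes. Then $u_{n+1}<0$, and $\mathcal{C}_\sigma$ lies between $\lambda u_n<0$ and $\lambda u_{n+1}>0$. I would handle it using $\arctan$ monotonicity. Negativity of $u_{n+1}$ forces $|u_{n+1}|<u_n$ with $\mathcal{C}_\sigma=(u_{n+1}-u_n)/h\le -u_n/h$. Then the half-angle identity gives $\arctan(|\lambda|u_n/\sigma)-\arctan(|\lambda||u_{n+1}|/\sigma)\geq 2\arctan(u_n/(h\sigma))$. I would derive a uniform gap $u_n-|u_{n+1}|\ge c(m,h,\lambda)>0$ from this, either directly or by showing infinitely many such steps contradict $m>0$. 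Either way $m=0$, which proves \eqref{eq:stability_SE-2}.
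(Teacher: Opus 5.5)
Your existence and uniqueness argument is the paper's. Your decay argument is correct and more elementary than the paper's. For $u_n>0$, evaluating $\Phi(u_n)=h|\lambda|u_n>0$ and $\Phi(-u_n)=-2u_n<0$ and using strict monotonicity places the root in $(-|u_n|,|u_n|)$. The paper instead multiplies the scaled relation $\alpha_n-\beta_n=h\lambda\,\mathcal{C}(\alpha_n,\beta_n)$ by $\alpha_n+\beta_n$, uses the sign property of $\mathcal{C}$ from the cited reference, and treats the equality case separately. The same-sign case of your convergence step is also fine. It even gives $u_{n+1}\le u_n/(1+h|\lambda|)$ without reference to $m$.

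The genuine gap is the sign-change case. You identify it as the crux but leave it at ``I would derive a uniform gap \ldots\ either directly or by \ldots''. As written, nothing excludes infinitely many sign-alternating steps with $|u_{n+1}|/|u_n|\to1$ as the arbitrary scales $\sigma_n$ vary, so $m=0$ is not yet proved.

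Your arctan inequality does suffice to close it. Set $x=|\lambda|u_n/\sigma$, $y=|\lambda||u_{n+1}|/\sigma$ and $k=h|\lambda|$; the inequality reads $\arctan x-\arctan y\ge 2\arctan(x/k)$. Since $\arctan x-\arctan y=\arctan\frac{x-y}{1+xy}\le\arctan(x-y)<\frac{\pi}{2}$, you get $x/k<1$. Then $x-y\ge\tan\bigl(2\arctan(x/k)\bigr)\ge 2x/k$, i.e.\ $|u_{n+1}|\le\bigl(1-\frac{2}{h|\lambda|}\bigr)|u_n|$, uniformly in $\sigma_n$.

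Alternatively, combine your bracketing with $1<\Phi'\le 1+2h|\lambda|$, which follows from $0<\partial_\alpha\mathcal{C}_\sigma\le 2$ in the proof of \cref{lem:Lipschitz_C_sigma}. This also gives a contraction factor below $1$ independent of $\sigma_n$.

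Completed either way, your route yields $|u_{n+1}|\le q|u_n|$ with $q<1$ depending only on $h|\lambda|$. That is geometric decay, uniform over all scale sequences, which is stronger than the paper's conclusion. The paper avoids any quantitative estimate. It extracts a subsequence $(u_{n_j},u_{n_j+1})\to(\mu,\nu)$ with $|\mu|=|\nu|=\lim|u_n|$. It then uses the $\sigma$-uniform Lipschitz bound of \cref{lem:Lipschitz_C_sigma} to pass to the limit in the SE relation, obtaining $h\lambda\mu=0$ if $\nu=\mu$ and $-2\mu=0$ if $\nu=-\mu$.
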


\begin{proof}
    Fix $n\in\mathbb{N}\cup\{0\}$.
    Define the functions $\varphi, \psi: \mathbb{R} \to \mathbb{R}$ by
    \begin{align*}
        \varphi(v)
        &:= v - u_n - h \, \mathcal{C}_{\sigma_n}(\lambda v, \lambda u_n),    \\
        \psi(v)
        &:= \mathcal{C} \left( \frac{\lambda v}{\sigma_n}, \frac{\lambda u_n}{\sigma_n} \right).
    \end{align*}
    By the definition of $\mathcal{C}_{\sigma_n}$, we have
    \begin{displaymath}
        \varphi(v)
        =
        v - u_n - h \sigma_n \, \psi(v).
    \end{displaymath}
    By \cite[Lems.~2.1 and~A.1]{2026a_Jung}, $\mathcal{C}$ is continuous and strictly increasing in its first variable.
    Since $\lambda<0$, the function $\psi$ is continuous and strictly decreasing.
    Consequently, $\varphi$ is continuous and strictly increasing.

    We claim that
    \begin{displaymath}
        \lim_{v\to-\infty}\varphi(v)=-\infty
        \qquad\text{and}\qquad
        \lim_{v\to\infty}\varphi(v)=\infty.
    \end{displaymath}
    Indeed, by \cite[Lem.~A.2~(m)]{2026a_Jung}, the function $\psi$ is bounded as $|v| \to \infty$.
    Hence, there exist constants $M>0$ and $V>0$ such that $|\varphi(v)-v| \leq M$ whenever $|v|\geq V$.
    Therefore, $v-M \leq \varphi(v) \leq v+M$ whenever $|v|\geq V$, proving the claim.

    By the claim and the continuity of $\varphi$, the Intermediate Value Theorem asserts that there exists a point $v_n^{\ast} \in \mathbb{R}$ such that $\varphi(v_n^{\ast}) = 0$.
    Since $\varphi$ is strictly increasing, this point is unique.
    Setting $u_{n+1} := v_n^{\ast}$, we obtain the unique solution of \eqref{eq:SE_implicit_step}.

    Define
    \begin{displaymath}
        \alpha_n
        :=
        \frac{\lambda u_{n+1}}{\sigma_n}
        \qquad\text{and}\qquad
        \beta_n
        :=
        \frac{\lambda u_n}{\sigma_n}.
    \end{displaymath}
    Since $\varphi(u_{n+1})=0$, the definition of $\mathcal{C}_{\sigma_n}$ gives
    \begin{equation}
        \label{eq:stability_SE-1}
        \alpha_n - \beta_n
        =
        h\lambda \, \mathcal{C}(\alpha_n, \beta_n).
    \end{equation}
    Multiplying \eqref{eq:stability_SE-1} by $\alpha_n + \beta_n$, we obtain
    \begin{displaymath}
        \alpha_n^2 - \beta_n^2
        =
        h\lambda \, \mathcal{C}(\alpha_n, \beta_n) (\alpha_n + \beta_n)
        \leq
        0,
    \end{displaymath}
    where the inequality follows from the fact that $\mathcal{C}(\alpha_n, \beta_n)$ and $\alpha_n + \beta_n$ have the same sign \cite[Lem.~A.2~(g)]{2026a_Jung}.
    Thus, $\alpha_n^2 \leq \beta_n^2$, which yields \eqref{ineq:stability_SE-1}.

    Suppose that equality holds in \eqref{ineq:stability_SE-1}.
    Then $\mathcal{C}(\alpha_n, \beta_n) (\alpha_n + \beta_n) = 0$.
    By \cite[Lem.~A.2~(f)]{2026a_Jung}, $\mathcal{C}(\alpha_n, \beta_n) = 0$ if and only if $\alpha_n + \beta_n = 0$.
    Hence, both factors vanish.
    Equation \eqref{eq:stability_SE-1} then gives $\alpha_n = \beta_n$.
    Consequently, $\alpha_n = \beta_n = 0$, and therefore $u_n = 0$.
    Thus, the inequality is strict whenever $u_n \neq 0$.

    Now we show \eqref{eq:stability_SE-2}.
    Iterating \eqref{ineq:stability_SE-1}, we obtain
    \begin{displaymath}
        0
        \leq
        |u_{n+1}|
        \leq
        |u_n|
        \leq
        \cdots
        \leq
        |u_0|.
    \end{displaymath}
    Since the sequence $\{|u_n|\}_{n=0}^{\infty}$ is nonincreasing and bounded below by zero, there exists $\beta \geq 0$ such that $|u_n| \to \beta$ as $n \to \infty$.

    We claim that $\beta = 0$.
    Suppose, to the contrary, that $\beta > 0$.
    Since $|u_n|\leq|u_0|$, the sequence $\bigl\{(u_n,u_{n+1})\bigr\}_{n=0}^{\infty}$ is bounded in $\mathbb{R}^2$.
    Hence, there exist a strictly increasing sequence of indices $\{n_j\}_{j=1}^{\infty}$ and $\mu,\nu\in\mathbb{R}$ such that $(u_{n_j}, u_{n_j+1}) \to (\mu,\nu)$ as $j\to\infty$.
    Since $|u_n|\to\beta$, we have $|\mu|=|\nu|=\beta$, and therefore either $\nu=\mu$ or $\nu=-\mu$.

    Suppose first that $\nu=\mu$.
    Since $\mathcal{C}_{\sigma_{n_j}}(\lambda\mu,\lambda\mu) = \lambda\mu$, \cref{lem:Lipschitz_C_sigma} gives
    \begin{displaymath}
        \left| \mathcal{C}_{\sigma_{n_j}} \left( \lambda u_{n_j+1}, \lambda u_{n_j} \right) - \lambda\mu \right|
        \leq
        2 |\lambda| \left( |u_{n_j+1}-\mu| + |u_{n_j}-\mu| \right)
        \to
        0.
    \end{displaymath}
    Passing to the limit in the SE relation, we obtain
    \begin{displaymath}
        0
        =
        \nu - \mu
        =
        \lim_{j\to\infty} \left(u_{n_j+1}-u_{n_j}\right)
        =
        h \lim_{j\to\infty} \mathcal{C}_{\sigma_{n_j}} \left( \lambda u_{n_j+1}, \lambda u_{n_j} \right)
        =
        h \lambda \mu.
    \end{displaymath}
    Hence, $\mu=0$, contradicting $|\mu|=\beta>0$.

    Suppose now that $\nu=-\mu$.
    Since $\mathcal{C}_{\sigma_{n_j}}(-\lambda\mu,\lambda\mu) = 0$, \cref{lem:Lipschitz_C_sigma} gives
    \begin{displaymath}
        \left| \mathcal{C}_{\sigma_{n_j}}  \left( \lambda u_{n_j+1}, \lambda u_{n_j} \right) \right|
        \leq
        2|\lambda| \left( |u_{n_j+1}+\mu| + |u_{n_j}-\mu| \right)
        \to
        0.
    \end{displaymath}
    As in the preceding case, passing to the limit in the SE relation gives $-2\mu = 0$, which again contradicts $|\mu| = \beta > 0$.

    Therefore, $\beta = 0$, and hence \eqref{eq:stability_SE-2} follows.
\end{proof}

To the best of our knowledge, specular differentiation over $\mathbb{C}$ has not yet been developed.
We therefore restrict the stability analysis to real $\lambda < 0$.

\section{Choice of the scale for third-order accuracy}
\label{sec:scale_third_order}

The scale $\sigma_n$ affects the leading terms in the local truncation error of SE.
The solution-graph conditions \eqref{eq:sol_graph_3rd} and \eqref{eq:sol_graph_4th} describe how it may be chosen to obtain third- and fourth-order accuracy.
However, these conditions involve the exact solution and cannot be evaluated directly in a numerical computation.

While $\gamma_\sigma$ depends on the exact solution, $\mathcal{D}_\sigma$ can be evaluated directly from $F$ and its derivatives at numerical points $(t_n,u_n)$.
We seek third-order accuracy by choosing $\sigma$ to cancel $\mathcal{D}_\sigma$, while the next section considers the corresponding fourth-order problem.

\Cref{cor:consistency_of_SE-3rd} of \cref{cor:consistency_of_SE} shows that \eqref{eq:sol_graph_3rd} is sufficient for third-order consistency.
\Cref{eq:curvature_defect_relation-2} shows that condition~\eqref{eq:sol_graph_3rd} is equivalent along the exact solution to
\begin{equation}
    \label{eq:field_3rd}
    \mathcal{D}_{\sigma_n}\bigl(t_n,u(t_n)\bigr)
    =
    0.
\end{equation}
At the beginning of the $n$-th step, we replace the unavailable point $\bigl(t_n,u(t_n)\bigr)$ by its numerical counterpart $(t_n,u_n)$.
The resulting value $\mathcal{D}_\sigma(t_n,u_n)$ can be computed directly from $F$ and its derivatives.
A positive scale giving exact field cancellation need not exist.
We therefore consider, for each fixed point $(t,x)$ in the domain of $F$, the one-dimensional problem
\begin{equation}
    \label{opt:field_3rd}
    \inf_{\sigma>0}
    \left|
        \mathcal{D}_\sigma(t,x)
    \right|.
\end{equation}

\begin{definition}
    A minimizer $\sigma>0$ of the problem \eqref{opt:field_3rd} is said to be
    \emph{optimal} if
    \begin{displaymath}
        \mathcal{D}_\sigma(t,x)
        =
        0.
    \end{displaymath}
\end{definition}

Thus, an optimal minimizer gives exact field cancellation, whereas a nonoptimal minimizer realizes the smallest positive residual.
If the infimum is not attained, no positive minimizing scale exists and the boundary behavior as $\sigma \searrow 0$ or  $\sigma \to \infty$ must be considered.

\subsection{Classification based on scaled field quantities}

The following theorem classifies the positive defect-cancelling scales of \eqref{opt:field_3rd} and determines whether the residual infimum is zero or positive in the remaining cases.

\begin{theorem} \label{thm:sigma_selection_3rd_order}
    Let $\Omega\subset\mathbb{R}^2$ be open and let $F\in C^2(\Omega)$.
    Fix $(t, x) \in \Omega$.
    In the following statements, all quantities involving $F$ are evaluated at $(t,x)$.
    \begin{enumerate}[label=\upshape(G\arabic*),ref=(G\arabic*),itemsep=1ex]
        \item \label{G1}
        If $F\mathcal{L}_F F=0$, then every $\sigma>0$ is a minimizer of \eqref{opt:field_3rd} and
        \begin{displaymath}
            \min_{\sigma > 0} \left|\mathcal{D}_{\sigma}\right| = \left\vert \mathcal{L}^2_F F \right\vert .
        \end{displaymath}
        These minimizers are optimal if and only if $\mathcal{L}_F^2F=0$.

        \item \label{G2}
        Suppose that $F\mathcal{L}_F F\neq0$ and $F\mathcal{L}_F^2F \leq 0$.
        Then the problem \eqref{opt:field_3rd} has no minimizer and
        \begin{displaymath}
            \inf_{\sigma>0}
            \left|\mathcal{D}_{\sigma}\right|
            =
            \lim_{\sigma\to\infty}
            \left|\mathcal{D}_{\sigma}\right|
            =
            \left|\mathcal{L}_F^2F\right|.
        \end{displaymath}
        
        \item \label{G3}
        Suppose that $F\mathcal{L}_F F\neq0$, $F\mathcal{L}_F^2F>0$, and $3\left(\mathcal{L}_F F\right)^2 > F\mathcal{L}_F^2F$.
        Then the problem \eqref{opt:field_3rd} has a unique optimal minimizer $\sigma^{\ast} (t,x) > 0$, which is given by%
        \begin{equation} \label{eq:pointwise_scale_selection_F}
            \sigma^{\ast}
            =
            \left( \frac{3F \left(\mathcal{L}_F F\right)^2}{\mathcal{L}_F^2 F} - F^2 \right)^{\frac{1}{2}},
        \end{equation}
        and for which 
        \begin{displaymath}
            \min_{\sigma>0} \left|\mathcal{D}_\sigma\right|
            =
            \left| \mathcal{D}_{\sigma^{\ast}} \right|
            =
            0.
        \end{displaymath}

        \item \label{G4}
        Suppose that $F\mathcal{L}_F F\neq0$, $F\mathcal{L}_F^2F>0$, and $3\left(\mathcal{L}_F F\right)^2 \leq F\mathcal{L}_F^2F$.
        Then the problem \eqref{opt:field_3rd} has no minimizer and
        \begin{equation}    \label{eq:sigma_selection_3rd_order_G4}
            \inf_{\sigma > 0} \left|\mathcal{D}_{\sigma}\right|
            =
            \lim_{\sigma \searrow 0} \left|\mathcal{D}_{\sigma}\right|
            =
            \left| \mathcal{L}_F^2F - \frac{3\left(\mathcal{L}_F F\right)^2}{F} \right|.
        \end{equation}
    \end{enumerate}
\end{theorem}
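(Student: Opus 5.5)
The plan is to reduce everything to an elementary study of a one-variable function. Fix $(t,x)$ and abbreviate $a:=F$, $b:=\mathcal{L}_FF$, $c:=\mathcal{L}_F^2F$, all evaluated at $(t,x)$. By \eqref{def:normalized_curvature_defect} and \eqref{eq:norm_V}, $\sigma^2\|\mathcal{V}_\sigma\|^2=\sigma^2+a^2$. Hence
\begin{displaymath}
    \mathcal{D}_\sigma = c - \frac{3ab^2}{\sigma^2+a^2}.
\end{displaymath}
Substituting $s:=\sigma^2\in(0,\infty)$, I will study $g(s):=c-3ab^2/(s+a^2)$. This is a strictly monotone function of $s$ whenever $ab\neq0$, and it is constant when $ab=0$. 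Its boundary values are
\begin{displaymath}
    \lim_{s\to\infty}g(s)=c, \qquad \lim_{s\searrow0}g(s)=c-3b^2/a.
\end{displaymath}
The latter limit requires $a\neq0$.

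Case \ref{G1} is immediate. Here $ab=0$, so $g\equiv c$, every $\sigma$ is a minimizer with value $|c|$, and these minimizers are optimal exactly when $c=0$.

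In the remaining cases $ab\neq0$, so $a\neq0$ and $b\neq0$. Then $g$ is strictly monotone and maps $(0,\infty)$ onto the open interval $I$ with endpoints $c-3b^2/a$ and $c$. The key observation is that $|g|$ attains a minimum on $(0,\infty)$ if and only if $0\in I$, and then the minimizer is the unique zero. If $0\notin I$, then $|g|$ is strictly monotone, so the infimum equals $\min(|c|,|c-3b^2/a|)$, approached at one end of the interval but never attained.

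I will test the condition $0\in I$ by multiplying by $a^2>0$. We have $a^2 g(\infty)=a\cdot(ac)$ and $a^2 g(0^+)=a(ac-3b^2)$. So $0$ lies strictly between the endpoints if and only if $ac$ and $ac-3b^2$ have strictly opposite signs. Since $ac-3b^2<ac$, this means $ac>0$ and $3b^2>ac$, which is exactly the hypothesis of \ref{G3}.

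In that case I solve $g(s)=0$, which gives $s=3ab^2/c-a^2$. This is positive precisely because $3b^2>ac$ and $ac>0$. Taking the positive square root gives the formula \eqref{eq:pointwise_scale_selection_F}.

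The remaining work is to decide which endpoint realizes the infimum; this is the only step requiring care.
\begin{itemize}
    \item For \ref{G2}, suppose $ac\le0$. Then $ac$ and $ac-3b^2<0$ are both nonpositive, and $|ac-3b^2|=3b^2-ac>|ac|$ strictly. So the infimum is the $s\to\infty$ value $|c|$. Non-attainment follows from strict monotonicity of $|g|$ on a one-signed range. If $c=0$, the infimum $0$ is approached but not attained because $g(s)\neq c$ for finite $s$.
    \item For \ref{G4}, suppose $ac>0$ and $ac-3b^2\ge0$. Then $0\le ac-3b^2<ac$, so the $s\searrow0$ end is closer to zero. This gives $\left|\mathcal{L}_F^2F-3(\mathcal{L}_FF)^2/F\right|$, again not attained.
\end{itemize}
These comparisons, dividing by $|a|$, complete the classification.
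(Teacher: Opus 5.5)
Your proof is correct and follows essentially the same route as the paper. The paper also writes $\mathcal{D}_\sigma=\mathcal{L}_F^2F-3F(\mathcal{L}_FF)^2/(\sigma^2+F^2)$, settles (G1) directly, and then uses the strictly decreasing bijection $\varphi(\sigma)=F^2/(\sigma^2+F^2)$ from $(0,\infty)$ onto $(0,1)$ to locate $F\mathcal{L}_F^2F$ relative to the endpoints $0$ and $3(\mathcal{L}_FF)^2$; this is your endpoint-sign test in a different normalization, and your criterion that $|g|$ attains its minimum exactly when $0$ lies in the open range of $g$ just packages the same case split more uniformly.
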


\begin{proof}
    By \eqref{def:normalized_curvature_defect}, we have
    \begin{displaymath}
        \mathcal{D}_\sigma
        =
        \mathcal{L}_F^2F - \frac{ 3F(\mathcal{L}_FF)^2 }{ \sigma^2 + F^2}.
    \end{displaymath}
    First, suppose that $F\mathcal{L}_F F = 0$.
    Then $\mathcal{D}_\sigma = \mathcal{L}_F^2 F$ for every $\sigma > 0$.
    Thus, every $\sigma > 0$ is a minimizer, and these minimizers are optimal if and only if $\mathcal{L}_F^2F=0$.
    This proves case~\ref{G1}.

    Now, suppose that $F\mathcal{L}_F F \neq 0$.
    Then $F \neq 0$ and $\mathcal{L}_F F \neq 0$.
    Define the function $\varphi : (0, \infty) \to (0, 1)$ by 
    \begin{displaymath}
        \varphi(\sigma) := \frac{F^2}{\sigma^2 + F^2}.
    \end{displaymath}
    This function is a strictly decreasing bijection from $(0,\infty)$ onto $(0, 1)$.
    Since $F \neq 0$, we have 
    \begin{equation} \label{eq:pointwise_defect-1}
        |\mathcal{D}_{\sigma}| 
        = 
        \frac{1}{|F|} \left\vert F \mathcal{L}_F^2 F - 3 (\mathcal{L}_F F)^2 \varphi(\sigma) \right\vert.
    \end{equation}
    
    If $F\mathcal{L}_F^2F \leq 0$, then $F\mathcal{L}_F^2F < 3 (\mathcal{L}_F F)^2 \varphi(\sigma)$ for every $\sigma > 0$.
    Therefore, $\mathcal{D}_\sigma\neq 0$ for every $\sigma>0$.
    Moreover, \eqref{eq:pointwise_defect-1} yields
    \begin{displaymath}
        |\mathcal{D}_\sigma|
        =
        |\mathcal{L}_F^2F| + \frac{3(\mathcal{L}_FF)^2}{|F|} \varphi(\sigma)
        >
        |\mathcal{L}_F^2F|
    \end{displaymath}
    for every $\sigma>0$.
    Since $\varphi(\sigma)\to0$ as $\sigma\to\infty$, we obtain
    \begin{displaymath}
        \inf_{\sigma>0}|\mathcal{D}_\sigma|
        =
        \lim_{\sigma\to\infty}|\mathcal{D}_\sigma|
        =
        |\mathcal{L}_F^2F|.
    \end{displaymath}
    The infimum is not attained since $\varphi(\sigma) > 0$ for every $\sigma>0$.
    This proves \ref{G2}.

    It remains to consider the case $F\mathcal{L}_F^2 F > 0$. 
    First, suppose that $F\mathcal{L}_F^2F < 3(\mathcal{L}_FF)^2$.
    Then
    \begin{displaymath}
        0
        <
        \frac{F\mathcal{L}_F^2F}{3(\mathcal{L}_FF)^2}
        <
        1.
    \end{displaymath}
    Since $\varphi$ is a bijection from $(0,\infty)$ onto $(0,1)$, there exists a unique $\sigma^\ast > 0$ such that
    \begin{equation}    \label{eq:pointwise_defect-2}
        \varphi(\sigma^\ast)
        =
        \frac{F\mathcal{L}_F^2F}{3(\mathcal{L}_FF)^2}.
    \end{equation}
    By \eqref{eq:pointwise_defect-1}, $\mathcal{D}_{\sigma^\ast}=0$.
    Solving the equation \eqref{eq:pointwise_defect-2} for $\sigma^\ast$ gives
    \begin{displaymath}
        \sigma^\ast
        =
        \left(\frac{3F(\mathcal{L}_FF)^2}{\mathcal{L}_F^2F} - F^2\right)^{\frac{1}{2}}.
    \end{displaymath}
    Thus, $\sigma^\ast$ is the unique optimal minimizer.
    This proves case~\ref{G3}.

    On the other hand, suppose that $3(\mathcal{L}_F F)^2 \leq F\mathcal{L}_F^2 F$.
    Then, for every $\sigma>0$,
    \begin{displaymath}
        3(\mathcal{L}_FF)^2\varphi(\sigma)
        <
        3(\mathcal{L}_FF)^2
        \leq
        F\mathcal{L}_F^2F.
    \end{displaymath}
    Therefore, $\mathcal{D}_\sigma\neq0$ for every $\sigma>0$.
    Moreover, \eqref{eq:pointwise_defect-1} gives
    \begin{displaymath}
        |\mathcal{D}_\sigma|
        =
        \frac{F\mathcal{L}_F^2F - 3(\mathcal{L}_FF)^2\varphi(\sigma)}{|F|}
        >
        \frac{F\mathcal{L}_F^2F - 3(\mathcal{L}_FF)^2 }{|F|}
        =
        \left|\mathcal{L}_F^2F - \frac{3(\mathcal{L}_FF)^2}{F}\right|.
    \end{displaymath}
    Since $\varphi(\sigma)\to1$ as $\sigma\searrow0$, we obtain
    \begin{displaymath}
        \inf_{\sigma>0}|\mathcal{D}_\sigma|
        =
        \lim_{\sigma\searrow0}|\mathcal{D}_\sigma|
        =
        \left| \mathcal{L}_F^2F - \frac{3(\mathcal{L}_FF)^2}{F}\right|.
    \end{displaymath}
    The infimum is not attained since $\varphi(\sigma) < 1$ for every $\sigma > 0$.
    This proves case~\ref{G4}.
\end{proof}

In case~\ref{G1}, every positive scale is a minimizer, and these minimizers are optimal if and only if $\mathcal{L}_F^2F=0$; otherwise, the criterion does not favor any positive scale.
In case~\ref{G2}, the criterion does not select a finite preferred scale; the infimum is approached as $\sigma \to \infty$.
In case~\ref{G4}, the defect is reduced by taking $\sigma$ as small as practicable.
In case~\ref{G3}, the unique optimal minimizer $\sigma^{\ast}$ makes the field-based coefficient of the leading $O(h^2)$ term in the local truncation error vanish at the selected point.
When this point lies on the exact trajectory, this $O(h^2)$ term is cancelled.

The following example revisits the ellipse equation in \cref{ex:SE_trace}.
Away from $t=p$, the right-hand side satisfies the hypotheses of case~\ref{G3}, and the selected scale $\sigma^{\ast}$ given by \eqref{eq:pointwise_scale_selection_F} recovers the exact-tracing value $\sigma=\frac{b}{a}$.

\begin{example} \label{ex:ellipse_F}
    Let $u$ be the upper-branch solution given by \eqref{eq:ellipse_sol} on $[p,T]$, where $p<T<p+a$.
    At $t=p$, we have $F=0$ and $\mathcal{L}_F^2F=0$.
    Thus, case~\ref{G1} applies, and every $\sigma>0$ is an optimal minimizer.

    Now suppose that $t\neq p$.
    Since $u\neq q$ on the domain of $F$, we have
    \begin{displaymath}
        F\mathcal{L}_F F
        =
        \left(\frac{b}{a}\right)^4
        \frac{
            (t-p)
            \left(
                (u-q)^2+\left(\frac{b}{a}\right)^2(t-p)^2
            \right)
        }{
            (u-q)^4
        }
        \neq 0.
    \end{displaymath}
    Moreover,
    \begin{displaymath}
        F\mathcal{L}_F^2F
        =
        3\left(\frac{b}{a}\right)^6
        \frac{
            (t-p)^2
            \left(
                (u-q)^2+\left(\frac{b}{a}\right)^2(t-p)^2
            \right)
        }{
            (u-q)^6
        }
        >
        0
    \end{displaymath}
    and
    \begin{displaymath}
        3\left(\mathcal{L}_F F\right)^2
        -
        F\mathcal{L}_F^2F
        =
        3\left(\frac{b}{a}\right)^4
        \frac{
            (u-q)^2+\left(\frac{b}{a}\right)^2(t-p)^2
        }{
            (u-q)^4
        }
        >
        0.
    \end{displaymath}
    Hence, case~\ref{G3} applies.
    The preceding formulas give the unique optimal minimizer $\sigma^{\ast} = \frac{b}{a}$, which is the value for which SE has zero local truncation error along the corresponding elliptic branch.

    For the unit-circle solution of \eqref{ode:circle_eq}, corresponding to $p=q=0$ and $a=b=1$, this formula again gives $\sigma^{\ast} = 1$.
\end{example}

\Cref{thm:sigma_selection_3rd_order} classifies the possible scale choices at each point; it does not guarantee either a finite preferred scale at every step or a single fixed scale throughout the computation.

\begin{example} \label{ex:Dahlquist}
    Consider the Dahlquist equation \eqref{ODE:Dahlquist} with $\lambda \in \mathbb{R}$.
    For $F(u) = \lambda u$, a direct calculation gives $\mathcal{L}_F F = \lambda^2u$ and $\mathcal{L}_F^2 F = \lambda^3u$.
    If $\lambda=0$ or $u=0$, then $F=\mathcal{L}_F F=\mathcal{L}_F^2 F=0$, and hence case~\ref{G1} applies and every $\sigma>0$ is an optimal minimizer.
    Now suppose that $\lambda \neq 0$ and $u \neq 0$.
    Then
    \begin{displaymath}
        F\mathcal{L}_F F
        =
        \lambda^3 u^2
        \neq0 
        \qquad\text{and}\qquad
        3\left(\mathcal{L}_F F\right)^2
        =
        3\lambda^4u^2
        >
        \lambda^4u^2
        =
        F\mathcal{L}_F^2F
        >
        0.
    \end{displaymath}
    Hence, case~\ref{G3} applies, and its unique optimal minimizer is $\sigma^{\ast}(t,u) = \sqrt{2}\,|\lambda u|$.
    Unlike the elliptic problem in \cref{ex:SE_trace}, this scale depends on $u$ and varies along every nonzero solution.

    If $\lambda<0$, choose
    \begin{displaymath}
        \sigma_n
        :=
        \begin{cases}
            \sqrt{2}\,|\lambda u_n|, & u_n\neq0,\\
            1, & u_n=0.
        \end{cases}
    \end{displaymath}
    By \cref{thm:stability_SE}, \eqref{eq:SE_implicit_step} has a unique solution at each step, the magnitude strictly decreases at every nonzero iterate, and $u_n=0$ implies $u_{n+1}=0$.
    Hence, either the sequence becomes identically zero after finitely many steps, or it remains nonzero and converges to zero.
\end{example}

The following example shows that the applicable case among \ref{G1}--\ref{G4} can vary along a single solution trajectory.

\begin{example}
    \label{ex:autonomous_most_cases}
    Consider the equation \eqref{ODE:IVP} with $t_0=0$, $u_0\in\mathbb{R}$, and
    \begin{equation}
        \label{ODE:autonomous_most_cases}
        F(t,u) := 1-u^2.
    \end{equation}
    A direct calculation gives $\mathcal{L}_F F = -2u(1-u^2)$ and $\mathcal{L}_F^2 F = 2(1-u^2)(3u^2-1)$.

    At $u=\pm1$, case~\ref{G1} applies and every $\sigma>0$ is an optimal minimizer, since $\mathcal{D}_\sigma(u)=0$.
    At $u=0$, case~\ref{G1} also applies, but every $\sigma>0$ is a nonoptimal minimizer, since $\mathcal{D}_\sigma(0)=\mathcal{L}_F^2 F(0)=-2$.

    Now suppose that $u \notin \left\{ -1, 0, 1 \right\}$.
    Then
    \begin{displaymath}
        F \mathcal{L}_F F
        =
        -2u(1-u^2)^2
        \neq
        0 
        \qquad\text{and}\qquad
        F\mathcal{L}_F^2 F
        =
        2(1-u^2)^2(3u^2-1).
    \end{displaymath}
    Hence, if $0<|u|\leq\frac{1}{\sqrt{3}}$, then case~\ref{G2} applies.
    If $|u|>\frac{1}{\sqrt3}$, then case~\ref{G3} applies.

    In particular, the solution $u(t)=\tanh t$ with $u(0)=0$ passes from case~\ref{G1} through case~\ref{G2} and then case~\ref{G3}, while the equilibrium $u=1$, which again belongs to case~\ref{G1}, is approached only as $t\to\infty$.
\end{example}

\begin{remark}  \label{rmk:extreme_sigma}
    As $\sigma$ varies, $\mathcal{C}_\sigma$ transitions between harmonic-type and arithmetic averaging.
    Indeed, by \eqref{eq:C_infty}, $\mathcal{C}_\sigma(\alpha,\beta)$ approaches the arithmetic mean of $\alpha$ and $\beta$ as $\sigma\to\infty$.
    Consequently, the SE relation approaches the CN relation.
    On the other hand, by \eqref{eq:C_0}, $\mathcal{C}_\sigma(\alpha,\beta)$ approaches the harmonic mean of $\alpha$ and $\beta$ as $\sigma\searrow0$ when they have the same sign.
\end{remark}

Accordingly, our main interest lies in cases~\ref{G3} and~\ref{G4}.
However, the pointwise classification does not by itself provide the local truncation-error estimates required for third-order convergence.
We therefore analyze the two cases separately below.

\subsection{Optimal scale}

In case~\ref{G3}, the positive optimal scale is explicit.
The following theorem establishes third-order convergence when this scale is evaluated at $(t_n,u_n)$.

\begin{theorem}[Third-order convergence in case~\ref{G3}]
    \label{thm:third_order_SE}
    Suppose that hypothesis~\ref{H1} holds, with $F\in C^3(\Omega)$. Suppose further that
    \begin{equation}
        \label{ineq:uniform_G3_condition}
        0
        <
        F(t,x)\mathcal{L}_F^2F(t,x)
        <
        3\bigl(\mathcal{L}_FF(t,x)\bigr)^2
    \end{equation}
    for every $(t,x)\in U_\rho(u)$. Define the function $\Sigma:U_\rho(u)\to(0,\infty)$ by
    \begin{equation}
        \label{eq:G3_scale_function}
        \Sigma(t,x)
        :=
        \left(
            \frac{
                3F(t,x)\bigl(\mathcal{L}_FF(t,x)\bigr)^2
            }{
                \mathcal{L}_F^2F(t,x)
            }
            -
            F(t,x)^2
        \right)^{\frac{1}{2}}.
    \end{equation}
    Consider SE with
    \begin{equation}
        \label{eq:SE3}
        \sigma_n
        :=
        \Sigma(t_n,u_n),
        \qquad
        n=0,1,\ldots,N_h-1.
    \end{equation}
    Then there exist constants $H>0$ and $C_g>0$ such that, for every $h \in (0, H]$, there exists a unique sequence $\{u_n\}_{n=0}^{N_h}$ satisfying \eqref{mth:SE} with $\{(t_n,u_n)\}_{n=0}^{N_h} \subseteq U_\rho(u)$, and the estimate
    \begin{displaymath}
        \max_{0\leq n\leq N_h}|u(t_n)-u_n|
        \leq
        C_gh^3.
    \end{displaymath}
\end{theorem}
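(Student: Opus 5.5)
We plan to reduce the statement to the appendix lemma \cref{lem:lte_to_global_error_SE}, exactly as in the proof of \cref{thm:convergence_of_SE}. The difference is that now the scale is state-dependent: we take $\Sigma_n(t,x;s,y):=\Sigma(t,x)$. The lemma then needs two inputs. The first is a Lipschitz-type bound for the increment map $(x,y)\mapsto \mathcal{C}_{\Sigma(t_n,x)}(F(t_{n+1},y),F(t_n,x))$ on $U_\rho(u)$. The second is a local truncation estimate with $p=3$ and a perturbation term, whose size is presumably governed by the parameter $Q$ in the lemma. This term accounts for the scale being evaluated at $u_n$ rather than at $u(t_n)$.

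\textbf{Step 1 (regularity of $\Sigma$).} The set $U_\rho(u)$ is compact, $F\in C^3$, and \eqref{ineq:uniform_G3_condition} holds there. Hence the following quantities are continuous and strictly positive on $U_\rho(u)$:
\begin{itemize}
\item $\mathcal{L}_F^2F$ has the sign of $F$ and is bounded away from zero;
\item $\Sigma^2=F\bigl(3(\mathcal{L}_FF)^2-F\mathcal{L}_F^2F\bigr)/\mathcal{L}_F^2F$ is bounded away from zero.
\end{itemize}
It follows that $\Sigma$ is $C^1$ on $U_\rho(u)$, that $0<\sigma_{\min}\le\Sigma\le\sigma_{\max}$, and that $\Sigma$ is Lipschitz in $x$ with some constant $L_\Sigma$. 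In particular \ref{H2} holds for any sequence $\sigma_n=\Sigma(t_n,x_n)$ with $(t_n,x_n)\in U_\rho(u)$, with $\eta=\sigma_{\min}^2$.

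\textbf{Step 2 (Lipschitz bound in $\sigma$).} From \cref{lem:Lipschitz_C_sigma} we have Lipschitz continuity in $(\alpha,\beta)$ with constant $2$. We additionally need a bound for $|\partial_\sigma\mathcal{C}_\sigma(\alpha,\beta)|$ that is uniform for $\sigma\in[\sigma_{\min},\sigma_{\max}]$ and bounded $\alpha,\beta$. This follows from smoothness of $\mathcal{C}$; it is presumably among the properties in \cref{apx:C_sigma}. Moreover, $\mathcal{C}_\sigma(\alpha,\alpha)=\alpha$ for every $\sigma$, so $\partial_\sigma\mathcal{C}_\sigma(\alpha,\beta)=O(|\alpha-\beta|)$. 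Combining these bounds gives a Lipschitz constant $\Lambda$ for the increment map, which is what the lemma needs.

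\textbf{Step 3 (local truncation error with the numerical scale).} We split the error as
\[
\ell_{n+1}=\ell_{n+1}^{\ast}+\Bigl[\mathcal{C}_{\sigma_n^\ast}-\mathcal{C}_{\sigma_n}\Bigr]\bigl(F(t_{n+1},u(t_{n+1})),F(t_n,u(t_n))\bigr),
\qquad \sigma_n^\ast:=\Sigma(t_n,u(t_n)).
\]
\begin{itemize}
\item By \cref{thm:sigma_selection_3rd_order} (case \ref{G3}), $\mathcal{D}_{\sigma_n^\ast}(t_n,u(t_n))=0$, which is \eqref{eq:sol_graph_3rd} via \eqref{eq:curvature_defect_relation-2}. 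Since $u\in C^4$ (as $F\in C^3$), \cref{thm:lte}\ref{thm:lte-2} gives $|\ell^\ast_{n+1}|\le Ch^3$, uniformly in $n$ by Step 1.
\item For the bracket, the two arguments differ by $u'(t_{n+1})-u'(t_n)=O(h)$. The $\sigma$-derivative bound from Step 2 then gives a bound $Ch\,|\sigma_n^\ast-\sigma_n|\le ChL_\Sigma|e_n|$.
\end{itemize}
Thus the local defect is $C_\ell h^3+Q h|e_n|$. This is precisely the structure the lemma absorbs through $Q$, contributing an extra $e^{Q(T-t_0)}$ factor in the Gronwall bound. The lemma's inductive argument keeps $(t_n,u_n)$ inside $U_\rho(u)$ for $h\le H$, which justifies evaluating $\Sigma(t_n,u_n)$. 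It also yields existence and uniqueness of each implicit step by a contraction argument, since $h\Lambda<1$.

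\textbf{Main obstacle.} We expect the main obstacle to be Step 2/3: verifying that $\mathcal{C}_\sigma$ depends on $\sigma$ with a derivative vanishing on the diagonal, uniformly on the relevant compact set. This is what makes the scale-perturbation error $O(h|e_n|)$ rather than $O(|e_n|)$, and without it the method would not even be stable in the required form. If \cref{apx:C_sigma} provides no direct statement, we would use the expansion from \cref{lem:uniform_diagonal_C_sigma}: $\mathcal{C}_\sigma(\alpha,\alpha+\delta)=\alpha+\delta/2+O(\delta^2)$ with $\sigma$-dependent coefficients. Its $\sigma$-dependence begins at order $\delta^2$, which gives an even better $O(h^2|e_n|)$ bound.
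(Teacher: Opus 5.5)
This is essentially the paper's proof. It uses the same split of $\ell_{n+1}$ into the exact-scale error $\ell^{\ast}_{n+1}$ plus a scale-perturbation term: $\ell^{\ast}_{n+1}=O(h^3)$ from \cref{thm:lte}~\ref{thm:lte-2} and the \ref{G3} cancellation, the perturbation is controlled through $L_\Sigma|e_n|$, and \cref{lem:lte_to_global_error_SE} is applied with $\Sigma_n(t,x;s,y):=\Sigma(t,x)$.

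Two corrections. First, because this $\Sigma_n$ does not depend on $(s,y)$, \eqref{ineq:lte_to_global_error_SE-0} holds with $\Lambda=2L$ directly from \cref{lem:Lipschitz_C_sigma} and \ref{H1}, so your Step~2 is unnecessary. Second, \eqref{ineq:lte_to_global_error_SE-1} requires the perturbation in the form $Qh^2|e_n|$, not $Qh|e_n|$. The paper obtains this from \cref{lem:scale_sensitivity_along_solution}, via the factor $|\alpha-\beta|^2$ in \cref{lem:scale_sensitivity_C_sigma}; that is your fallback, not your main $O(h|e_n|)$ bound. Your remark that stability hinges on the diagonal vanishing is also inaccurate. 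Since $\ell_{n+1}$ enters the error recurrence multiplied by $h$, even an $O(|e_n|)$ term would be harmless after a trivial adaptation of the lemma. The diagonal vanishing is what makes the term fit the lemma as stated, not what makes the scheme stable.
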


\begin{proof}
    By \eqref{ineq:uniform_G3_condition} and $F\in C^3(\Omega)$, $\Sigma$ is well defined, strictly positive, and continuously differentiable on $U_\rho(u)$.
    Therefore, there exist constants $\underline{\sigma}>0$ and $L_\Sigma\geq0$ such that
    \begin{equation} \label{ineq:SE3-1}
        \underline{\sigma} \leq \Sigma(t,x) 
        \qquad\text{and}\qquad
        |\Sigma(t,x)-\Sigma(t,y)| \leq L_\Sigma|x-y|        
    \end{equation}
    whenever $(t,x), (t,y)\in U_\rho(u)$.
    Moreover, case~\ref{G3} of \cref{thm:sigma_selection_3rd_order} gives
    \begin{equation}
        \label{eq:third_order_SE-2}
        \mathcal{D}_{\Sigma(t,x)}(t,x)
        =
        0
    \end{equation}
    for all $(t,x)\in U_\rho(u)$.

    Let $\{\ell_{n+1}^{\ast}\}_{n=0}^{N_h-1}$ denote the local truncation errors of SE with the exact scale sequence $\{\sigma_n^{\ast}\}_{n=0}^{N_h-1}$, where
    \begin{align*}
        \sigma_n^{\ast}
        &:=
        \Sigma\bigl(t_n,u(t_n)\bigr),\\
        \ell_{n+1}^{\ast}
        &:=
        \frac{u(t_{n+1})-u(t_n)}{h} - \mathcal{C}_{\sigma_n^{\ast}} \left(u'(t_{n+1}), u'(t_n)\right),
    \end{align*}
    for $n=0,1,\ldots,N_h-1$.
    Since $\sigma_n^\ast\geq\underline{\sigma}$, the exact scales $\{\sigma_n^{\ast}\}_{n=0}^{N_h-1}$ satisfy hypothesis~\ref{H2} with $\eta=\underline{\sigma}^2$.
    Since $u'=F(t,u)$ and $F\in C^3(\Omega)$, we have $u\in C^4([t_0,T])$.
    Therefore, \cref{thm:lte}~\ref{thm:lte-2} implies that there exist constants $C^{\ast} > 0$ and $h^{\ast} > 0$, independent of $h$ and $n$, such that 
    \begin{equation}
        \label{ineq:SE3-3}
        |\ell_{n+1}^{\ast}| 
        \leq \frac{h^2}{12} \left\vert \sigma_n^{\ast} \left\| \gamma_{\sigma_n^{\ast}}' (t_n) \right\|^3 \kappa_{\sigma_n^{\ast}}' (t_n) \right\vert + C^{\ast} h^3
        =
        C^{\ast} h^3,
    \end{equation}
    whenever $0 < h \leq h^{\ast}$ and $n = 0, 1, \ldots, N_h - 1$.
    Here, the last equality follows from \eqref{eq:curvature_defect_relation-2} and \eqref{eq:third_order_SE-2}.
    
    Fix $h\in(0,h^\ast]$ and $n\in\{0,1,\ldots,N_h-1\}$ for which $(t_n,u_n)\in U_\rho(u)$.
    By \eqref{eq:SE3} and \eqref{ineq:SE3-1}, we have
    \begin{equation}    \label{ineq:SE3-4}
        |\sigma_n^\ast - \sigma_n|
        =
        |\Sigma(t_n, u(t_n))- \Sigma(t_n, u_n)|
        \leq
        L_{\Sigma}|e_n|
    \end{equation}
    and $\sigma_n,\sigma_n^\ast\geq\underline{\sigma}$.
    Applying \cref{lem:scale_sensitivity_along_solution} with $t = t_{n+1}$, $s = t_n$, $\sigma = \underline{\sigma}$, $\mu = \sigma_n$, and $\nu = \sigma_n^{\ast}$ gives
    \begin{equation}    \label{ineq:SE3-5}
        \left| \mathcal{C}_{\sigma_n}\bigl(u'(t_{n+1}),u'(t_n)\bigr) - \mathcal{C}_{\sigma_n^{\ast}}\bigl(u'(t_{n+1}),u'(t_n)\bigr) \right|
        \leq
        \frac{M_1 M_2^2}{2 \underline{\sigma}^3} h^2 |\sigma_n - \sigma_n^{\ast}|.
    \end{equation}
    By the definitions of $\ell_{n+1}$ and $\ell_{n+1}^\ast$, we have 
    \begin{equation}   \label{eq:third_order_SE-6}
        \ell_{n+1}
        =
        \ell_{n+1}^\ast + \mathcal{C}_{\sigma_n^{\ast}}\bigl(u'(t_{n+1}),u'(t_n)\bigr) - \mathcal{C}_{\sigma_n}\bigl(u'(t_{n+1}),u'(t_n)\bigr) .
    \end{equation}
    Combining \eqref{eq:third_order_SE-6}, \eqref{ineq:SE3-3}, \eqref{ineq:SE3-5}, and \eqref{ineq:SE3-4} gives
    \begin{equation}    \label{eq:third_order_SE-7}
        |\ell_{n+1}|
        \leq
        C^{\ast} h^3 + Q h^2 |e_n|,
    \end{equation}
    where $Q := 2^{-1} \underline{\sigma}^{-3} M_1 M_2^2 L_\Sigma$.
    For every $n$, define $\Sigma_n(t, x; s, y) := \Sigma(t, x)$.
    By \eqref{eq:SE3}, $\Sigma_n(t_n, u_n; t_{n+1}, v) = \sigma_n$ is independent of $v$.
    Therefore, \cref{lem:Lipschitz_C_sigma} and hypothesis~\ref{H1} give \eqref{ineq:lte_to_global_error_SE-0} with $\Lambda=2L$.
    Since $|v-u(t_{n+1})|\geq0$, \eqref{eq:third_order_SE-7} gives \eqref{ineq:lte_to_global_error_SE-1} with $p=3$, $P=C^\ast$, and $\varepsilon = h^\ast$.
    Therefore, \cref{lem:lte_to_global_error_SE} completes the proof.
\end{proof}

The exact scale $\sigma_n^\ast=\Sigma(t_n,u(t_n))$ cancels the defect at $(t_n,u(t_n))$.
The estimate \eqref{ineq:SE3-4} and the factor $|\alpha-\beta|^2$ in \eqref{ineq:scale_sensitivity_C_sigma} show that replacing $\sigma_n^\ast$ by the computable scale $\sigma_n=\Sigma(t_n,u_n)$ changes the local truncation error by $O(h^2|e_n|)$.
Since the local truncation error enters the one-step error relation multiplied by $h$, the resulting contribution to the error recurrence is $O(h^3|e_n|)$.

\begin{example} \label{ex:SE_vs_CN}
    Consider the problem \eqref{ODE:IVP} with $t_0=0$, $u_0=1$, and
    \begin{displaymath}
        F(t,u) := -u^2.
    \end{displaymath}
    The exact solution on $[0, 1]$ is $u(t)=(1+t)^{-1}$.
    Direct calculations give $\mathcal{L}_F F = 2 u^3$ and $\mathcal{L}_F^2 F = -6 u^4$.
    Substitution into \eqref{def:normalized_curvature_defect} gives
    \begin{equation}
        \label{eq:quadratic_decay_defect}
        \mathcal{D}_\sigma(u)
        =
        -6u^4
        +
        \frac{12u^8}{\sigma^2+u^4}
        =
        \frac{6u^4(u^4-\sigma^2)}{\sigma^2+u^4}.
    \end{equation}
    Therefore, for every $u>0$, the conditions of case~\ref{G3} are satisfied, and the unique optimal minimizer is $\sigma^{\ast}(t, u) = u^2$.
    Choosing $\sigma_n=u(t_n)^2$ gives $\ell_{n+1}=O(h^3)$.
    The corresponding pointwise defect cancellation is illustrated in the left panel of \cref{fig:quadratic_decay_defect_cancellation}.
    For the numerical computation, we use $\sigma_n=u_n^2$.
    The assumptions of \cref{thm:third_order_SE} hold on $U_\rho(u)$ for every $\rho\in(0,\frac{1}{2})$, and hence this numerical scale gives third-order convergence.

    By contrast, the local truncation error of CN satisfies
    \begin{displaymath}
        \ell_{n+1}^{\mathrm{CN}}
        =
        \frac{h^2}{2(1+t_n)^4}
        +
        O(h^3),
    \end{displaymath}
    whose leading $O(h^2)$ term is nonzero.
    The numerical results in \cref{fig:SE_vs_CN} confirm third-order convergence for this scale choice and second-order convergence for CN.
\end{example}

\subsection{Scales vanishing with the step size}

Within case~\ref{G4}, if $3\left(\mathcal{L}_FF\right)^2 < F\mathcal{L}_F^2F$, then the infimum in \eqref{eq:sigma_selection_3rd_order_G4} is strictly positive.
Hence, sending $\sigma$ to zero does not cancel the defect, and third-order convergence cannot be expected in general.
In the equality subcase, however, the limiting defect is zero.
The following theorem shows that choosing positive scales satisfying $\sigma_n = O\bigl(\sqrt{h}\bigr)$ uniformly in $n$ improves the convergence order from two to three.

\begin{theorem}[Third-order convergence in the equality case of~\ref{G4}]
    \label{thm:G4_vanishing_scale_SE}
    Suppose that hypothesis~\ref{H1} holds, with $u\in C^4([t_0,T])$ and $F\in C^2(\Omega)$.
    Suppose further that
    \begin{equation}
        \label{eq:G4_nondegeneracy_along_solution}
        F\bigl(t,u(t)\bigr)
        \mathcal{L}_FF\bigl(t,u(t)\bigr)
        \neq
        0
    \end{equation}
    and
    \begin{equation}
        \label{eq:G4_boundary_equality}
        3\left(
            \mathcal{L}_FF\bigl(t,u(t)\bigr)
        \right)^2
        =
        F\bigl(t,u(t)\bigr)
        \mathcal{L}_F^2F\bigl(t,u(t)\bigr)
    \end{equation}
    for every $t\in[t_0, T]$.
    Suppose that there exist constants $C>0$ and $\varepsilon>0$ such that, for every $h\in(0,\varepsilon]$, a sequence of positive scales $\{\sigma_n\}_{n=0}^{N_h-1}$ is chosen and satisfies
    \begin{equation}
        \label{ineq:G4_vanishing_scale_rate}
        \sigma_n\leq C \sqrt{h},
        \qquad
        n=0,1,\ldots,N_h-1.
    \end{equation}
    Then there exist constants $H>0$ and $C_g>0$ such that, for every $h\in(0,H]$, there exists a unique sequence $\{u_n\}_{n=0}^{N_h}$ satisfying \eqref{mth:SE} with $\{(t_n,u_n)\}_{n=0}^{N_h}\subseteq U_\rho(u)$, and the estimate
    \begin{displaymath}
        \max_{0\leq n\leq N_h}|u(t_n)-u_n|
        \leq
        C_gh^3.
    \end{displaymath}
\end{theorem}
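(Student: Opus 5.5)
The plan is to reduce the statement to \cref{thm:convergence_of_SE} with $p=3$. Because the scales in \eqref{ineq:G4_vanishing_scale_rate} are prescribed and do not depend on the numerical values $u_n$, that theorem gives existence, uniqueness within $U_\rho(u)$, and the global bound $h^3$ once we have a uniform local truncation error bound $|\ell_{n+1}|\leq C_\ell h^3$ for all sufficiently small $h$. This is unlike \cref{thm:third_order_SE}, where the scales depend on $u_n$ and an extra $Qh^2|e_n|$ term had to be absorbed. The proof therefore reduces to a consistency estimate.

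\emph{Step 1: hypothesis~\ref{H2} holds even though $\sigma_n\to0$.} By \eqref{eq:G4_nondegeneracy_along_solution}, $u'(t)=F\bigl(t,u(t)\bigr)\neq0$ on $[t_0,T]$. Since $u'$ is continuous on a compact interval, $m:=\min_{[t_0,T]}|u'|>0$. Hence $\sigma_n^2+u'(t_n)^2\geq m^2=:\eta$ for every $h$ and $n$, and \ref{H2} holds with this $\eta$, independently of the scales. With $u\in C^4([t_0,T])$ and $F\in C^2(\Omega)$, \cref{thm:lte}~\ref{thm:lte-2} then gives
\begin{displaymath}
    \ell_{n+1}=-\frac{h^2}{12}\mathcal{D}_{\sigma_n}\bigl(t_n,u(t_n)\bigr)+R_{n+1}(h),
    \qquad |R_{n+1}(h)|\leq C h^3 .
\end{displaymath}
Here $\mathcal{D}_{\sigma_n}$ enters through \eqref{eq:curvature_defect_relation-2}. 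I would check in the proof of \cref{thm:lte} that the constant in $R_{n+1}$ depends only on $M_1,\dots,M_4$ and $\eta$, and not on a lower bound for $\sigma_n$. The constant $\lambda(M_1,\eta)$ from \cref{lem:uniform_diagonal_C_sigma} and the coefficient bounds $M_1/(4\eta)$ and $1/(8\eta)$ indeed use only $\eta$. This uniformity as $\sigma_n\searrow0$ is the point I expect to need the most care.

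\emph{Step 2: the leading term is $O(h^3)$.} Along the solution, $u''=\mathcal{L}_FF$ and $u'''=\mathcal{L}_F^2F$. Dividing \eqref{eq:G4_boundary_equality} by $F\neq0$ gives $u'''=3u''^2/u'$. Substituting into \eqref{eq:D_on_exact_sol} yields
\begin{displaymath}
    \mathcal{D}_{\sigma}\bigl(t,u(t)\bigr)
    =3u''(t)^2\left(\frac{1}{u'(t)}-\frac{u'(t)}{\sigma^2+u'(t)^2}\right)
    =\frac{3u''(t)^2\,\sigma^2}{u'(t)\bigl(\sigma^2+u'(t)^2\bigr)} .
\end{displaymath}
It follows that
\begin{displaymath}
    \bigl|\mathcal{D}_{\sigma_n}\bigl(t_n,u(t_n)\bigr)\bigr|\leq \frac{3M_2^2\sigma_n^2}{m^3}\leq \frac{3M_2^2C^2}{m^3}\,h
\end{displaymath}
by \eqref{ineq:G4_vanishing_scale_rate}. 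Hence the $h^2$ term is bounded by $\frac{M_2^2C^2}{4m^3}h^3$, and $|\ell_{n+1}|\leq C_\ell h^3$ uniformly in $n$ for $0<h\leq\min\{\varepsilon,h_2\}$.

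\emph{Step 3: conclude.} Apply \cref{thm:convergence_of_SE} with $p=3$, this $C_\ell$, and $h_\ell:=\min\{\varepsilon,h_2\}$. Hypothesis~\ref{H1} is assumed, so the theorem gives $H>0$, the unique sequence in $U_\rho(u)$, and the bound with $C_g:=2C_\ell(T-t_0)\exp\bigl(8L(T-t_0)\bigr)$.
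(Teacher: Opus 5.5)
Your proposal is correct and follows the paper's proof essentially step for step. The argument runs as follows: nondegeneracy gives hypothesis~\ref{H2} with $\eta=\min_{[t_0,T]}|u'|^2$ however small the scales are; the equality \eqref{eq:G4_boundary_equality} reduces $\mathcal{D}_{\sigma_n}\bigl(t_n,u(t_n)\bigr)$ to an $O(\sigma_n^2)=O(h)$ quantity; \cref{thm:lte}~\ref{thm:lte-2} then gives $|\ell_{n+1}|\leq C_\ell h^3$; and \cref{thm:convergence_of_SE} with $p=3$ concludes.

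The only difference is cosmetic. You write the defect as $3(u'')^2\sigma_n^2/\bigl(u'(\sigma_n^2+(u')^2)\bigr)$ and bound it using $M_2$ and $m^3$, whereas the paper writes it as $\sigma_n^2u'''/(\sigma_n^2+(u')^2)$ and bounds it by $M_3C^2h/\eta$. Your concern about uniformity as $\sigma_n\searrow0$ is already handled, since the constants in \cref{thm:lte} depend only on $\eta$ and the $M_j$.
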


\begin{proof}
    By \eqref{eq:G4_nondegeneracy_along_solution} and compactness, there exists $\eta>0$ such that
    \begin{displaymath}
        u'(t)^2 = F\bigl(t,u(t)\bigr)^2 \geq \eta
    \end{displaymath}
    for all $t \in [t_0, T]$.
    Consequently, the chosen scales $\{\sigma_n\}_{n=0}^{N_h - 1}$ satisfy hypothesis~\ref{H2}.
    By \eqref{def:normalized_curvature_defect} and \eqref{eq:G4_boundary_equality}, we have 
    \begin{displaymath}
        \left| \mathcal{D}_{\sigma_n}\bigl(t_n,u(t_n)\bigr) \right|
        =
        \left| \frac{\sigma_n^2u'''(t_n)}{\sigma_n^2+u'(t_n)^2} \right|
        \leq
        \frac{M_3 C^2}{\eta}h.
    \end{displaymath}
    It follows from \cref{thm:lte}~\ref{thm:lte-2} and \eqref{eq:curvature_defect_relation-2} that there exist constants $C_\ell>0$ and $h_\ell\in(0,\varepsilon]$ such that $|\ell_{n+1}| \leq C_\ell h^3$ whenever $0<h\leq h_\ell$ and $n=0,1,\ldots,N_h-1$.
    Applying \cref{thm:convergence_of_SE} with $p=3$ completes the proof.
\end{proof}

The following example shows that scales vanishing with the step size can yield not only third-order convergence but also higher convergence orders.

\begin{example} \label{ex:autonomous_small_sigma}
    Consider the equation \eqref{ODE:IVP} with $t_0 = 0$, $u_0 > 0$, and
    \begin{equation} \label{ODE:autonomous_small_sigma}
        F(t, u) := u^{-1}, \qquad u> 0.
    \end{equation}
    The exact solution is $u(t)= (u_0^2+2t)^{\frac{1}{2}}$ for $t \geq 0$.
    Then, 
    \begin{displaymath}
        F \mathcal{L}_F F
        =
        -\frac{1}{u^4}
        \neq0
        \qquad\text{and}\qquad
        3\left(\mathcal{L}_F F\right)^2
        =
        F\mathcal{L}_F^2 F
        =
        \frac{3}{u^6}
        >
        0.
    \end{displaymath}
    Thus, case~\ref{G4} applies at every $u>0$.  Hence, the defect has no minimizing positive scale, and its infimum is approached as
    $\sigma \searrow 0$.
    Indeed,
    \begin{displaymath}
        \inf_{\sigma > 0}
        \left|\mathcal{D}_\sigma(u)\right|
        =
        \inf_{\sigma > 0}
        \left| \frac{3\sigma^2}{u^3\left(1+\sigma^2u^2\right)}\right|
        =
        \lim_{\sigma \searrow 0} \frac{3\sigma^2}{u^3\left(1+\sigma^2u^2\right)}
        =
        0.
    \end{displaymath}
    For $\sigma_n=h^p$ with $p\geq\frac{1}{2}$, \cref{thm:G4_vanishing_scale_SE} already gives third-order convergence.  The direct argument below uses the special structure of this equation to prove the stronger order $2p+2$ for every $p>0$.
        
    Fix $T > 0$ and $p >0$.
    For every $N \in \mathbb{N}$, set $h := \frac{T}{N}$ and $\sigma_n := h^p$ for $n = 0, 1, \ldots, N-1$.
    Then SE with $\left\{\sigma_n\right\}_{n=0}^{N-1}$ satisfies the global error estimate
    \begin{equation}    \label{ineq:autonomous_small_sigma-1}
        \max_{0\leq n\leq N} | u(t_n)-u_n |
        \leq
        \frac{T}{4u_0^3}h^{2p+2}.
    \end{equation}
    Consequently, this family of SE approximations has convergence order~$2p+2$.

    We prove \eqref{ineq:autonomous_small_sigma-1}.
    Suppose that $u_n>0$.
    The function
    \begin{displaymath}
        x
        \mapsto
        x-u_n
        -
        h \, \mathcal{C}_{\sigma_n}
        \left(
            \frac{1}{x},
            \frac{1}{u_n}
        \right)
    \end{displaymath}
    is strictly increasing on $(0,\infty)$, is negative at $x=u_n$, and
    tends to infinity as $x\to\infty$.
    Hence, \eqref{eq:SE_implicit_step} uniquely determines $u_{n+1}>u_n$.
    Starting from $u_0>0$, induction shows that $u_n\geq u_0$ for every $n=0,1,\ldots,N$.
    Moreover, \cite[Lem.~A.2~(d)]{2026a_Jung} and the SE relation give
    \begin{equation} \label{ineq:autonomous_small_sigma-4}
        0
        <
        u_{n+1}-u_n
        =
        h \, \mathcal{C}_{\sigma_n} \left( \frac{1}{u_{n+1}}, \frac{1}{u_n} \right)
        \leq
        \frac{h}{u_n}
        \leq
        \frac{h}{u_0}.
    \end{equation}
    Since $u_n\geq u_0$ and $u(t_n) = \sqrt{u_0^2+2t_n} \geq u_0$, we have $u_n + u(t_n) \geq 2u_0$.
    Consequently,
    \begin{equation}
        \label{ineq:autonomous_small_sigma-2}
        |u(t_n)-u_n|
        =
        \frac{\left|u(t_n)^2-u_n^2\right|}{u(t_n)+u_n}
        \leq
        \frac{\left|u(t_n)^2-u_n^2\right|}{2u_0}.
    \end{equation}
    It therefore suffices to estimate the difference between $u(t_n)^2$ and $u_n^2$.
    The exact solution gives 
    \begin{equation}    \label{ineq:autonomous_small_sigma-3}
        u(t_{n+1})^2-u(t_n)^2 = 2h.
    \end{equation}
    Writing $A_n := \sqrt{1 + \sigma_n^2 u_{n+1}^2}$ and $B_n := \sqrt{1 + \sigma_n^2 u_n^2}$, we find that 
    \begin{align*}
        u_{n+1}^2 - u_n^2 - 2h
        &= (u_{n+1} + u_n) \left[ (u_{n+1} - u_n) - \frac{2h}{u_{n+1} + u_n} \right]    \\
        &= h (u_{n+1} + u_n) \left[ \frac{ A_n + B_n }{ u_n A_n + u_{n+1} B_n} - \frac{2}{u_{n+1} + u_n} \right] \\
        &= h (u_{n+1} + u_n) \, \frac{\sigma_n^2 ( u_{n+1} - u_n)^2}{(u_n A_n + u_{n+1} B_n)(A_n + B_n)}.
    \end{align*}
    Since $A_n, B_n \geq 1$ and $\sigma_n = h^p$, the above equality implies 
    \begin{equation}     \label{ineq:autonomous_small_sigma-5}
        0
        \leq
        u_{n+1}^2 - u_n^2 - 2h
        \leq
        \frac{\sigma_n^2 ( u_{n+1} - u_n)^2}{2} h
        = \frac{( u_{n+1} - u_n)^2}{2} h^{2p + 1}
    \end{equation}
    Combining \eqref{ineq:autonomous_small_sigma-3}, \eqref{ineq:autonomous_small_sigma-4}, and \eqref{ineq:autonomous_small_sigma-5} yields
    \begin{displaymath}
        0 \leq \left( u_{n+1}^2 - u(t_{n+1})^2 \right) - \left( u_n^2 - u(t_n)^2 \right) \leq \frac{1}{2u_0^2} h^{2p+3}
    \end{displaymath}
    for each $n$.
    Summing over $j=0,1,\ldots,n-1$ and using $u_0 = u(t_0)$ yields
    \begin{equation} \label{ineq:autonomous_small_sigma-6}
        0
        \leq
        u_n^2-u(t_n)^2
        \leq
        \frac{T}{2u_0^2}h^{2p+2}.
    \end{equation}
    Combining \eqref{ineq:autonomous_small_sigma-2} and \eqref{ineq:autonomous_small_sigma-6} implies \eqref{ineq:autonomous_small_sigma-1}.
\end{example}

\section{Choice of the scale for fourth-order accuracy}
\label{sec:scale_fourth_order}

\Cref{cor:consistency_of_SE-4th} of \cref{cor:consistency_of_SE} shows that \eqref{eq:sol_graph_4th} is sufficient for fourth-order consistency.
By \eqref{eq:curvature_defect_relation-2}, this condition is equivalent, along the exact solution, to
\begin{equation}    \label{eq:field_4th}
    \mathcal{D}_{\sigma_n}\bigl(t_n, u(t_n)\bigr) + \mathcal{D}_{\sigma_n}\bigl(t_{n+1}, u(t_{n+1})\bigr)
    =
    0.
\end{equation}
The numerical counterpart of the third-order condition \eqref{eq:field_3rd} depends only on the known pair $(t_n, u_n)$.
Hence, the scale can be chosen before solving for $u_{n+1}$.
In contrast, the numerical counterpart of \eqref{eq:field_4th} also involves the unknown value at $t_{n+1}$.
After replacing the exact solution values by numerical ones, $v$ and $\sigma$ must therefore be determined simultaneously.
This leads to the following system.
\begin{definition}
    \label{def:defect_balanced_SE}
    Given $(t_n, u_n)\in\Omega$ and $h>0$, find $v\in\mathbb{R}$ satisfying $(t_{n+1},v)\in\Omega$ and a scale $\sigma>0$ such that
    \begin{equation} \label{ODS:SE_4th_order}
        \begin{cases}
            \displaystyle
            v
            =
            u_n + h \, \mathcal{C}_\sigma \left( F(t_{n+1},v), F(t_n,u_n) \right),
            \\[1.2ex]
            \mathcal{D}_\sigma(t_n,u_n) + \mathcal{D}_\sigma(t_{n+1},v)
            =
            0.
        \end{cases}
    \end{equation}
    For any solution pair $(v,\sigma)$, set $u_{n+1}:=v$ and $\sigma_n:=\sigma$.
\end{definition}

For each fixed trial value $v$ at $t_{n+1}$, the second equation in \eqref{ODS:SE_4th_order} is a scalar equation for $\sigma$.
This equation need not admit a positive solution.
We therefore relax exact defect balance by minimizing its absolute residual.

More generally, fix points $(t,x), (s,y)$ in the domain of $F$ and consider the one-dimensional optimization problem
\begin{equation}    \label{opt:field_4th}
    \inf_{\sigma > 0} \left| \mathcal{D}_\sigma(t, x) + \mathcal{D}_\sigma(s, y) \right|.
\end{equation} 

\begin{definition}
    A minimizer $\sigma>0$ of the problem \eqref{opt:field_4th} is said to be
    \emph{optimal} if
    \begin{displaymath}
        \mathcal{D}_\sigma(t,x)
        +
        \mathcal{D}_\sigma(s,y)
        =
        0.
    \end{displaymath}
\end{definition}

For each trial value $v$ in the implicit solve of \eqref{ODS:SE_4th_order}, we evaluate the minimization problem \eqref{opt:field_4th} with $(t,x) = (t_n, u_n)$ and $(s, y) = (t_{n+1}, v)$.
If this problem has a positive minimizer, a chosen minimizer is inserted into the SE update.
If the chosen minimizer gives zero residual, it also satisfies the defect-balance equation exactly; otherwise, it provides a relaxed approximation of that equation.

\subsection{Classification based on scaled field quantities}

The following theorem classifies the positive defect-cancelling scales of \eqref{opt:field_4th} and determines whether the residual infimum is zero or positive in the remaining cases.

\begin{theorem}
    \label{thm:sigma_selection_4th_order}
    Let $\Omega\subset\mathbb{R}^2$ be open and let $F\in C^2(\Omega)$.
    Fix $(t,x),(s,y)\in\Omega$.
    Define
    \begin{align*}
        a&:=\mathcal{L}_F^2F(t,x),
        &
        b&:=3F(t,x)\bigl(\mathcal{L}_F F(t,x)\bigr)^2,
        &
        c&:=F(t,x)^2,\\
        d&:=\mathcal{L}_F^2F(s,y),
        &
        e&:=3F(s,y)\bigl(\mathcal{L}_F F(s,y)\bigr)^2,
        &
        f&:=F(s,y)^2,
    \end{align*}
    and
    \begin{align*}
        A&:= a+d,
        &
        B&:= (a+d)(c+f)-b-e,   \\
        C&:= (a+d)cf-bf-ec,
        &
        D&:= B^2 - 4AC.
    \end{align*}
    The boundary residuals are
    \begin{align*}
        R_0
        &:=
        \lim_{\sigma\searrow0} \left| \mathcal{D}_\sigma(t,x)+\mathcal{D}_\sigma(s,y) \right|, \\
        R_{\infty}
        &:= 
        \lim_{\sigma\to \infty} \left| \mathcal{D}_\sigma(t,x)+\mathcal{D}_\sigma(s,y) \right|
        =
        |A|.
    \end{align*}
    Exactly one of the following six cases occurs.
    \begin{enumerate}[label=\upshape(E\arabic*),ref=(E\arabic*),itemsep=1ex]
        \item \label{E1}
        Suppose that $A = B = C = 0$.
        Then every $\sigma>0$ is an optimal minimizer of the problem \eqref{opt:field_4th}.

        \item \label{E2}
        Suppose that $A = 0$ and $BC < 0$.
        Then the problem \eqref{opt:field_4th} has the unique optimal minimizer
        \begin{equation} \label{eq:defect_balance_linear_scale}
            \sigma^{\ast}
            =
            \left( -\frac{bf + ec}{b+e} \right)^{\frac{1}{2}}.
        \end{equation}

        \item \label{E3}
        Suppose that $A = 0$, $BC \geq 0$, and $(B, C) \neq (0, 0)$.
        Then the problem \eqref{opt:field_4th} has no positive minimizer.
        More precisely, exactly one of the following cases occurs.
        \begin{enumerate}[label=\upshape(E3\alph*), ref=(E3\alph*), itemsep=1ex]
            \item \label{E3a}
            If $cf > 0$ and $C=0$, then 
            \begin{displaymath}
                \inf_{\sigma>0} \left| \mathcal{D}_\sigma(t,x) + \mathcal{D}_\sigma(s,y) \right|
                =
                R_{\infty}
                =
                R_0
                =
                0.
            \end{displaymath}

            \item \label{E3b}
            Suppose that one of the following conditions holds:
            \begin{enumerate}[label=\upshape(E3b\arabic*),ref=(E3b\arabic*),itemsep=1ex]
                \item \label{E3b1} $cf > 0$ and $C \neq 0$;
                \item \label{E3b2} $cf = 0$.
            \end{enumerate}
            Then
            \begin{displaymath}
                \inf_{\sigma>0} \left| \mathcal{D}_\sigma(t, x) + \mathcal{D}_\sigma(s,y) \right|
                =
                R_{\infty}
                =
                0
                <
                R_0.
            \end{displaymath}
        \end{enumerate}
        
        \item \label{E4}
        Suppose that $A \neq 0$, $D > 0$, $AB < 0$, and $AC > 0$.
        Then the problem \eqref{opt:field_4th} has exactly two optimal minimizers,
        \begin{equation} \label{eq:defect_balance_two_cancelling_scales}
            \sigma_{\pm}^{\ast}
            =
            \left( \frac{-B\pm\sqrt{D}}{2A} \right)^{\frac{1}{2}}.
        \end{equation}

        \item \label{E5}
        Suppose that $A \neq 0$ and one of the following conditions holds:
        \begin{enumerate}[label=\upshape(E5\alph*),ref=(E5\alph*),itemsep=1ex]
            \item \label{E5a} $D > 0$, $AB < 0$, and $AC = 0$;
            \item \label{E5b} $D > 0$ and $AC < 0$;
            \item \label{E5c} $D = 0$, $AB < 0$.
        \end{enumerate}
        Then the problem \eqref{opt:field_4th} has the unique optimal minimizer 
        \begin{equation}
            \label{eq:defect_balance_unique_cancelling_scale}
            \sigma^{\ast}
            =
            \left( \frac{-B + \sgn(A) \sqrt{D}}{2A} \right)^{\frac{1}{2}}.
        \end{equation}

        \item \label{E6}
        Suppose that $A \neq 0$ and one of the following conditions holds:
        \begin{enumerate}[label=\upshape(\roman*)]
            \item $D > 0$, $AB > 0$, and $AC \geq 0$;
            \item $D = 0$ and $AB \geq 0$;
            \item $D < 0$.
        \end{enumerate}
        Then no positive scale makes the defect residual vanish.
        Consequently, the problem \eqref{opt:field_4th} has no optimal minimizer.
        More precisely, exactly one of the following cases occurs.

        \begin{enumerate}[label=\upshape(E6\alph*), ref=(E6\alph*), itemsep=1ex]
            \item \label{E6a}
            If $B = A(c+f)$ and $C = Acf$, then every positive scale is a minimizer of \eqref{opt:field_4th}, but none is optimal.

            \item \label{E6b}
            Suppose that one of the following conditions holds:
            \begin{enumerate}[label=\upshape(\roman*)]
                \item $cf>0$ and $C=0$;
                \item $cf=0$, $c+f>0$, and $B=0$.
            \end{enumerate}
            Then the problem \eqref{opt:field_4th} has no positive minimizer, and
            \begin{displaymath}
                \inf_{\sigma>0} \left| \mathcal{D}_\sigma(t,x) + \mathcal{D}_\sigma(s,y) \right|
                =
                R_0
                =
                0.
            \end{displaymath}
            
            \item \label{E6c}
            If neither case~\ref{E6a} nor case~\ref{E6b} applies, then
            \begin{displaymath}                
                \inf_{\sigma>0} \left| \mathcal{D}_\sigma(t,x) + \mathcal{D}_\sigma(s,y) \right|
                >
                0 
                \qquad\text{and}\qquad
                R_0 > 0.
            \end{displaymath}
        \end{enumerate}
    \end{enumerate}
\end{theorem}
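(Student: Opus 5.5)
The plan is to reduce everything to the sign and root structure of a single quadratic polynomial in $z:=\sigma^2$. By \eqref{def:normalized_curvature_defect} and \eqref{eq:norm_V}, $\mathcal{D}_\sigma(t,x)=a-\frac{b}{z+c}$ and $\mathcal{D}_\sigma(s,y)=d-\frac{e}{z+f}$. For $z>0$ we have $(z+c)(z+f)>0$, so
\begin{displaymath}
    \mathcal{D}_\sigma(t,x)+\mathcal{D}_\sigma(s,y)
    =
    \frac{N(z)}{(z+c)(z+f)},
    \qquad
    N(z):=A(z+c)(z+f)-b(z+f)-e(z+c).
\end{displaymath}
Expanding gives $N(z)=Az^2+Bz+C$ with exactly the $A,B,C$ of the statement. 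Hence the optimal minimizers of \eqref{opt:field_4th} are precisely $\sigma=\sqrt{z}$ for the positive roots $z$ of $N$. We also record two structural facts: $c=0$ forces $b=0$, and $f=0$ forces $e=0$, since $F=0$ kills the factor $3F(\mathcal{L}_FF)^2$. Finally, $R_\infty=|A|$ because $N(z)/((z+c)(z+f))\to A$.

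For the cases admitting optimal minimizers, we count positive roots of $N$.
\begin{itemize}
    \item[\ref{E1}] $N\equiv0$, so every $\sigma>0$ is optimal.
    \item[\ref{E2}] $N$ is linear with root $z=-C/B>0$. Using the identity $B=-(b+e)$ and $C=-(bf+ec)$ when $A=0$, this gives \eqref{eq:defect_balance_linear_scale}.
    \item[\ref{E4}--\ref{E6}] For $A\neq0$ we apply the standard sign analysis to $z^2+(B/A)z+C/A$: the product of the roots is $C/A$ and their sum is $-B/A$. With $D>0$: $AC>0$ and $AB<0$ give two positive roots (\ref{E4}); $AC<0$ gives exactly one positive root, namely the one with $+\sgn(A)\sqrt{D}$; $AC=0$ with $AB<0$ gives the root $-B/A>0$ together with $0$. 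With $D=0$, the double root $-B/(2A)$ is positive iff $AB<0$. Writing the positive root as $(-B+\sgn(A)\sqrt D)/(2A)$ covers \ref{E5a}--\ref{E5c} uniformly. Every remaining sign pattern has no positive root, which is \ref{E6}.
\end{itemize}
Exhaustiveness and mutual exclusivity of \ref{E1}--\ref{E6} follow because these sign conditions partition the possibilities for $(A,B,C,D)$.

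For the non-optimal cases we study the ratio $\Phi(z):=|N(z)|/((z+c)(z+f))$, which is continuous and strictly positive on $(0,\infty)$ when $N$ has no positive root. We then compute $R_0$ by cases, using the structural facts above:
\begin{itemize}
    \item if $cf>0$, then $R_0=|C|/(cf)$;
    \item if $c=0<f$, then $b=0$, so $C=0$ and $R_0=|B|/f$ (symmetrically if $f=0<c$);
    \item if $c=f=0$, then $b=e=0$, so $N=Az^2$, and $R_0=|A|$ (or, when $A=0$, $R_0=\lim|B|/z=\infty$ for $B\neq0$).
\end{itemize}
In \ref{E3}, $A=0$ gives $R_\infty=0$, and positivity of $\Phi$ shows the infimum $0$ is not attained. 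The value of $R_0$ from the list then separates \ref{E3a} ($cf>0$, $C=0$, so $R_0=0$) from \ref{E3b} ($R_0>0$).

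In \ref{E6}, the condition of \ref{E6a} means $N=A(z+c)(z+f)$, so $\Phi\equiv|A|>0$ and every scale is a nonoptimal minimizer. In \ref{E6b}, the $R_0$ computation gives $R_0=0$ while $\Phi>0$, so the infimum $0$ is not attained. Otherwise $R_0>0$ and $R_\infty=|A|>0$, so a continuous positive function with positive limits at both ends has positive infimum (\ref{E6c}).

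The main obstacle I expect is the bookkeeping in the degenerate boundary cases $cf=0$. There I must verify that the listed subcases (\ref{E3b2}, and condition (ii) of \ref{E6b}) match exactly the vanishing or nonvanishing of $R_0$, including the possibility $R_0=\infty$. I must also check that \ref{E6a} and \ref{E6b} are mutually exclusive. The latter holds since $A\neq0$: in \ref{E6a} we have $\Phi\equiv|A|>0$, whereas \ref{E6b} forces $R_0=0$. I would handle this by tabulating $(c,f)$ being zero or positive against the induced vanishing of $b,e$, and reading off $N$ near $z=0$ in each row.
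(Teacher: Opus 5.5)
Your proposal is correct and follows essentially the same route as the paper: you write the defect sum as $N(z)/((z+c)(z+f))$ with $N(z)=Az^2+Bz+C$ and $z=\sigma^2$, count the positive roots of $N$ by the sum/product sign analysis, and compute $R_0$ separately for $cf>0$ and $cf=0$ (separating (E6a) from (E6b) via the value of $R_0$, rather than via constancy of the defect sum as the paper does, is only a cosmetic difference). One small correction: when $c=f=0$ you also have $b=e=0$, hence $B=0$ and $N=Az^2$, so the scenario $R_0=\infty$ you anticipate never occurs; this row falls in (E1) if $A=0$ and in (E6a) if $A\neq0$.
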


\begin{proof}
    Define the function $\varphi : (0, \infty) \to \mathbb{R}$ by 
    \begin{displaymath}
        \varphi(\sigma)
        :=
        \mathcal{D}_\sigma(t,x)+\mathcal{D}_\sigma(s,y)
        =
        A-\frac{b}{\sigma^2+c}-\frac{e}{\sigma^2+f}.
    \end{displaymath}
    Observe that the positive zeros of $\varphi$ correspond to the positive solutions of the equation%
    \begin{equation}    \label{eq:quadratic}
        A \sigma^4 + B \sigma^2 + C = 0.
    \end{equation}

    Suppose first that $A = 0$.
    If $B = C = 0$, then $b + e = bf + ec = 0$.
    Therefore, every $\sigma>0$ is a solution of \eqref{eq:quadratic}, proving case~\ref{E1}.

    If $BC < 0$, then $B \neq 0$, and hence the equation \eqref{eq:quadratic} has the unique positive solution
    \begin{displaymath}
        \sigma = \left( -\frac{C}{B} \right)^{\frac{1}{2}} = \left( -\frac{bf + ec}{b + e} \right)^{\frac{1}{2}}.
    \end{displaymath}
    This proves case~\ref{E2}.

    To show case~\ref{E3}, assume that $BC \geq 0$ and $(B, C) \neq (0, 0)$.
    Then $B\sigma^2+C \neq 0$ for every $\sigma > 0$.
    Thus, \eqref{eq:quadratic} has no positive solution, and hence $|\varphi(\sigma)| > 0$ for every $\sigma>0$.
    Letting $\sigma\to\infty$ gives
    \begin{displaymath}
        0
        \leq
        \inf_{\sigma>0}|\varphi(\sigma)|
        \leq
        \lim_{\sigma\to\infty}|\varphi(\sigma)|
        =
        |A|
        =
        0,
    \end{displaymath}
    and hence
    \begin{displaymath}
        \inf_{\sigma>0}|\varphi(\sigma)| = R_{\infty} = 0.
    \end{displaymath}
    It remains to determine $R_0$.
    If $cf>0$, then
    \begin{displaymath}
        R_0
        =
        \lim_{\sigma \searrow 0} \left\vert -\frac{b}{\sigma^2 + c} - \frac{e}{\sigma^2 + f} \right\vert 
        =
        \left| -\frac{b}{c} - \frac{e}{f} \right|
        =
        \frac{|bf+ec|}{cf}
        =
        \frac{|C|}{cf}.
    \end{displaymath}
    Therefore, $C=0$ gives case~\ref{E3a}, while $C\neq0$
    gives case~\ref{E3b1}.

    Suppose that $cf=0$.
    The case $c=f=0$ would imply $b=e=0$ and hence $B=C=0$, which is impossible.
    Thus, $c+f>0$.
    Moreover, $c=0$ implies $b=0$, while $f=0$ implies $e=0$.
    Consequently, $C=0$ and $B\neq0$.
    Since $A = C = cf = 0$, we have
    \begin{displaymath}
        R_0
        =
        \lim_{\sigma \searrow 0} \left\vert \frac{B\sigma^2}{(\sigma^2+c)(\sigma^2+f)} \right\vert 
        =
        \lim_{\sigma \searrow 0} \left\vert \frac{B}{\sigma^2+c+f} \right\vert 
        =
        \frac{|B|}{c+f}
        >
        0.
    \end{displaymath}
    This gives case~\ref{E3b2}.
    
    Assume now that $A \neq 0$.
    We first determine the number of optimal minimizers of \eqref{opt:field_4th} from the positive solutions of \eqref{eq:quadratic}.
    Regarding \eqref{eq:quadratic} as a quadratic equation in $z := \sigma^2$, we consider the three cases: $D > 0$, $D = 0$, and $D < 0$.

    Suppose first that $D > 0$, and hence
    \begin{displaymath}
        z_{\pm} = \frac{-B \pm \sqrt{D}}{2A}, 
        \qquad
        z_+ + z_- = - \frac{B}{A},
        \qquad
        z_+ z_- = \frac{C}{A}.
    \end{displaymath}
    
    If $AB < 0$, then $z_+ + z_- > 0$.
    We now consider the three cases determined by the sign of $AC$.
    First, if $AC>0$, then $z_+z_->0$.
    Therefore, $z_+,z_->0$, and $\sigma_{\pm} = \sqrt{z_{\pm}} > 0$ are two positive solutions of \eqref{eq:quadratic}.
    This gives case~\ref{E4}.
    Second, if $AC = 0$, then $C = 0$ and $D = B^2$.
    Since $AB<0$, we have $\sgn(A)|B|=-B$.
    The quadratic equation has a unique positive root, say $z_{\ast}$, in $z$, given by
    \begin{displaymath}
        z^{\ast} = - \frac{B}{A} = \frac{-B - B}{2A} = \frac{-B + \sgn(A) |B|}{2A} = \frac{-B + \sgn(A) \sqrt{D}}{2A} > 0.
    \end{displaymath}
    Thus, $\sigma^\ast:=\sqrt{z^\ast}$ is the unique positive solution of \eqref{eq:quadratic}, proving case~\ref{E5a}.
    Third, if $AC<0$, then $z_+z_- = \frac{C}{A} < 0$.
    Hence, exactly one of $z_+$ and $z_-$ is positive.
    Since $AC<0$, $D > B^2$, which yields $-B+\sqrt D > 0$ and $-B-\sqrt D < 0$.
    Therefore, $z_+>0>z_-$ when $A>0$, while $z_->0>z_+$ when $A<0$.
    In either case, the unique positive root is
    \begin{displaymath}
        z^\ast
        =
        \frac{-B+\sgn(A)\sqrt D}{2A}.
    \end{displaymath}
    Thus, $\sigma^\ast:=\sqrt{z^\ast}$ is the unique positive solution of \eqref{eq:quadratic}, proving case~\ref{E5b}.

    If $AB = 0$, then $B = 0$, and the identity $D = -4AC > 0$ implies that $AC < 0$.
    Hence, this case is included in case~\ref{E5b}.
    In fact, in this case, the quadratic equation in $z$ has the unique positive root
    \begin{displaymath}
        z^\ast
        =
        \frac{\sqrt{-AC}}{|A|}
        =
        \sqrt{-\frac CA}
        =
        \frac{\sgn(A)\sqrt D}{2A}
        >
        0.
    \end{displaymath}
    Thus, $\sigma^\ast:=\sqrt{z^\ast}$ is the unique positive solution of \eqref{eq:quadratic}.

    Suppose next that $AB > 0$.
    If $AC < 0$, then $D > 0$, and hence this case is included in case~\ref{E5b}.
    Hence, it remains to consider $AC \geq 0$.
    Since $AB > 0$, $z_+ + z_- < 0$.
    If $AC > 0$, then $z_+z_- > 0$, and therefore $z_+,z_- < 0$.
    If $AC = 0$, then one root is zero and the other is $-\frac{B}{A}<0$.
    Thus, neither case admits a positive root in $z$.
    This gives case~\ref{E6}\textnormal{(i)}.

    Now, suppose that $D=0$.
    Then the quadratic equation in $z$ has the unique root $z = -\frac{B}{2A}$.
    If $AB < 0$, then $z > 0$, and $\sigma^\ast := \sqrt z$ is the unique positive solution of \eqref{eq:quadratic}.
    This gives case~\ref{E5c}.
    If $AB \geq 0$, then $z \leq 0$, and \eqref{eq:quadratic} has no positive solution.
    This gives case~\ref{E6}\textnormal{(ii)}.

    Finally, if $D<0$, the quadratic equation in $z$ has no real root.
    Hence, \eqref{eq:quadratic} has no positive solution.
    This gives case~\ref{E6}\textnormal{(iii)}.

   We now distinguish cases~\ref{E6a}, \ref{E6b}, and~\ref{E6c}.
    Since $\varphi$ has no zero on $(0,\infty)$, it has a fixed sign on this interval.
    
    We first consider the case in which $\varphi$ is constant.
    Since $\varphi(\sigma) \to A$ as $\sigma \to \infty$, comparison of the coefficients shows that $\varphi$ is constant precisely when $B = A(c+f)$ and $ C = Acf$.
    In this case, $\varphi(\sigma) = A$ for every $\sigma > 0$.
    Since $A \neq 0$,
    \begin{displaymath}
        \min_{\sigma>0} \left| \varphi(\sigma) \right|
        =
        |A|
        >
        0.
    \end{displaymath}
    Therefore, every positive scale is a minimizer, but none is optimal.
    This proves case~\ref{E6a}.

    It remains to consider the case in which $\varphi$ is nonconstant.
    Note that 
    \begin{displaymath}
        0
        \leq
        \inf_{\sigma > 0} |\varphi(\sigma)|
        \leq
        \lim_{\sigma \searrow 0} |\varphi(\sigma)|
        =
        R_0.
    \end{displaymath}
    It remains to determine when $R_0 = 0$.
    First, suppose that $cf>0$.
    Then
    \begin{displaymath}
        R_0
        =
        \left| A-\frac{b}{c}-\frac{e}{f} \right|
        =
        \frac{|Acf-bf-ec|}{cf}
        =
        \frac{|C|}{cf}.
    \end{displaymath}
    Hence, $R_0 = 0$ precisely when $C = 0$.
    Second, suppose that $cf = 0$ and $c + f > 0$.
    Then either $c = 0$ or $f = 0$.
    Since $c = 0$ implies $b = 0$ and $f = 0$ implies $e = 0$, we have $C = 0$.
    Therefore,
    \begin{displaymath}
        R_0
        =
        \lim_{\sigma \searrow 0} \left| \frac{A\sigma^4 + B\sigma^2}{(\sigma^2+c)(\sigma^2+f)} \right|
        =
        \lim_{\sigma \searrow 0} \left| \frac{A\sigma^2 + B} {\sigma^2+c+f} \right|
        =
        \frac{|B|}{c + f}.
    \end{displaymath}
    Hence, $R_0 = 0$ precisely when $B  = 0$.
    Thus, the two alternatives in case~\ref{E6b} are exactly the cases in which $R_0 = 0$.
    The preceding inequality gives the stated value of the infimum.
    Since $\varphi$ has no zero on $(0,\infty)$, this infimum is not attained at any positive scale.
    This proves case~\ref{E6b}.

    Suppose finally that neither case~\ref{E6a} nor case~\ref{E6b} applies.
    Then the preceding formulas give $R_0>0$.
    Moreover, $R_\infty>0$ since $A\neq0$.
    Suppose, to the contrary, that
    \begin{displaymath}
        \inf_{\sigma > 0} |\varphi(\sigma)|
        =
        0.
    \end{displaymath}
    Then there exists a sequence $\{\sigma_k\}_{k=1}^{\infty}$ of positive scales such that $|\varphi(\sigma_k)|\to0$ as $k\to\infty$.
    The positivity of $R_0$ and $R_\infty$ implies that, after discarding finitely many terms, the sequence is bounded above and bounded away from zero.
    Passing to a subsequence, we may assume that $\sigma_k\to\sigma^{\ast}$ as $k\to\infty$ for some $\sigma^{\ast}>0$.
    By the continuity of $\varphi$, $\varphi(\sigma^\ast)=0$.
    Since $\sigma^\ast>0$, this contradicts the fact established above that $\varphi$ has no zero on $(0,\infty)$.
    Therefore,
    \begin{displaymath}
        \inf_{\sigma>0} \left| \varphi(\sigma) \right|
        >
        0.
    \end{displaymath}
    This proves case~\ref{E6c} and completes the proof.
\end{proof}

In cases~\ref{E1}, \ref{E6a}, and~\ref{E6c}, we use an arbitrary fixed positive scale.
Under the assumptions of \cref{cor:convergence_orders_SE}, this choice gives second-order convergence.
In case~\ref{E3}, the residual infimum is approached as $\sigma\to\infty$.
By \eqref{eq:C_infty}, the limiting method is CN.
We therefore do not consider these cases further. 

As in the third-order analysis, we focus on nontrivial positive optimal scales and scales vanishing with the step size.
Nontrivial positive optimal scales are obtained in cases~\ref{E2}, \ref{E4}, and~\ref{E5}.
After setting aside case~\ref{E3} as above, case~\ref{E6b} is the only remaining case without a positive optimal scale in which the small-scale residual vanishes.
Before considering the main cases, we identify how the third- and fourth-order classifications match.

\begin{remark}  \label{rmk:3rd_4th_order_classification_matchings}
    The fourth- and third-order classifications can be matched by setting $(s,y)=(t,x)$, which gives $A = 2a$, $B = 2(2ac - b)$, $C = 2c(ac - b)$, and $D = 4b^2$.
    Using these identities, we obtain the following matching.
    Case~\ref{G1} gives case~\ref{E1} when $a=0$ and case~\ref{E6a} when $a \neq 0$.
    Case~\ref{G2} gives case~\ref{E3b1} when $F\mathcal{L}_F^2F=0$ and case~\ref{E6c} when $F\mathcal{L}_F^2F < 0$.
    Case~\ref{G3} gives case~\ref{E5b}.
    Case~\ref{G4} gives case~\ref{E6b}\textnormal{(i)} in the equality case and case~\ref{E6c} in the strict inequality case.
    In particular, cases~\ref{E2}, \ref{E3a}, \ref{E3b2}, \ref{E4}, \ref{E5a}, \ref{E5c}, and~\ref{E6b}\textnormal{(ii)} do not occur when $(s,y) = (t,x)$.
    
    We prove only the matching between case~\ref{G3} and case~\ref{E5b}.
    Suppose that case~\ref{G3} applies.
    Then $A \neq 0$, $D > 0$, and
    \begin{displaymath}
        AC
        =
        4F^2 \bigl(F\mathcal{L}_F^2 F\bigr) \left[ F\mathcal{L}_F^2 F - 3(\mathcal{L}_F F)^2 \right]
        <
        0.
    \end{displaymath}
    Thus, case~\ref{E5b} applies.
    The converse follows from the same calculation.

    Moreover, $A$ and $b$ have the same sign, and hence $\sgn(A)\sqrt D=2b$.
    Therefore,
    \begin{displaymath}
        (\sigma^\ast)^2
        =
        \frac{-B + \sgn(A)\sqrt D}{2A}
        =
        \frac{b}{a}-c
        =
        \frac{3F(\mathcal{L}_F F)^2}{\mathcal{L}_F^2 F} - F^2.
    \end{displaymath}
    Hence, the minimizing scales in cases~\ref{G3} and~\ref{E5b} agree.
    The remaining matchings follow from similar calculations.
\end{remark}

\subsection{Optimal scale}

We now focus on cases~\ref{E2}, \ref{E4}, and~\ref{E5} in \cref{thm:sigma_selection_4th_order}.
Among these cases, we establish convergence only for cases~\ref{E5a} and~\ref{E5b}.

In case~\ref{E2}, $A=0$, and hence $B = -(b + e)$ and $C=-(bf + ec)$.
If $F$ has the same nonzero sign at the two points, then $b$ and $e$ have the same sign.
Hence, $BC \geq 0$, which contradicts $BC < 0$.
Thus, case~\ref{E2} can occur only on a step across which $F$ changes sign.

In case~\ref{E4}, the sum of the two defects has two positive zeros.
By Rolle's theorem, there exists $\sigma>0$ such that
\begin{displaymath}
    \frac{b}{(\sigma^2+c)^2}
    +
    \frac{e}{(\sigma^2+f)^2}
    =
    0.
\end{displaymath}
Hence, $be<0$.
Therefore, case~\ref{E4} can also occur only on a step across which $F$ changes sign.
Consequently, neither case can hold for all sufficiently close pairs in a neighborhood of the exact solution.

Moreover, the condition $A=0$ in case~\ref{E2} need not be preserved when the second point varies during the implicit solve, while case~\ref{E4} gives two positive optimal scales and requires an additional branch selection.
The convergence argument used in this paper does not apply to these cases, and establishing convergence would require different assumptions.
We therefore do not establish separate convergence results for these two cases.

In case~\ref{E5c}, $D=0$, and hence the two roots in $\sigma^2$ coincide at the same positive value.
Consequently, the scale selection need not be locally Lipschitz, and a small perturbation may produce either two positive roots or no positive root.
Therefore, the convergence argument used for cases~\ref{E5a} and~\ref{E5b} does not apply to this case.

We now establish convergence in cases~\ref{E5a} and~\ref{E5b}.

\begin{theorem}[Fourth-order convergence in cases~\ref{E5a} and~\ref{E5b}]
    \label{thm:SE4}
    Suppose that hypothesis~\ref{H1} holds, with $u\in C^5([t_0,T])$ and $F\in C^3(\Omega)$.
    Suppose further that there exists $\varepsilon_0>0$ such that, whenever $(t,x),(s,y)\in U_\rho(u)$ and $0\leq s-t\leq \varepsilon_0$, either case~\ref{E5a} or case~\ref{E5b} of \cref{thm:sigma_selection_4th_order} applies.
    Define
    \begin{equation}
        \label{eq:E5_selected_scale}
        \Sigma(t, x; s, y)
        :=
        \left( \frac{-B+\sgn(A)\sqrt D}{2A} \right)^{\frac{1}{2}}
    \end{equation}
    for every such pair of points, where $A$, $B$, and $D$ are the corresponding coefficients in \cref{thm:sigma_selection_4th_order}.
    Consider SE with
    \begin{equation}
        \label{eq:SE4}
        \sigma_n
        :=
        \Sigma(t_n,u_n;t_{n+1},u_{n+1}),
        \qquad
        n=0,1,\ldots,N_h-1.
    \end{equation}
    Then there exist constants $H>0$ and $C_g>0$ such that, for every $h\in(0,H]$, there exists a unique sequence $\{u_n\}_{n=0}^{N_h}$ satisfying \eqref{mth:SE} with \eqref{eq:SE4} and $\{(t_n,u_n)\}_{n=0}^{N_h}\subseteq U_\rho(u)$, and this sequence satisfies
    \begin{equation}
        \label{ineq:SE4}
        \max_{0\leq n\leq N_h}|e_n|
        \leq
        C_gh^4.
    \end{equation}
\end{theorem}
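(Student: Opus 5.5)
The proof will follow the template of \cref{thm:third_order_SE}: compare the computable scale with an ``exact'' scale evaluated along the solution, and feed the resulting perturbed local truncation error bound into \cref{lem:lte_to_global_error_SE}. The new feature is that the scale now depends on the unknown $v=u_{n+1}$. The first step is to establish regularity of $\Sigma$ on the set
\[
P:=\bigl\{((t,x),(s,y)) : (t,x),(s,y)\in U_\rho(u),\ 0\le s-t\le\varepsilon_0\bigr\}.
\]
In cases~\ref{E5a} and~\ref{E5b} we have $A\neq0$ and $D>0$, and the selected root $z^\ast=(-B+\sgn(A)\sqrt D)/(2A)$ is strictly positive. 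Since $F\in C^3(\Omega)$, the coefficients $a,\dots,f$ are $C^1$, and hence so are $A,B,C,D$. On the compact set $P$, continuity and the case assumption should give uniform lower bounds $|A|\ge c_A>0$, $D\ge c_D>0$, and $z^\ast\ge\underline{\sigma}^2>0$. (If compactness only gives pointwise positivity, I would shrink $\rho$ slightly.) Consequently $\Sigma$ is $C^1$ on $P$, is bounded below by $\underline\sigma$, and is Lipschitz in $(x,y)$ with some constant $L_\Sigma$. In particular, hypothesis~\ref{H2} holds with $\eta=\underline\sigma^2$ for any scales taking values of $\Sigma$.

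Next I define the exact scales $\sigma_n^\ast:=\Sigma(t_n,u(t_n);t_{n+1},u(t_{n+1}))$, which is allowed once $h\le\varepsilon_0$. By construction $\mathcal D_{\sigma_n^\ast}(t_n,u(t_n))+\mathcal D_{\sigma_n^\ast}(t_{n+1},u(t_{n+1}))=0$, because $\Sigma$ is an optimal minimizer in \cref{thm:sigma_selection_4th_order}. By \eqref{eq:curvature_defect_relation-2} this is exactly \eqref{eq:sol_graph_4th}. Then \cref{thm:lte}~\ref{thm:lte-3} (equivalently \cref{cor:consistency_of_SE}~\ref{cor:consistency_of_SE-4th}) gives $|\ell^\ast_{n+1}|\le C^\ast h^4$ uniformly in $n$.

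For a trial value $v$ with $(t_{n+1},v)\in U_\rho(u)$, set $\sigma=\Sigma(t_n,u_n;t_{n+1},v)$. Then
\[
|\sigma-\sigma_n^\ast|\le L_\Sigma\bigl(|e_n|+|v-u(t_{n+1})|\bigr).
\]
By \cref{lem:scale_sensitivity_along_solution}, the scale change perturbs $\mathcal C_\sigma(u'(t_{n+1}),u'(t_n))$ by at most $\frac{M_1M_2^2}{2\underline\sigma^3}h^2|\sigma-\sigma_n^\ast|$. This yields the perturbed local bound
\[
|\ell_{n+1}|\le C^\ast h^4+Qh^2\bigl(|e_n|+|v-u(t_{n+1})|\bigr),
\]
which is the form of \eqref{ineq:lte_to_global_error_SE-1} with $p=4$. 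I will take $\Sigma_n(t,x;s,y):=\Sigma(t,x;s,y)$ in \cref{lem:lte_to_global_error_SE}.

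The main obstacle is the Lipschitz condition \eqref{ineq:lte_to_global_error_SE-0}, because the implicit map now depends on $v$ through both arguments of $\mathcal C_\sigma$ and through $\sigma$. Unique solvability and contraction of the fixed-point map $v\mapsto u_n+h\,\mathcal C_{\Sigma(t_n,u_n;t_{n+1},v)}(F(t_{n+1},v),F(t_n,u_n))$ therefore require controlling the derivative of $\mathcal C_\sigma$ with respect to $\sigma$. I would handle this with the $\sigma$-sensitivity estimate of \cref{lem:scale_sensitivity_along_solution} (or its general analogue in \cref{apx:C_sigma}): the $\sigma$-derivative is bounded by a constant times $|\alpha-\beta|^2/\underline\sigma^3$. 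On the tube, $|\alpha-\beta|$ is bounded by $|F(t_{n+1},v)-F(t_n,u_n)|=O(h+\rho)$, so the $v$-Lipschitz constant is $2L+L_\Sigma\cdot O(1)$, which is some finite $\Lambda$. Since it gets multiplied by $h$, the map is a contraction for small $h$. I will therefore verify \eqref{ineq:lte_to_global_error_SE-0} with $\Lambda=2L+c\,L_\Sigma/\underline\sigma^3$, assuming the lemma allows the scale to depend on the new point (as the $\Sigma_n(t,x;s,y)$ notation suggests). With both conditions verified, \cref{lem:lte_to_global_error_SE} gives the unique sequence in $U_\rho(u)$ and \eqref{ineq:SE4}.
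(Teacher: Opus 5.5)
Your proposal is correct and follows the paper's proof essentially step for step. The paper likewise uses compactness of the pair set to get $\underline{\sigma}$ and $L_\Sigma$, defect balance plus \cref{thm:lte}~\ref{thm:lte-3} to get $|\ell_{n+1}^\ast|\leq C^\ast h^4$, \cref{lem:scale_sensitivity_along_solution} to produce the $Qh^2\bigl(|e_n|+|v-u(t_{n+1})|\bigr)$ term, and \cref{lem:Lipschitz_C_sigma,lem:scale_sensitivity_C_sigma} to get a Lipschitz constant of the form $\Lambda=2L+cL_\Sigma/\underline{\sigma}^3$ before invoking \cref{lem:lte_to_global_error_SE}. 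The only differences are cosmetic: the paper bounds $|\alpha+\beta|\,|\alpha-\beta|^2\leq 8M_F^3$ with $M_F:=\max_{U_\rho(u)}|F|$, giving $c=2M_F^3$, and no shrinking of $\rho$ is needed because $W_{\rho,\varepsilon_0}(u)$ is already compact.
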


\begin{proof}
    The set $W_{\rho,\varepsilon_0}(u)$ defined by \eqref{eq:W_rho_epsilon} is compact.
    By the assumption and $F\in C^3(\Omega)$, $\Sigma$ defined in \eqref{eq:E5_selected_scale} is well defined, strictly positive, and $C^1$ on an open neighborhood of this set.
    Compactness therefore yields constants $\underline{\sigma}>0$ and $L_\Sigma\geq0$ such that
    \begin{equation}
        \label{ineq:SE4-1}
        \underline{\sigma}
        \leq
        \Sigma(t,x;s,y)
        \qquad\text{and}\qquad
        |\Sigma(t,x;s,y)-\Sigma(t,\widetilde x;s,\widetilde y)|
        \leq
        L_\Sigma\bigl(|x-\widetilde x|+|y-\widetilde y|\bigr)
    \end{equation}
    whenever $(t, x),(t, \widetilde x), (s,y), (s,\widetilde y) \in U_\rho(u)$ and $0\leq s-t\leq \varepsilon_0$.
    Moreover, since the minimizer in each of cases~\ref{E5a} and~\ref{E5b} is optimal, 
    \begin{equation}
        \label{eq:SE4-2}
        \mathcal{D}_{\Sigma(t, x; s, y)}(t, x)
        +
        \mathcal{D}_{\Sigma(t, x; s, y)}(s, y)
        =
        0
    \end{equation}
    for all such pairs $(t, x)$ and $(s, y)$.

    Let $\{\ell_{n+1}^{\ast}\}_{n=0}^{N_h-1}$ denote the local truncation errors of SE with the exact scale sequence $\{\sigma_n^{\ast}\}_{n=0}^{N_h-1}$, where
    \begin{align*}
        \sigma_n^{\ast}
        &:=
        \Sigma\bigl(t_n,u(t_n);t_{n+1},u(t_{n+1})\bigr),\\
        \ell_{n+1}^{\ast}
        &:=
        \frac{u(t_{n+1})-u(t_n)}{h}
        -
        \mathcal{C}_{\sigma_n^{\ast}}
        \left(u'(t_{n+1}),u'(t_n)\right),
    \end{align*}
    for $n=0,1,\ldots,N_h-1$.
    Since $\sigma_n^\ast\geq\underline{\sigma}$, the exact scales $\{\sigma_n^\ast\}_{n=0}^{N_h-1}$ satisfy hypothesis~\ref{H2} with $H=\varepsilon_0$ and $\eta=\underline{\sigma}^2$.
    Therefore, by \cref{thm:lte}~\ref{thm:lte-3} and \eqref{eq:curvature_defect_relation-2}, there exist constants $C^{\ast}>0$ and $h^{\ast}>0$, independent of $h$ and $n$, such that
    \begin{equation}
        \label{ineq:SE4-3}
        |\ell_{n+1}^{\ast}|
        \leq
        \frac{h^2}{24} \left| \mathcal{D}_{\sigma_n^\ast}\bigl(t_n,u(t_n)\bigr) + \mathcal{D}_{\sigma_n^\ast}\bigl(t_{n+1},u(t_{n+1})\bigr) \right| + C^{\ast}h^4
        =
        C^{\ast}h^4
    \end{equation}
    whenever $0<h\leq\min\{h^\ast,\varepsilon_0\}$ and $n=0,1,\ldots,N_h-1$.
    The last equality follows from \eqref{eq:SE4-2}.
    
    Since $F$ is continuous on the compact set $U_{\rho}(u)$, there exists $M_F > 0$ such that $|F(t, x)| \leq M_F$ for every $(t,x)\in U_\rho(u)$.
    Define
    \begin{displaymath}
        Q
        :=
        \frac{M_1M_2^2L_\Sigma}{2\underline{\sigma}^3}
        \qquad\text{and}\qquad
        \Lambda
        :=
        2L+\frac{2M_F^3L_\Sigma}{\underline{\sigma}^3}.
    \end{displaymath}
    For every $n$, define $\Sigma_n(t,x;s,y) := \Sigma(t,x;s,y)$, and let $\mathcal{N}_n$ and $\mathcal{E}_n$ be the corresponding functions in \cref{lem:lte_to_global_error_SE}.

    Fix $h\in(0,\min\{h^\ast,\varepsilon_0\}]$ and $n\in\{0,1,\ldots,N_h-1\}$ for which $\{(t_j,u_j)\}_{j=0}^{n} \subseteq U_\rho(u)$.
    For every $(t_{n+1}, v), (t_{n+1}, w)\in U_{\rho}(u)$, \cref{lem:Lipschitz_C_sigma,lem:scale_sensitivity_C_sigma}, hypothesis~\ref{H1}, and \eqref{ineq:SE4-1} give
    \begin{align*}
        &\left| \, \mathcal{N}_n(t_n,u_n;t_{n+1},v) - \mathcal{N}_n(t_n,u_n;t_{n+1},w) \right| \\
        \leq &
        \, 2L|v-w|  + \frac{2M_F^3}{\underline{\sigma}^3} \left| \Sigma_n(t_n, u_n; t_{n+1}, v) - \Sigma_n(t_n, u_n; t_{n+1}, w) \right| \\
        \leq &
        \, \Lambda|v-w|.
    \end{align*}
    Thus, \eqref{ineq:lte_to_global_error_SE-0} is satisfied.
    
    For every $(t_{n+1},v)\in U_\rho(u)$, \eqref{ineq:SE4-1} gives
    \begin{equation}    \label{ineq:SE4-7}
        \left| \Sigma_n(t_n, u_n; t_{n+1}, v) - \sigma_n^\ast \right|
        \leq 
        L_\Sigma \left( |e_n|+|v-u(t_{n+1})| \right). 
    \end{equation}
    Moreover, $\Sigma_n(t_n,u_n;t_{n+1},v),\sigma_n^\ast\geq\underline{\sigma}$ by \eqref{ineq:SE4-1}.
    Applying \cref{lem:scale_sensitivity_along_solution} with $t=t_{n+1}$, $s=t_n$, $\sigma=\underline{\sigma}$, $\mu=\Sigma_n(t_n,u_n;t_{n+1},v)$, and $\nu=\sigma_n^\ast$, together with \eqref{ineq:SE4-7}, gives
    \begin{align}
        &\left| \mathcal{C}_{\Sigma_n(t_n,u_n;t_{n+1},v)} \left(u'(t_{n+1}),u'(t_n)\right) - \mathcal{C}_{\sigma_n^\ast} \left(u'(t_{n+1}),u'(t_n)\right) \right| \nonumber\\
        \leq &
        \, \frac{M_1M_2^2}{2\underline{\sigma}^3} h^2 \left| \Sigma_n(t_n, u_n; t_{n+1}, v) - \sigma_n^\ast \right| \nonumber\\
        \leq &    
        \, Q h^2 \left( |e_n| + |v-u(t_{n+1})| \right).
        \label{ineq:SE4-8}
    \end{align}
    By the definitions of $\ell_{n+1}^\ast$ and $\mathcal{E}_n$,
    \begin{align*}
        & \frac{u(t_{n+1})-u(t_n)}{h} - \mathcal{E}_n(t_n, u_n; t_{n+1}, v)
        \\
        =& 
        \, \ell_{n+1}^\ast 
        + \mathcal{C}_{\sigma_n^\ast} \left(u'(t_{n+1}),u'(t_n)\right) 
        - \mathcal{C}_{\Sigma_n(t_n,u_n;t_{n+1},v)} \left(u'(t_{n+1}), u'(t_n)\right).
    \end{align*}
    Combining the preceding identity, \eqref{ineq:SE4-3}, and \eqref{ineq:SE4-8} gives
    \begin{displaymath}
        \left| \frac{u(t_{n+1}) - u(t_n)}{h} - \mathcal{E}_n(t_n, u_n; t_{n+1}, v) \right|
        \leq
        C^\ast h^4 + Q h^2 \left( |e_n| + |v - u(t_{n+1})| \right).
    \end{displaymath}
    Thus, \eqref{ineq:lte_to_global_error_SE-1} is satisfied with $p = 4$, $P=C^\ast$, and $\varepsilon=\min\{h^\ast, \varepsilon_0\}$.
    
    Therefore, \cref{lem:lte_to_global_error_SE} gives a unique sequence satisfying \eqref{eq:lte_to_global_error_SE-5} and \eqref{ineq:SE4}.
    By the definitions of $\mathcal{N}_n$ and $\Sigma_n$, \eqref{eq:lte_to_global_error_SE-5} is \eqref{mth:SE} with \eqref{eq:SE4}.
    Moreover, \eqref{eq:SE4-2} shows that $(u_{n+1},\sigma_n)$ satisfies the second equation in \eqref{ODS:SE_4th_order}.
\end{proof}

We next illustrate the scale-selection procedure.

\begin{example}[Nonlinear pendulum equation] \label{ex:pendulum_defect_balance_scale}
    Consider the equation
    \begin{displaymath}
        \ddot q + \sin q=0,
    \end{displaymath}
    where $q:\mathbb{R}\to\mathbb{R}$ denotes the angular displacement and the dot denotes differentiation with respect to time.
    Fix a turning angle $\theta \in\left(0, \frac{\pi}{2} \right]$ and consider the corresponding periodic orbit with turning points $q=\pm\theta$.
    On its positive-velocity branch, introduce the normalized phase variables
    \begin{displaymath}
        q = \theta \xi 
        \qquad\text{and}\qquad
        \dot q = \theta u(\xi),
    \end{displaymath}
    where $\xi \in (-1, 1)$ and $u(\xi) > 0$.
    The corresponding normalized phase equation is
    \begin{equation}
        \label{ODE:normalized_pendulum_phase}
        F(\xi,u)
        =
        -\frac{\sin(\theta\xi)}{\theta u},
        \qquad
        (\xi,u)\in(-1,1)\times(0,\infty).
    \end{equation}
    The exact positive-velocity phase branch is given by
    \begin{displaymath}
        u(\xi)
        =
        \frac{\sqrt{2}}{\theta}
        \sqrt{\cos(\theta\xi)-\cos\theta},
        \qquad
        \xi\in(-1,1),
    \end{displaymath}
    where the initial value is chosen as $u_0 := u(0)$.
    Its restriction to $\xi\in[0,1)$ is the solution of \eqref{ODE:IVP} with $t=\xi$ and $t_0=0$.

    Although the original orbit collapses to the equilibrium $(0, 0)$, the normalized positive-velocity phase branch converges to the upper unit semicircle as $\theta \searrow 0$.
    Indeed, for every fixed $\xi \in(-1,1)$ and $v > 0$, 
    \begin{displaymath}
        \lim_{\theta \searrow 0} F(\xi, v) 
        =
        -\frac{\xi}{v}
        \qquad\text{and}\qquad
        \lim_{\theta \searrow 0} u(\xi)
        =
        \sqrt{1-\xi^2}.
    \end{displaymath}
    In other words, the normalized pendulum problem in \eqref{ODE:normalized_pendulum_phase} reduces to the unit-circle case in \cref{ex:SE_trace} as $\theta \searrow 0$.
    Thus, we expect SE to be particularly accurate for small $\theta$.

    We next determine which cases of \cref{thm:sigma_selection_4th_order} apply to pairs of exact points on the normalized positive-velocity phase branch.
    A direct calculation with
    $\mathcal{L}_F=\partial_\xi+F\partial_u$ gives
    \begin{equation}    \label{eq:normalized_pendulum-1}
        \mathcal{L}_F F = -\frac{\mathsf{P}}{\theta^2u^3}
        \qquad\text{and}\qquad
        \mathcal{L}_F^2 F = -\frac{\sin(\theta\xi)\mathsf{Q}}{\theta^3u^5},
    \end{equation}
    where
    \begin{align*}
        \mathsf{P}(\xi,u)
        &:=
        \theta^2\cos(\theta\xi)u^2 + \sin^2(\theta\xi),
        \\
        \mathsf{Q}(\xi,u)
        &:=
        -\theta^4u^4 + 3\theta^2 u^2 \cos(\theta\xi) + 3\sin^2(\theta\xi), \\
        \mathsf{R}(\xi,u)
        &:=
        \theta^2u^2 \left( 3\cos^2(\theta\xi) + \sin^2(\theta\xi) \right) + 3\cos(\theta\xi)\sin^2(\theta\xi).
    \end{align*}
    These quantities satisfy
    \begin{equation}
        \label{eq:pendulum_PQR_identity}
        3\mathsf{P}^2
        =
        \sin^2(\theta\xi)\mathsf{Q}
        +
        \theta^2u^2\mathsf{R}.
    \end{equation}

    If $u$ is a solution of \eqref{ODE:normalized_pendulum_phase}, then for $\xi \in [0, 1)$, we have
    \begin{displaymath}
        u(\xi) > 0, \qquad
        \mathsf{Q}\bigl(\xi,u(\xi)\bigr)
        \geq
        2(1-\cos\theta)(1+2\cos\theta)
        >
        0, \qquad
        \mathsf{R}\bigl(\xi,u(\xi)\bigr) > 0.
    \end{displaymath}

    Fix $\xi, \eta \in [0, 1)$  with $\xi < \eta$.
    Let $a, b, c, d, e, f, A, B, C, D$ be the corresponding coefficients in \cref{thm:sigma_selection_4th_order}.
    By \cref{eq:normalized_pendulum-1,eq:pendulum_PQR_identity},
    \begin{align*}
        a
        &=
        -\frac{\sin(\theta\xi)\mathsf{Q}(\xi, u(\xi))}{\theta^3 u(\xi)^5},
        &
        c
        &=
        \frac{\sin^2(\theta\xi)}{\theta^2 u(\xi)^2},
        &
        ac - b
        &=
        \frac{\sin(\theta\xi)\mathsf{R}(\xi, u(\xi))}
        {\theta^3 u(\xi)^5},
        \\
        d
        &=
        -\frac{\sin(\theta\eta)\mathsf{Q}(\eta, u(\eta))}{\theta^3 u(\eta)^5},
        &
        f
        &=
        \frac{\sin^2(\theta\eta)}{\theta^2 u(\eta)^2},
        &
        df-e
        &=
        \frac{\sin(\theta\eta)\mathsf{R}(\eta, u(\eta))}
        {\theta^3 u(\eta)^5}.
    \end{align*}
    If $\xi=0$, then $a=b=c=0$, and hence
    \begin{displaymath}
        A=d<0,
        \qquad
        B=df-e>0,
        \qquad
        C=0,
        \qquad
        D=B^2>0.
    \end{displaymath}
    Thus, case~\ref{E5a} applies.
    If $\xi>0$, then
    \begin{displaymath}
        A=a+d<0, \qquad
        C=f(ac-b)+c(df-e)>0, \qquad
        D > 0,
    \end{displaymath}
    and hence case~\ref{E5b} applies.
    In either case, the unique optimal scale is given by \eqref{eq:defect_balance_unique_cancelling_scale}.
    The corresponding numerical results are presented in \cref{fig:pendulum_order_comparison}.
\end{example}

\subsection{Scales vanishing with the step size}

We next examine the two alternatives in case~\ref{E6b}, neither of which yields a separate convergence result within the framework of this paper.

Suppose that case~\ref{E6b}\textnormal{(i)} holds for the exact points $(t, u(t))$ and $(t + h, u(t + h))$ whenever $t$ and $t + h$ belong to an interval and $h>0$ is sufficiently small.
Letting $h\searrow0$ in the condition $C=0$ gives
\begin{displaymath}
    3\bigl(\mathcal{L}_F F(t,u(t))\bigr)^2
    =
    F(t, u(t))\mathcal{L}_F^2 F(t, u(t)).
\end{displaymath}
Along the exact solution, this equality becomes $3\bigl(u''(t)\bigr)^2 = u'(t)u'''(t)$.
Since $cf>0$, we have $u'(t)\neq0$ throughout the interval, and hence
\begin{displaymath}
    \frac{d^2}{dt^2}\left(u'(t)^{-2}\right)
    =
    0.
\end{displaymath}
Consequently, on this interval, the exact solution is either a nonconstant affine function or a non-affine function of the form
\begin{displaymath}
    u(t)
    =
    C_1+C_2\sqrt{C_3t+C_4},
\end{displaymath}
for some constants $C_1,C_2,C_3,C_4\in\mathbb R$.
The affine family corresponds to case~\ref{E1}; case~\ref{E6b}\textnormal{(i)} therefore leaves only the non-affine square-root family.
Up to translations and scalings, this family is represented by the inverse equation in \cref{ex:autonomous_small_sigma} and has limited practical scope.
This reduction is consistent with \cref{rmk:3rd_4th_order_classification_matchings}, which identifies case~\ref{E6b}\textnormal{(i)} with the equality case of~\ref{G4} after setting $(s,y)=(t,x)$.

Case~\ref{E6b}\textnormal{(ii)} requires $cf=0$ and $c+f>0$, and therefore exactly one of $F(t,x)$ and $F(s,y)$ vanishes.
This condition cannot hold for every sufficiently close exact pair on an interval.
Indeed, if $F\bigl(t, u(t)\bigr) \neq 0$ at some $t$, then applying $cf=0$ to $(t,u(t))$ and $(t+h,u(t+h))$ forces $F\bigl(t+h,u(t+h)\bigr)=0$ for every sufficiently small $h>0$, contradicting the continuity of the mapping $t\mapsto F(t, u(t))$ as $h\searrow0$.
On the other hand, if $F\bigl(t,u(t)\bigr)=0$ throughout the interval, then $c+f=0$, which also contradicts case~\ref{E6b}\textnormal{(ii)}.
Thus, case~\ref{E6b}\textnormal{(ii)} cannot provide a positive scale-selection function satisfying the uniform bounds in \eqref{ineq:SE4-1} together with the defect-balance condition \eqref{eq:SE4-2}.
A convergence result would require a different, step-dependent construction of $\sigma_n$, which is not pursued here.

\section{Numerical results}
\label{sec:numerical}

This section presents numerical illustrations of exact tracing and of the scale-selection results developed above.
In the numerical comparisons, we use the following names.

\begin{definition}
    We call SE with $\sigma_n=1$ for every $n$ the \emph{second-order specular ellipse method} (SE2).
    We call SE with scales $\sigma_n$ satisfying either \eqref{eq:SE3} or \eqref{ineq:G4_vanishing_scale_rate} the \emph{third-order specular ellipse method} (SE3).
    We call SE with scales $\sigma_n$ satisfying \eqref{eq:SE4} the \emph{fourth-order specular ellipse method} (SE4).
\end{definition}

We abbreviate the Crank--Nicolson method as CN and the third- and fourth-order Runge--Kutta methods as RK3 and RK4, respectively.
All experiments were performed using Python 3.14 and the Python package \texttt{specular-differentiation} \cite{2026s_Jung_SIAM}.

\subsection{Exact tracing}

We compare SE and CN for the equation in \cref{ex:SE_trace} by setting $p=2.25$ and $q=0$ and considering the solution \eqref{eq:ellipse_sol} on the upper elliptic branch.
Both experiments use the prescribed step size $h=0.3$ on $[0,4.5]$, with the initial value taken from the exact solution at the left mesh endpoint.
In \cref{fig:ellipse_exactness}, SE uses $\sigma_n=\frac{b}{a}$ in both panels.
The left panel has $a = b = 2.26$, for which SE is SE2, while the right panel has $(a, b) = (2.26, 1.5)$.
The SE approximations coincide with the exact solution at every mesh point in both cases, whereas the CN approximations do not.

\begin{figure}[htbp]
    \centering
    \includegraphics[width=1\textwidth]{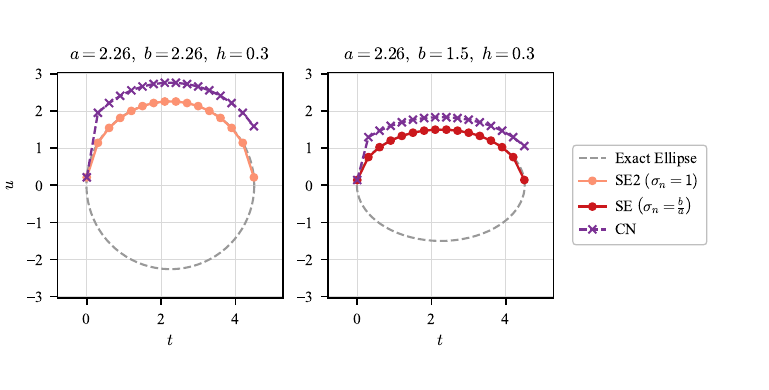}
    \par\vspace{-10pt}
    \caption{Numerical trajectories for the elliptic solutions of the problem in \cref{ex:SE_trace}.}
    \label{fig:ellipse_exactness}
\end{figure}

\subsection{Higher-order convergence with vanishing scales}

For the problem in \cref{ex:autonomous_small_sigma}, we set $u_0=1$ and apply SE2, SE3, CN, RK3, and RK4 on the interval $[0,1]$.
In \cref{fig:inverse_order}, we compare SE2 and the five SE3 choices $\sigma_n=h^p$, $p\in\left\{\frac{1}{2},1,2,3,4\right\}$, with CN, RK3, and RK4 using $17$ approximately logarithmically spaced values of $h$ from $10^{-1}$ to $10^{-3}$.
SE2 exhibits second-order convergence, while these five SE3 choices exhibit observed orders $3$, $4$, $6$, $8$, and $10$, respectively.
CN, RK3, and RK4 exhibit second-, third-, and fourth-order convergence, respectively.

\Cref{thm:G4_vanishing_scale_SE} gives at least third-order convergence for the SE3 scales in this experiment, while \eqref{ineq:autonomous_small_sigma-1} gives the bound $O\bigl(h^{2p + 2}\bigr)$ for $\sigma_n=h^p$ in this example.
Hence, the observed orders $3$, $4$, $6$, $8$, and $10$ follow from the direct estimate for the present equation.
By contrast, in \cref{fig:SE_vs_CN}, the scale supplied by the third-order construction produces third-order convergence, and no higher order is observed.

For each of the SE choices $\sigma_n=h^2$, $\sigma_n=h^3$, and $\sigma_n=h^4$, and for RK4, the error eventually reaches an accuracy floor.
For the SE3 curves, this behavior is consistent with double-precision roundoff and nonlinear-solver tolerances, while the RK4 error stops decreasing at a level consistent with double-precision roundoff.
A precise analysis of this behavior is beyond the scope of the present work.

\begin{figure}[htbp]
    \centering
    \includegraphics[width=1\textwidth]{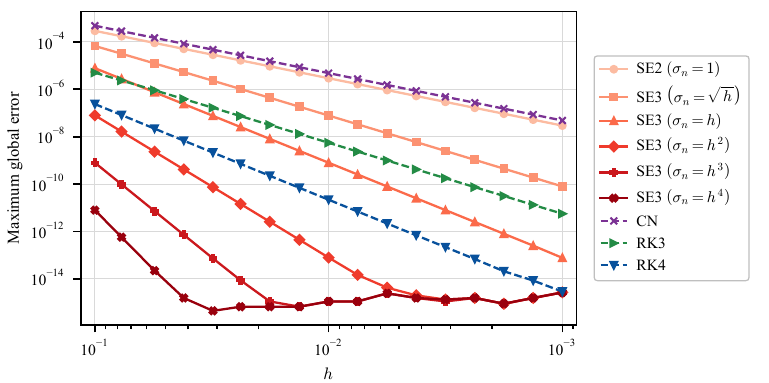}
    \par\vspace{-10pt}
    \caption{Maximum global errors for the problem in \cref{ex:autonomous_small_sigma}.}
    \label{fig:inverse_order}
\end{figure}

\subsection{Fourth-order convergence with optimal scales}

We compare SE2, SE3, SE4, and RK4 for the normalized pendulum family \eqref{ODE:normalized_pendulum_phase} considered in \cref{ex:pendulum_defect_balance_scale} on $\xi\in[0,0.8]$, with the corresponding initial value $u(0)$ for each $\theta=1,0.25,0.1,0.01$.
The computation concerns the normalized positive-velocity phase equation, not the original pendulum equation in time.
The turning points $\xi=\pm1$, where $u=0$ and the phase equation is singular, are excluded.
For each value of $\theta$, all four methods use the same $13$ step sizes, ranging from $10^{-1}$ to $10^{-4}$.

For SE3, case~\ref{G1} applies at $\xi_0=0$, where every positive scale is optimal; we therefore set $\sigma_0=1$ and use the scale in \eqref{eq:SE3} for $n=1,2,\ldots,N_h-1$.
For SE4, the numerical value and scale are determined together as in \cref{thm:SE4}.
For every trial value $v$ in the SE4 implicit solve, the coefficients $A$, $B$, $C$, and $D$ are evaluated at $(\xi_n,u_n)$ and
$(\xi_{n+1},v)$, and the conditions in cases~\ref{E5a} and~\ref{E5b} are checked.
Whenever either case applies, the scale in \eqref{eq:defect_balance_unique_cancelling_scale} is recomputed as $v$ changes.
The scale equation is enforced through this formula, while the state equation is solved numerically; the accepted pair satisfies both equations in \eqref{ODS:SE_4th_order} to the prescribed solver tolerances.

For $h=10^{-2}$, the selected scales and cancellation residuals computed from the accepted numerical pairs are summarized in \cref{tbl:pendulum_scale}.
For every tested value of $\theta$, the pair corresponding to $n=0$ belongs to case~\ref{E5a}, while those corresponding to $n=1,2,\ldots,79$ belong to case~\ref{E5b}.

\begin{table}[htbp]
    \centering
    \caption{
        Ranges of the selected scales and maximum cancellation residuals for the normalized pendulum problem in \cref{ex:pendulum_defect_balance_scale} with $h=10^{-2}$.
    }
    \label{tbl:pendulum_scale}
    \setlength{\tabcolsep}{10pt}
    \begin{tabular}{cccc}
        \toprule
        $\theta$
        & $\min_n \sigma_n$
        & $\max_n \sigma_n$
        & $\max_n \left| \mathcal{D}_{\sigma_n}(\xi_n,u_n) + \mathcal{D}_{\sigma_n}(\xi_{n+1},u_{n+1}) \right|$
        \\
        \midrule
        $1$
        & $0.90105$
        & $1.20073$
        & $7.105\times10^{-15}$
        \\
        $0.25$
        & $0.99394$
        & $1.01052$
        & $5.329\times10^{-15}$
        \\
        $0.1$
        & $0.99903$
        & $1.00167$
        & $3.553\times10^{-15}$
        \\
        $0.01$
        & $0.99999$
        & $1.00002$
        & $4.441\times10^{-15}$
        \\
        \bottomrule
    \end{tabular}
\end{table}

As shown in \cref{fig:pendulum_order_comparison}, the SE4 errors decay with observed order approximately four for every tested turning angle before reaching the accuracy floor.
At $h=10^{-2}$, RK4 is about $2.46$ times more accurate at $\theta=1$, whereas SE4 is about $6.37$, $39.8$, and $3.93\times10^3$ times more accurate at $\theta=0.25$, $0.1$, and $0.01$, respectively.
These comparisons concern accuracy at the same step size rather than at the same computational cost.
The preceding pendulum quantities concern the accepted numerical pairs and do not verify the uniform neighborhood assumption in \cref{thm:SE4}.

\begin{figure}[htbp]
    \centering
    \includegraphics[width=1\textwidth]{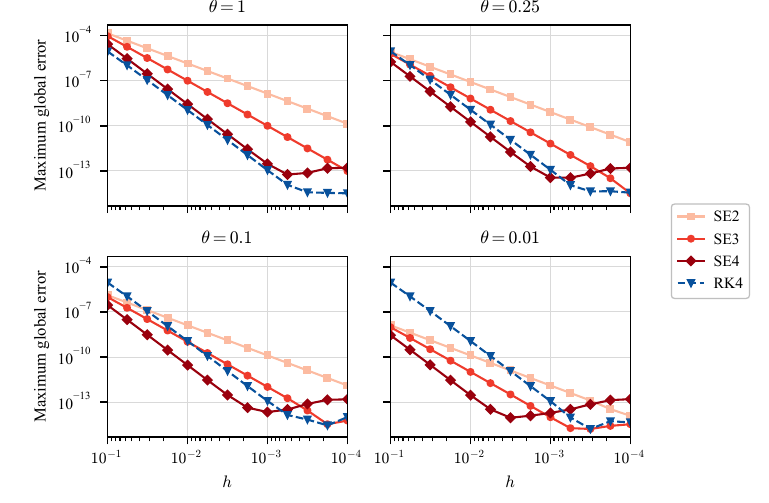}
    \par\vspace{-10pt}
    \caption{Maximum global errors for the normalized pendulum problem in \cref{ex:pendulum_defect_balance_scale}.}
    \label{fig:pendulum_order_comparison}
\end{figure}

\subsection{Defect cancellation and convergence with optimal scales}

We numerically solve the problem in \cref{ex:SE_vs_CN} on the interval $[0, 1]$.
The defect formula \eqref{eq:quadratic_decay_defect} and the corresponding unique optimal minimizer are derived in \cref{ex:SE_vs_CN}.
For two positive values $x$ and $y$, a direct calculation gives $A=-6(x^4+y^4)$, $B=6(x^4-y^4)^2$, and $C=6x^4y^4(x^4+y^4)$.
Hence, $D>0$ and $AC<0$, and case~\ref{E5b} applies.
Take $h=0.3$ and $t_n=nh$ for $n=0,1,2$.
For SE3 and SE4, respectively, define
\begin{displaymath}
    \sigma_n^{(3)} := u(t_n)^2 
    \qquad\text{and}\qquad
    \sigma_n^{(4)} := \Sigma\bigl(t_n, u(t_n); t_{n+1}, u(t_{n+1})\bigr).
\end{displaymath}
The selected scales satisfy
\begin{align*}
    \sigma_0^{(3)}
    &=1,
    &
    \sigma_1^{(3)}
    &=
    \frac{100}{169},
    &
    \sigma_2^{(3)}
    &=
    \frac{25}{64},
    \\
    \sigma_0^{(4)}
    &\approx
    0.877,
    &
    \sigma_1^{(4)}
    &\approx
    0.523,
    &
    \sigma_2^{(4)}
    &\approx
    0.349.
\end{align*}
For $n = 0, 1, 2$, define 
\begin{align*}
    \mathsf{D}^{(3)}_n(\sigma)
    &:= |\mathcal{D}_\sigma(t_n,u(t_n))|,    \\
    \mathsf{D}^{(4)}_n(\sigma)
    &:= \left| \mathcal{D}_\sigma(t_n,u(t_n)) + \mathcal{D}_\sigma(t_{n+1},u(t_{n+1})) \right|.
\end{align*}
Then, for all $n = 0, 1, 2$,
\begin{displaymath}
    \mathsf{D}^{(3)}_n \left( \sigma_n^{(3)} \right) = 0 
    \qquad\text{and}\qquad
    \mathsf{D}^{(4)}_n \left( \sigma_n^{(4)} \right) = 0.
\end{displaymath}
The left panel of \cref{fig:quadratic_decay_defect_cancellation} shows $\mathsf{D}^{(3)}_n(\sigma)$, while the right panel shows $\mathsf{D}^{(4)}_n(\sigma)$ for $n=0,1,2$.
The scale variable ranges over $10^{-2} \leq \sigma \leq 10^2$.
The blue dashed lines in the left and right panels mark $\sigma_n^{(3)}$ and $\sigma_n^{(4)}$, respectively, at which $\mathsf{D}_n^{(3)}$ and $\mathsf{D}_n^{(4)}$ vanish.
Both quantities are evaluated at exact solution values rather than at the numerical iterates.
Therefore, the variation of the cancelling scales with $n$ is not caused by accumulated numerical error.
Convergence is examined separately in \cref{fig:SE_vs_CN}.

\begin{figure}[htbp]
    \centering
    \includegraphics[width=1\textwidth]{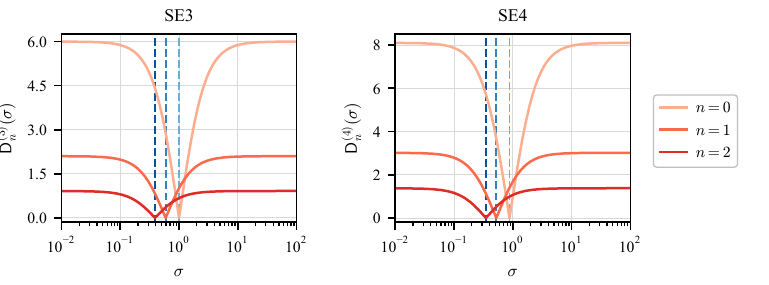}
    \par\vspace{-10pt}
    \caption{Defect cancellation by SE3 (left) and SE4 (right) for the problem in \cref{ex:SE_vs_CN} with $h=0.3$.}
    \label{fig:quadratic_decay_defect_cancellation}
\end{figure}

For the convergence experiment, SE3 sets $\sigma_n = u_n^2$ and keeps this scale fixed during the implicit solve.
At each trial value in the implicit solve, SE4 selects the unique optimal scale in case~\ref{E5b}.
Since $u(t)\geq\frac{1}{2}$ on $[0,1]$, the assumptions of \cref{thm:SE4} hold on $U_\rho(u)$ for every $\rho\in(0,\frac{1}{2})$.
We compare CN, SE2, SE3, SE4, RK3, and RK4 using the step sizes $h=10^{-1}2^{-k}$ for $k=0,1,\ldots,5$.
\Cref{fig:SE_vs_CN} shows second-order convergence for CN and SE2, third-order convergence for SE3 and RK3, and fourth-order convergence for SE4 and RK4. The observed orders of SE2, SE3, and SE4 agree with \cref{cor:convergence_orders_SE,thm:third_order_SE,thm:SE4}, respectively.
RK3 and RK4 are included as third- and fourth-order references, respectively.

\begin{figure}[htbp]
    \centering
    \includegraphics[width=1\textwidth]{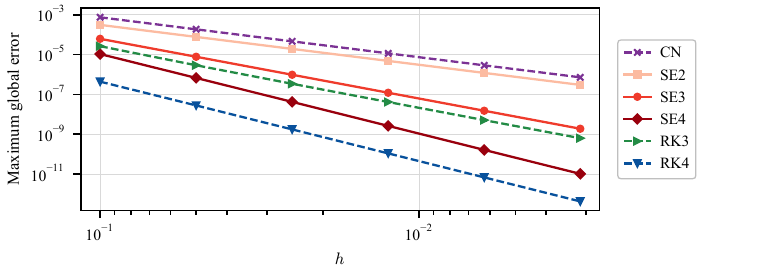}
    \par\vspace{-10pt}
    \caption{Maximum global errors for the problem in \cref{ex:SE_vs_CN}.}
    \label{fig:SE_vs_CN}
\end{figure}

\subsection{One-step behavior for diverging scales}

By \eqref{eq:C_infty}, the SE relation approaches the CN relation as $\sigma\to\infty$.
The following experiment compares the corresponding one-step errors.

We revisit the equation in \cref{ex:autonomous_most_cases}.
For each $h$, we choose consecutive exact points $(t_n,u(t_n))$ and $(t_{n+1},u(t_{n+1}))$, with $t_{n+1}-t_n=h$, such that
\begin{displaymath}
    \mathcal{L}_F^2F(t_n,u(t_n))
    +
    \mathcal{L}_F^2F(t_{n+1},u(t_{n+1}))
    =
    0.
\end{displaymath}
For these pairs, $A=0$, $B<0$, $C<0$, and $cf>0$, and hence case~\ref{E3b1} applies.

\Cref{fig:autonomous_large_scale} compares the one-step errors of SE2, SE with the fixed scales $\sigma=3$ and $10$, SE with $\sigma=h^{-1}$, and CN.
We use the step sizes $h=2^{-k}$ for $k=1, 2, \ldots, 7$.
The exact pair is selected separately for each $h$; thus, this is a one-step experiment rather than a global convergence experiment.
The one-step errors of the three fixed-scale choices decay with observed order three, whereas those of SE with $\sigma=h^{-1}$ and CN decay with observed order five.
In this experiment, SE with $\sigma=h^{-1}$ has a slightly smaller error than CN.

\begin{figure}[htbp]
    \centering
    \includegraphics[width=1\textwidth]{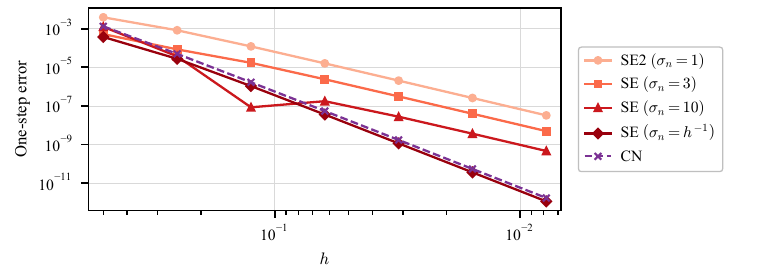}
    \par\vspace{-10pt}
    \caption{One-step errors in case~\ref{E3b1} for the problem in \cref{ex:autonomous_most_cases}.}
    \label{fig:autonomous_large_scale}
\end{figure}

\subsection{Numerical considerations}

In the present implementation \cite{2026s_Jung_SIAM}, the classical Runge--Kutta methods were generally faster in the tested examples.
A systematic comparison of accuracy and computational cost, accounting for derivative evaluations and parallel implementation, is left for future work.

Although $\mathcal{B}=\mathcal{C}$ and the representation \eqref{eq:repr_B} is exact, direct floating-point evaluation of the trigonometric representation $\mathcal{B}$ can be unstable when the averaged angle is close to $\pm\frac{\pi}{2}$, since the tangent amplifies errors in the angle.
For this reason, the experiments use the algebraic representation $\mathcal{C}$ and its scaled form $\mathcal{C}_\sigma$.

\section{Conclusion}

For a given problem \eqref{ODE:IVP} with a sufficiently smooth right-hand side $F$, the values of $F$ and its derivatives determine the applicable case and, under the corresponding regularity and uniformity assumptions, whether the specular ellipse method is exact, converges with order two, converges with order three in case~\ref{G3} or the equality case of~\ref{G4}, or converges with order four in cases~\ref{E5a} and~\ref{E5b}.

\appendix 
\crefalias{section}{appendix}

\section{An auxiliary Taylor estimate}
\label{apx:C_sigma}

We collect properties of the function $\mathcal{C}_{\sigma}$; see \cite[App.~A]{2026a_Jung} for properties of the function $\mathcal{C}$.
First, observe that 
\begin{equation}    \label{eq:algebraic_C_sigma}
    \mathcal{C}_{\sigma}(\alpha, \beta) = \frac{\beta \sqrt{\sigma^2 + \alpha^2} + \alpha \sqrt{\sigma^2 + \beta^2}}{\sqrt{\sigma^2 + \alpha^2} + \sqrt{\sigma^2 + \beta^2}}
\end{equation}
for every $\sigma>0$ and $(\alpha,\beta)\in\mathbb{R}^2$.

\begin{lemma}
    Let $(\alpha, \beta) \in \mathbb{R}^2$ be fixed. 
    Then
    \begin{equation}    \label{eq:C_0}
        \lim_{\sigma \searrow 0}
        \mathcal{C}_{\sigma}(\alpha,\beta)
        =
        \begin{cases}
            \displaystyle
            \frac{2\alpha\beta}{\alpha+\beta},
            & \text{if $\alpha\beta>0$,}\\[2mm]
            0,
            & \text{if $\alpha\beta\leq0$,}
        \end{cases}
    \end{equation}
    and 
    \begin{equation}    \label{eq:C_infty}
        \lim_{\sigma \to \infty} \mathcal{C}_{\sigma} (\alpha,\beta) = \frac{\alpha + \beta}{2}.
    \end{equation}
\end{lemma}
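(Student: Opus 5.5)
The plan is to work directly from the algebraic representation \eqref{eq:algebraic_C_sigma}. Dividing the numerator and denominator by $\sigma$ turns it into a form where each limit can be read off. Throughout, write $S_\alpha := \sqrt{\sigma^2+\alpha^2}$ and $S_\beta := \sqrt{\sigma^2+\beta^2}$, so that
\begin{displaymath}
    \mathcal{C}_\sigma(\alpha,\beta) = \frac{\beta S_\alpha + \alpha S_\beta}{S_\alpha + S_\beta}.
\end{displaymath}

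For \eqref{eq:C_infty}, divide the numerator and denominator by $\sigma$. Then $S_\alpha/\sigma \to 1$ and $S_\beta/\sigma \to 1$ as $\sigma\to\infty$, so the numerator tends to $\alpha+\beta$ and the denominator tends to $2$. This gives $(\alpha+\beta)/2$ immediately.

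For \eqref{eq:C_0}, I would let $\sigma\searrow0$ in $S_\alpha \to |\alpha|$ and $S_\beta\to|\beta|$ and then split into cases.
\begin{enumerate}
    \item If $(\alpha,\beta)=(0,0)$, then $\mathcal{C}_\sigma(0,0)=0$ for every $\sigma$, since $\mathcal{C}(0,0)=0$. The limit is therefore $0$.
    \item If $|\alpha|+|\beta|>0$, the denominator tends to $|\alpha|+|\beta|>0$, so the limit is $(\beta|\alpha|+\alpha|\beta|)/(|\alpha|+|\beta|)$ by continuity.
    \begin{enumerate}
        \item If $\alpha\beta>0$, with common sign $s$, then $\beta|\alpha|+\alpha|\beta| = 2s|\alpha||\beta| = 2\alpha\beta/s$ and $|\alpha|+|\beta| = s(\alpha+\beta)$. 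Since $s^2=1$, the ratio equals $2\alpha\beta/(\alpha+\beta)$.
        \item If $\alpha\beta<0$, then $\beta|\alpha| = -\alpha|\beta|$ (check by signs), so the numerator is $0$.
        \item If exactly one of $\alpha,\beta$ is zero, the numerator is again $0$.
    \end{enumerate}
\end{enumerate}

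There is no real obstacle here. The only care needed is the degenerate case $\alpha=\beta=0$, where the denominator limit vanishes, and that case is handled by the identity $\mathcal{C}_\sigma(0,0)=0$ for all $\sigma>0$.
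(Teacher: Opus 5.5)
Your proposal is correct and is the direct calculation the paper has in mind; the paper's own proof says only that the results follow from direct calculations. Working from the algebraic form \eqref{eq:algebraic_C_sigma}, your $\sigma\to\infty$ argument and your case split for $\sigma\searrow0$ are complete. The split covers $\alpha\beta>0$, $\alpha\beta<0$, exactly one zero, and $\alpha=\beta=0$, where $\mathcal{C}_\sigma(0,0)=0/(2\sigma)=0$.
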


\begin{proof}
    The results follow from direct calculations.
\end{proof}

\begin{lemma} \label{lem:Taylor_C_sigma}
    Let $\sigma > 0$.
    The function $\mathcal{C}_{\sigma}$ is smooth in $\mathbb{R}^2$.
    The second-order Taylor expansion of $\mathcal{C}_{\sigma}$ about $(\alpha,\beta)\in\mathbb{R}^2$ is given by
    \begin{align*}
        \mathcal{C}_{\sigma} (\alpha + h_1, \beta+h_2)
        =&
        \, \mathcal{C}_{\sigma}(\alpha,\beta)
        + \left[ \sigma^2+ \mathcal{C}_{\sigma}(\alpha,\beta)^2 \right] \left[ \frac{h_1}{2(\sigma^2+\alpha^2)} + \frac{h_2}{2(\sigma^2+\beta^2)} \right.
        \\
        &\, + \frac{\mathcal{C}_{\sigma}(\alpha,\beta)-2\alpha}{4(\sigma^2+\alpha^2)^2} h_1^2
            + \frac{\mathcal{C}_{\sigma}(\alpha,\beta)}{2(\sigma^2+\alpha^2)(\sigma^2+\beta^2)} h_1h_2
        \\
        &\, +\left. \frac{\mathcal{C}_{\sigma}(\alpha,\beta)-2\beta}{4(\sigma^2+\beta^2)^2} h_2^2 \right]
        + R_{\sigma}(h_1,h_2),
    \end{align*}
    where
    \begin{displaymath}
        \left|R_{\sigma}(h_1,h_2)\right|
        \leq
        \lambda \left(|h_1|^3+|h_2|^3\right)
    \end{displaymath}
    for all sufficiently small $h_1,h_2$, where $\lambda\equiv\lambda(\alpha,\beta,\sigma)>0$ is a constant.
\end{lemma}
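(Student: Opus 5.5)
We plan to prove \cref{lem:Taylor_C_sigma} by first reducing to $\sigma=1$ through homogeneity, then differentiating a convenient implicit representation of $\mathcal{C}_\sigma$, and finally obtaining the remainder from Taylor's theorem.

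\emph{Smoothness.} We work with the algebraic form \eqref{eq:algebraic_C_sigma}. The maps $\alpha\mapsto\sqrt{\sigma^2+\alpha^2}$ and $\beta\mapsto\sqrt{\sigma^2+\beta^2}$ are smooth and bounded below by $\sigma>0$, so the denominator never vanishes. Hence $\mathcal{C}_\sigma\in C^\infty(\mathbb{R}^2)$.

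\emph{Derivatives via an implicit identity.} Rather than differentiating the quotient directly, we use the angular representation. Write $\mathcal{C}_\sigma(\alpha,\beta)=\sigma\tan\theta$, where
\begin{displaymath}
    \theta=\tfrac12\bigl(\arctan(\alpha/\sigma)+\arctan(\beta/\sigma)\bigr),
\end{displaymath}
which follows from \eqref{def:C_sigma} together with $\mathcal{B}=\mathcal{C}$. Set $g(\theta):=\sigma\tan\theta$. Then $g'(\theta)=(\sigma^2+g^2)/\sigma$ and $g''(\theta)=2g(\sigma^2+g^2)/\sigma^2$. We also have
\begin{displaymath}
    \partial_\alpha\theta=\frac{\sigma}{2(\sigma^2+\alpha^2)},
    \qquad
    \partial_\alpha^2\theta=-\frac{\sigma\alpha}{(\sigma^2+\alpha^2)^2},
    \qquad
    \partial_\alpha\partial_\beta\theta=0,
\end{displaymath}
and symmetrically in $\beta$. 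By the chain rule, $\partial_\alpha\mathcal{C}_\sigma=g'\,\partial_\alpha\theta=(\sigma^2+\mathcal{C}_\sigma^2)/\bigl(2(\sigma^2+\alpha^2)\bigr)$, which gives the linear terms.

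For the second derivatives, $\partial_\alpha^2\mathcal{C}_\sigma=g''(\partial_\alpha\theta)^2+g'\,\partial_\alpha^2\theta$. Substituting the formulas above gives $(\sigma^2+\mathcal{C}_\sigma^2)(\mathcal{C}_\sigma-2\alpha)/\bigl(2(\sigma^2+\alpha^2)^2\bigr)$; halving it yields the stated $h_1^2$ coefficient. For the mixed derivative, $\partial_\alpha\partial_\beta\mathcal{C}_\sigma=g''\,\partial_\alpha\theta\,\partial_\beta\theta$. This equals $(\sigma^2+\mathcal{C}_\sigma^2)\,\mathcal{C}_\sigma/\bigl(2(\sigma^2+\alpha^2)(\sigma^2+\beta^2)\bigr)$, which is exactly the $h_1h_2$ coefficient because the Taylor formula carries the factor $2\cdot\tfrac12=1$ for the mixed term. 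The $h_2^2$ coefficient follows by symmetry.

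\emph{Remainder.} Taylor's theorem with the Lagrange form of the remainder applies to the smooth function $\mathcal{C}_\sigma$. The remainder is bounded by the maximum of the third partial derivatives on the closed unit ball around $(\alpha,\beta)$, multiplied by $(|h_1|+|h_2|)^3\le4(|h_1|^3+|h_2|^3)$. This gives the constant $\lambda(\alpha,\beta,\sigma)$ for $|h_1|,|h_2|\le1/2$, which covers all sufficiently small $h_1,h_2$.

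\emph{Main difficulty.} The delicate step is the bookkeeping in the second derivatives: the factor $1/2$ in Taylor's formula, the sign of $\partial^2\theta$, and the fact that the mixed derivative of $\theta$ vanishes. Tracking these carefully is what produces the clean common factor $\sigma^2+\mathcal{C}_\sigma^2$.
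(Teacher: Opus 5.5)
Your proposal is correct and follows the paper's proof essentially verbatim. The paper also writes $\mathcal{C}_\sigma=\sigma\tan\theta_\sigma$, uses $\sigma^2\sec^2\theta_\sigma=\sigma^2+\mathcal{C}_\sigma^2$ to obtain the first and second partial derivatives, and concludes with Taylor's theorem and a bound on the third partials over a compact neighborhood; your chain-rule bookkeeping via $g(\theta)=\sigma\tan\theta$ and the explicit constant from $(|h_1|+|h_2|)^3\le4(|h_1|^3+|h_2|^3)$ just fill in what the paper calls ``direct computations'' (the announced reduction to $\sigma=1$ by homogeneity is never used and can be dropped).
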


\begin{proof}
    For each fixed $\sigma>0$, the smoothness of $\mathcal{C}_\sigma$ follows directly from that of $\mathcal{C}$.
    Set 
    \begin{displaymath}
        \theta_{\sigma}(\alpha,\beta) := \frac{1}{2}\arctan\frac{\alpha}{\sigma} + \frac{1}{2}\arctan\frac{\beta}{\sigma}.
    \end{displaymath}
    Then $\mathcal{C}_{\sigma}(\alpha,\beta) = \sigma\tan\theta_{\sigma}(\alpha,\beta)$, and hence $\sigma^2 \sec^2\theta_{\sigma}(\alpha,\beta) = \sigma^2 + \mathcal{C}_{\sigma}(\alpha,\beta)^2$.
    Direct computations give
    \begin{align}
        \partial_{\alpha} \mathcal{C}_{\sigma}(\alpha,\beta)
        &=
        \sec^2\theta_{\sigma}(\alpha,\beta) \frac{\sigma^2}{2(\sigma^2+\alpha^2)}
        =
        \frac{\sigma^2 + \mathcal{C}_{\sigma}(\alpha, \beta)^2}{2(\sigma^2 + \alpha^2)},  \label{lem:Taylor_C_sigma-a}
        \\
        \partial_{\beta} \mathcal{C}_{\sigma}(\alpha,\beta)
        &=
        \sec^2\theta_{\sigma}(\alpha,\beta) \frac{\sigma^2}{2(\sigma^2+\beta^2)}
        =
        \frac{\sigma^2 + \mathcal{C}_{\sigma}(\alpha, \beta)^2}{2(\sigma^2 + \beta^2)},   \label{lem:Taylor_C_sigma-b}
        \\
        \partial_{\alpha\alpha} \mathcal{C}_{\sigma}(\alpha,\beta)
        &=
        \sec^2\theta_{\sigma}(\alpha,\beta) \frac{\sigma^2 \bigl( \mathcal{C}_{\sigma}(\alpha,\beta)-2\alpha \bigr)}{2(\sigma^2+\alpha^2)^2},   \nonumber
        \\
        \partial_{\beta\beta} \mathcal{C}_{\sigma}(\alpha,\beta)
        &=
        \sec^2\theta_{\sigma}(\alpha,\beta) \frac{ \sigma^2 \bigl( \mathcal{C}_{\sigma}(\alpha,\beta)-2\beta \bigr)}{2(\sigma^2+\beta^2)^2
        },  \nonumber
        \\
        \partial_{\alpha\beta} \mathcal{C}_{\sigma}(\alpha,\beta)
        &=
        \sec^2\theta_{\sigma}(\alpha,\beta) \frac{ \sigma^2\mathcal{C}_{\sigma}(\alpha,\beta)}{2(\sigma^2+\alpha^2)(\sigma^2+\beta^2)} =
        \partial_{\beta\alpha} \mathcal{C}_{\sigma}(\alpha,\beta).  \nonumber
    \end{align}
    The stated expansion now follows from Taylor's theorem.
    Since $\mathcal{C}_\sigma$ is smooth, its third-order partial derivatives are bounded on a sufficiently small compact neighborhood of $(\alpha,\beta)$, which gives the desired remainder estimate.
\end{proof}

\begin{lemma}
    \label{lem:uniform_diagonal_C_sigma}
    Let $K\subset\mathbb{R}$ be nonempty and compact, and let $\eta > 0$.
    Then there exist constants $r \equiv r(K,\eta)>0$ and $\lambda\equiv\lambda(K, \eta)>0$ such that, for every $\sigma > 0$, every $\alpha\in K$ satisfying $\sigma^2 + \alpha^2 \geq \eta$ and every $\delta \in \mathbb{R}$ with $|\delta| \leq r$,
    \begin{align*}
        \mathcal{C}_\sigma(\alpha,\alpha+\delta)
        &=
        \alpha + \frac{1}{2}\delta - \frac{\alpha}{4(\sigma^2+\alpha^2)}\delta^2 + \frac{\alpha^2-\sigma^2}{8(\sigma^2+\alpha^2)^2}\delta^3 + R_\sigma(\alpha,\delta),
    \end{align*}
    where the estimate
    \begin{displaymath}
        |R_\sigma(\alpha, \delta)|
        \leq
        \lambda|\delta|^4
    \end{displaymath}
    holds uniformly for all such choices of $\alpha$, $\sigma$, and $\delta$.
\end{lemma}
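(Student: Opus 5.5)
We reduce the claim to a one-variable Taylor expansion with uniformly bounded coefficients. Write $g(\delta):=\mathcal{C}_\sigma(\alpha,\alpha+\delta)$. By \cref{lem:Taylor_C_sigma}, $\mathcal{C}_\sigma$ is smooth, so $g$ is smooth. Taylor's theorem with Lagrange remainder then gives
\[
g(\delta)=\sum_{k=0}^{3}\frac{g^{(k)}(0)}{k!}\delta^k+\frac{g^{(4)}(\xi)}{24}\delta^4
\]
for some $\xi$ between $0$ and $\delta$. Two things must be checked:
\begin{enumerate}[label=(\roman*)]
\item the coefficients $g^{(k)}(0)/k!$ for $k\le 3$ agree with the stated ones;
\item $\sup|g^{(4)}|$ is bounded by a constant depending only on $K$ and $\eta$, uniformly over $\sigma>0$ and over $|\xi|\le r$.
\end{enumerate}

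For the coefficients, use the angular representation $g(\delta)=\sigma\tan\bigl(\tfrac12\arctan(\alpha/\sigma)+\tfrac12\arctan((\alpha+\delta)/\sigma)\bigr)$. Then $g(0)=\alpha$ because $\mathcal{C}_\sigma(\alpha,\alpha)=\alpha$. By \eqref{lem:Taylor_C_sigma-b}, $g'(\delta)=(\sigma^2+g^2)/\bigl(2(\sigma^2+(\alpha+\delta)^2)\bigr)$, which gives $g'(0)=\tfrac12$. Differentiating this ODE-type identity twice more, with $g(0)=\alpha$ and $g'(0)=\tfrac12$, should produce $g''(0)=-\alpha/\bigl(2(\sigma^2+\alpha^2)\bigr)$ and $g'''(0)=3(\alpha^2-\sigma^2)/\bigl(4(\sigma^2+\alpha^2)^2\bigr)$. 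These match $-\tfrac{\alpha}{4(\cdot)}$ and $\tfrac{\alpha^2-\sigma^2}{8(\cdot)^2}$. The $\delta^2$ coefficient can be cross-checked against the $h_2^2$ term of \cref{lem:Taylor_C_sigma}.

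The main obstacle is the uniform bound on $g^{(4)}$, because $\sigma$ ranges over all of $(0,\infty)$ and the estimate must hold even as $\sigma\searrow0$. I would write every derivative of $g$ as a polynomial in $g$, $\alpha+\delta$, and $\sigma^2$, divided by powers of $\sigma^2+(\alpha+\delta)^2$. Differentiating the relation $2(\sigma^2+\beta^2)g'=\sigma^2+g^2$ (where $\beta=\alpha+\delta$) repeatedly gives a recursion of exactly this form.

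The bounds on the pieces then come as follows.
\begin{enumerate}[label=(\alph*)]
\item Choose $r$ by the same formula as \eqref{def:uniform_diagonal_radius}, namely $r=(M^2+\eta/2)^{1/2}-M$ with $M=\max_K|\alpha|$. Then $\sigma^2+(\alpha+\delta)^2\ge \sigma^2+\alpha^2-2M|\delta|-\delta^2\ge\eta/2$ for $|\delta|\le r$, so the denominators stay bounded below.
\item Since $\mathcal{C}$ is an angular mean, $g$ lies between $\alpha$ and $\alpha+\delta$, so $|g|\le M+r$.
\item The factor $\sigma^2$ appearing in the numerators is always paired so that $\sigma^2/(\sigma^2+\beta^2)\le1$.
\end{enumerate}

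The concern is whether a numerator contains a high power of $\sigma^2$ that is not compensated by the denominators, which would allow blow-up as $\sigma\to\infty$. I would verify homogeneity: $g^{(k)}$ scales like $\sigma^{1-k}$ in the variables $(\alpha,\delta,\sigma)$. So each term of $g^{(k)}$ has total degree $1-k$ and is dominated by $(\sigma^2+\beta^2)^{-(k-1)/2}$ times bounded ratios. This gives a bound of order $\eta^{-3/2}$ for $g^{(4)}$, uniform in $\sigma$. Taking $\lambda=\sup|g^{(4)}|/24$ then completes the argument.
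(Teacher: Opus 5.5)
Your proof is correct, but it takes a different route from the paper's.

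\textbf{The paper's route.} The paper never differentiates four times. From \eqref{eq:algebraic_C_sigma} it derives the exact diagonal identity $\mathcal{C}_\sigma(\alpha,\alpha+\delta)=\alpha+\delta/(1+\sqrt{1+w})$, where $w=(2\alpha\delta+\delta^2)/(\sigma^2+\alpha^2)$. The same choice of $r$ that you make forces $|w|\le\frac12$. The paper then Taylor-expands the single fixed function $x\mapsto(1+\sqrt{1+x})^{-1}$ on $[-\frac12,\frac12]$. All $\sigma$-dependence is absorbed into $w$, so uniformity in $\sigma$ comes for free. The stated coefficients come from collecting powers of $\delta$ in $-\delta w/8+\delta w^2/16$.

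\textbf{Your route.} You apply the Lagrange remainder to $g(\delta)=\mathcal{C}_\sigma(\alpha,\alpha+\delta)$. You get the coefficients by differentiating $2(\sigma^2+\beta^2)g'=\sigma^2+g^2$, and your values $g''(0)=-\alpha/(2(\sigma^2+\alpha^2))$ and $g'''(0)=3(\alpha^2-\sigma^2)/(4(\sigma^2+\alpha^2)^2)$ are correct. You then bound $g^{(4)}$ structurally, and that step goes through.

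Inductively, each $g^{(k)}$ is a sum of terms $g^a\beta^b\sigma^c/(\sigma^2+\beta^2)^m$ with $a+b+c-2m=1-k$. This holds term by term because the recursion preserves it. Joint homogeneity of the function alone would not guarantee it, so that is what you should cite.

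With $\rho=\sqrt{\sigma^2+\beta^2}\ge\sqrt{\eta/2}$ you have $|\beta|/\rho\le1$, $\sigma/\rho\le1$ and $|g|/\rho\le(M+r)\sqrt{2/\eta}$. Hence each term is bounded by a constant depending on $K$ and $\eta$ times $\rho^{-3}$. So the final constant depends on $M$ as well, not just on $\eta^{-3/2}$, but it still depends only on $K$ and $\eta$, as required.

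\textbf{Comparison.} The paper's argument is shorter and makes uniformity in $\sigma$ transparent without any bookkeeping of higher derivatives. Yours is more mechanical and would extend directly to expansions of any order. The cost is that you must track the algebraic structure of $g^{(4)}$.
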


\begin{proof}
    Define 
    \begin{equation}    \label{def:uniform_diagonal_radius}
        M :=\max_{\alpha\in K}|\alpha|
        \qquad\text{and}\qquad        
        r := \left( M^2+\frac{\eta}{2} \right)^{\frac{1}{2}} - M. 
    \end{equation}
    Then $r>0$ and $4Mr + 2r^2=\eta$.
    Fix $\alpha\in K$ satisfying $\sigma^2+\alpha^2\geq\eta$, and let $\delta\in\mathbb{R}$ satisfy $|\delta|\leq r$.
    Let $w := (2\alpha\delta+\delta^2)(\sigma^2+\alpha^2)^{-1}$.
    Then $|w| \leq \frac{1}{2}$.
    Also, \eqref{eq:algebraic_C_sigma} yields
    \begin{equation}    \label{eq:exact_diagonal_C_sigma}
        \mathcal{C}_{\sigma}(\alpha,\alpha + \delta)
        = 
        \alpha + \frac{\delta}{1+\sqrt{1 + w}} .
    \end{equation}
    Taylor's theorem at $x=0$ applied to the mapping $x \mapsto \left( 1+\sqrt{1+x} \right)^{-1}$ gives
    \begin{displaymath}
        \frac{1}{1+\sqrt{1+w}}
        =
        \frac{1}{2} - \frac{w}{8} + \frac{w^2}{16} + O(|w|^3)
    \end{displaymath}
    uniformly for $|w|\leq\frac{1}{2}$ (the uniformity follows from the boundedness of the third derivative on $[-\frac{1}{2},\frac{1}{2}]$).
    Substituting this expansion into \eqref{eq:exact_diagonal_C_sigma}, we obtain
    \begin{align*}
        \mathcal{C}_\sigma(\alpha,\alpha+\delta)
        &=
        \alpha + \frac{\delta}{2} - \frac{\delta}{8} \cdot \frac{2\alpha\delta+\delta^2}{\sigma^2+\alpha^2} + \frac{\delta}{16}
        \left( \frac{2\alpha\delta+\delta^2}{\sigma^2+\alpha^2} \right)^2 + \delta \, O\left( \left\vert \frac{2\alpha\delta+\delta^2}{\sigma^2+\alpha^2} \right\vert^3  \right)
        \\
        &=
        \alpha + \frac{\delta}{2} - \frac{\alpha}{4(\sigma^2+\alpha^2)} \delta^2 + \frac{\alpha^2-\sigma^2}{8(\sigma^2+\alpha^2)^2} \delta^3 + O(|\delta|^4).
    \end{align*}
    Since $|\alpha|\leq M$, $|\delta|\leq r$, and $\sigma^2+\alpha^2\geq\eta$, all terms absorbed into $O(|\delta|^4)$ are bounded uniformly.
    Since $M$ and $r$ depend only on $K$ and $\eta$, the stated remainder estimate follows by choosing a sufficiently large constant $\lambda>0$ depending only on $K$ and $\eta$.
\end{proof}

The following estimate provides an error-propagation bound that is uniform over all positive scales.

\begin{lemma} \label{lem:Lipschitz_C_sigma}
    For every $\sigma>0$ and every $(\alpha_1,\beta_1),(\alpha_2,\beta_2)\in\mathbb{R}^2$,
    \begin{displaymath}
        \left| \, \mathcal{C}_{\sigma}(\alpha_1,\beta_1) - \mathcal{C}_{\sigma}(\alpha_2,\beta_2) \right|
        \leq
        2 \left(|\alpha_1-\alpha_2| + |\beta_1-\beta_2| \right).
    \end{displaymath}
\end{lemma}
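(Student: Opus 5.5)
The plan is to obtain the Lipschitz bound from the gradient of $\mathcal{C}_\sigma$ and the mean value theorem. By \cref{lem:Taylor_C_sigma}, $\mathcal{C}_\sigma$ is smooth on $\mathbb{R}^2$, and its first partial derivatives are given by \eqref{lem:Taylor_C_sigma-a} and \eqref{lem:Taylor_C_sigma-b}:
\begin{displaymath}
    \partial_\alpha\mathcal{C}_\sigma(\alpha,\beta)
    =
    \frac{\sigma^2+\mathcal{C}_\sigma(\alpha,\beta)^2}{2(\sigma^2+\alpha^2)},
    \qquad
    \partial_\beta\mathcal{C}_\sigma(\alpha,\beta)
    =
    \frac{\sigma^2+\mathcal{C}_\sigma(\alpha,\beta)^2}{2(\sigma^2+\beta^2)}.
\end{displaymath}
It therefore suffices to bound each partial derivative by $2$, uniformly in $\sigma>0$ and $(\alpha,\beta)\in\mathbb{R}^2$. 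The claim then follows from the mean value theorem along the segment joining $(\alpha_2,\beta_2)$ to $(\alpha_1,\beta_1)$, since $|\nabla\mathcal{C}_\sigma\cdot(\Delta\alpha,\Delta\beta)|\le 2(|\Delta\alpha|+|\Delta\beta|)$.

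The key step, and the only real obstacle, is to control $\sigma^2+\mathcal{C}_\sigma^2$ by the smaller of $\sigma^2+\alpha^2$ and $\sigma^2+\beta^2$ up to a factor $4$. I will use the angular representation $\mathcal{C}_\sigma=\sigma\tan\theta_\sigma$, where $\theta_\sigma=\frac12(\phi+\psi)$ with $\phi=\arctan(\alpha/\sigma)$ and $\psi=\arctan(\beta/\sigma)$, both in $(-\frac{\pi}{2},\frac{\pi}{2})$. This gives $\sigma^2+\mathcal{C}_\sigma^2=\sigma^2\sec^2\theta_\sigma$ and $\sigma^2+\alpha^2=\sigma^2\sec^2\phi$, so
\begin{displaymath}
    \partial_\alpha\mathcal{C}_\sigma
    =
    \frac{\cos^2\phi}{2\cos^2\bigl(\tfrac{\phi+\psi}{2}\bigr)}.
\end{displaymath}
This ratio is unbounded in general. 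For example, with $\phi=0$ and $\psi\to\frac{\pi}{2}$, the bound is only $\frac{1}{2\cos^2(\pi/4)}=1$; with $\phi,\psi$ both near $\frac{\pi}{2}$, the numerator also vanishes.

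To handle this, I would first try the half-angle bound. Since $|\phi-\psi|<\pi$, we can write $\cos\phi=\cos\bigl(\tfrac{\phi+\psi}{2}+\tfrac{\phi-\psi}{2}\bigr)$ and compare it with $\cos\frac{\phi+\psi}{2}$. Writing $m=\frac{\phi+\psi}{2}$ and $d=\frac{\phi-\psi}{2}$, we have $|d|<\frac{\pi}{2}$ and $|m|+|d|<\frac{\pi}{2}$, because both $\phi=m+d$ and $\psi=m-d$ lie in $(-\frac{\pi}{2},\frac{\pi}{2})$. It follows that $\cos\phi=\cos(m+d)\le\cos(|m|-|d|)$. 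The aim is then to prove $\cos^2(m+d)\le 4\cos^2 m$ whenever $|m|+|d|<\frac{\pi}{2}$.

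Setting $x=\frac{\pi}{2}-|m|>|d|\ge0$, the sharpest case is $\cos(m+d)=\sin(x+|d|)\le\sin(2x)=2\sin x\cos x\le 2\sin x=2\cos m$. This gives $\cos\phi\le 2\cos m$, hence $\partial_\alpha\mathcal{C}_\sigma\le 2$. The same argument applies to $\partial_\beta\mathcal{C}_\sigma$, and both partials are positive. The slightly delicate point is the monotonicity $\sin(x+|d|)\le\sin 2x$ when $x+|d|$ may exceed $\frac{\pi}{2}$. I would instead bound $\sin(x+|d|)\le\sin x+\sin|d|\le 2\sin x$, using $|d|<x\le\frac{\pi}{2}$, which avoids the monotonicity issue entirely. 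This constant $2$ is exactly the one claimed, which confirms the approach.
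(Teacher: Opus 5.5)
Your proof is correct and is essentially the paper's: the same angular formula $\partial_\alpha\mathcal{C}_\sigma=\cos^2\phi/(2\cos^2 m)$, the bound $\cos\phi\le 2\cos m$, and the mean value theorem. The one difference is that the paper gets the key inequality in one line from $\cos\phi\le\cos\phi+\cos\psi=2\cos m\cos d\le 2\cos m$, rather than through $\sin(x+|d|)\le\sin x+\sin|d|$. Also delete the aside that the ratio is ``unbounded in general'' and the abandoned $\sin(x+|d|)\le\sin 2x$ step, since the ratio is bounded by $2$, as you yourself go on to prove.
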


\begin{proof}
    From \eqref{lem:Taylor_C_sigma-a} and \eqref{lem:Taylor_C_sigma-b}, we have 
    \begin{displaymath}
        \partial_{\alpha}\mathcal{C}_{\sigma}(\alpha,\beta)
        =
        \frac{\cos^2\theta}{2\cos^2m} 
        \qquad\text{and}\qquad
        \partial_{\beta}\mathcal{C}_{\sigma}(\alpha,\beta)
        =
        \frac{\cos^2\phi}{2\cos^2m},
    \end{displaymath}
    where $\theta:=\arctan\frac{\alpha}{\sigma}$, $\phi:=\arctan\frac{\beta}{\sigma}$, and $m:=\frac{\theta+\phi}{2}$.
    Since $\theta,\phi\in(-\frac{\pi}{2},\frac{\pi}{2})$, all three cosines are positive.
    Moreover,
    \begin{displaymath}
        \cos\theta
        \leq
        \cos\theta+\cos\phi
        =
        2\cos m\cos\frac{\theta-\phi}{2}
        \leq
        2\cos m,
    \end{displaymath}
    and the analogous estimate holds for $\cos\phi$.
    Hence $0 < \partial_{\alpha}\mathcal{C}_{\sigma} \leq 2$ and $0 < \partial_{\beta}\mathcal{C}_{\sigma} \leq 2$.
    Applying the Mean Value Theorem to each term and using the preceding derivative bounds, we obtain
    \begin{align*}
        \left| \mathcal{C}_{\sigma}(\alpha_1,\beta_1) - \mathcal{C}_{\sigma}(\alpha_2,\beta_2) \right| 
        &\leq
        \left| \mathcal{C}_{\sigma}(\alpha_1,\beta_1) - \mathcal{C}_{\sigma}(\alpha_2,\beta_1) \right| + \left| \mathcal{C}_{\sigma}(\alpha_2,\beta_1) - \mathcal{C}_{\sigma}(\alpha_2,\beta_2) \right|\\
        &\leq 
        \left| \partial_{\alpha}\mathcal{C}_{\sigma} (c_1, \beta_1)\right| |\alpha_1 - \alpha_2| + \left| \partial_{\beta}\mathcal{C}_{\sigma} (\alpha_2, c_2) \right| |\beta_1 - \beta_2| \\
        &\leq 2|\alpha_1 - \alpha_2| + 2|\beta_1 - \beta_2|,
    \end{align*}
    where $c_1\in \left[\min\{\alpha_1,\alpha_2\},\max\{\alpha_1,\alpha_2\}\right]$ and $c_2\in \left[\min\{\beta_1,\beta_2\},\max\{\beta_1,\beta_2\}\right]$.
\end{proof}

\begin{lemma}
    \label{lem:scale_sensitivity_C_sigma}
    Fix $\sigma>0$.
    Then, for every $\mu, \nu \geq \sigma$ and every $\alpha, \beta \in \mathbb{R}$,
    \begin{equation}
        \label{ineq:scale_sensitivity_C_sigma}
        \left| \mathcal{C}_{\mu}(\alpha, \beta) - \mathcal{C}_{\nu}(\alpha, \beta) \right|
        \leq
        \frac{|\alpha + \beta|\,|\alpha - \beta|^2}{4 \sigma^3}|\mu - \nu|.
    \end{equation}
\end{lemma}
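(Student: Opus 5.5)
The plan is to write $\mathcal{C}_s(\alpha,\beta)$, for fixed $\alpha,\beta$, as an explicit smooth function of the scale $s>0$, bound its derivative in $s$ uniformly for $s\geq\sigma$, and then apply the Mean Value Theorem on the segment between $\mu$ and $\nu$.

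\textbf{Step 1: a closed form in $s$.} Set $A:=\sqrt{s^2+\alpha^2}$ and $B:=\sqrt{s^2+\beta^2}$, and start from the algebraic representation \eqref{eq:algebraic_C_sigma}. Subtracting the arithmetic mean gives
\begin{displaymath}
    \mathcal{C}_s(\alpha,\beta)-\frac{\alpha+\beta}{2}
    =
    \frac{(\alpha-\beta)(B-A)}{2(A+B)}.
\end{displaymath}
Next use $B-A=(\beta^2-\alpha^2)/(A+B)$. This yields the identity
\begin{displaymath}
    \mathcal{C}_s(\alpha,\beta)
    =
    \frac{\alpha+\beta}{2}
    -
    \frac{(\alpha+\beta)(\alpha-\beta)^2}{2\bigl(A+B\bigr)^2}.
\end{displaymath}
It is valid for every $s>0$ and every $(\alpha,\beta)$, and it is consistent with \eqref{eq:C_infty}. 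It isolates exactly the factor $(\alpha+\beta)(\alpha-\beta)^2$ appearing in \eqref{ineq:scale_sensitivity_C_sigma}.

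\textbf{Step 2: derivative bound.} Differentiating the identity in $s$, with $\partial_s A=s/A$ and $\partial_s B=s/B$, gives
\begin{displaymath}
    \partial_s\mathcal{C}_s(\alpha,\beta)
    =
    \frac{(\alpha+\beta)(\alpha-\beta)^2}{(A+B)^3}\left(\frac{s}{A}+\frac{s}{B}\right).
\end{displaymath}
Since $0<s/A\leq1$, $0<s/B\leq1$, and $A+B\geq 2s$, we obtain
\begin{displaymath}
    \bigl|\partial_s\mathcal{C}_s(\alpha,\beta)\bigr|
    \leq
    \frac{2\,|\alpha+\beta|\,|\alpha-\beta|^2}{8s^3}
    =
    \frac{|\alpha+\beta|\,|\alpha-\beta|^2}{4s^3}.
\end{displaymath}
For $s\geq\sigma$ this is at most $|\alpha+\beta|\,|\alpha-\beta|^2/(4\sigma^3)$.

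\textbf{Step 3: Mean Value Theorem.} The map $s\mapsto\mathcal{C}_s(\alpha,\beta)$ is $C^1$ on $(0,\infty)$. The segment between $\mu$ and $\nu$ lies in $[\sigma,\infty)$. Applying the Mean Value Theorem there together with the bound from Step 2 gives \eqref{ineq:scale_sensitivity_C_sigma}; the case $\mu=\nu$ is trivial.

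The only step requiring any care is Step 1, namely finding the rewriting that exposes the factor $(\alpha+\beta)(\alpha-\beta)^2$. Once that identity is in hand, the constant $1/4$ comes out directly from the crude bounds $s/A,\,s/B\leq1$ and $A+B\geq2s$.
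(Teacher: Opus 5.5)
Your proof is correct and follows essentially the same strategy as the paper: a Mean Value Theorem argument in the scale variable, driven by a derivative bound that exposes the factor $(\alpha+\beta)(\alpha-\beta)^2$. The only difference is the parametrization. The paper works with $\varphi(\lambda)=\lambda^{-1}\mathcal{C}_\sigma(\lambda\alpha,\lambda\beta)$ in the reciprocal variable $\lambda=\sigma/s\in(0,1]$, so it needs $\mu\nu\geq\sigma^2$ and $c<1$ at the end. Your closed-form identity instead lets you differentiate directly in $s$ and read off the bound $|\alpha+\beta|\,|\alpha-\beta|^2/(4s^3)$ immediately.
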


\begin{proof}
    If $\mu=\nu$, then \eqref{ineq:scale_sensitivity_C_sigma} is immediate.
    Without loss of generality, assume that $\nu > \mu$.
    Define the function $\varphi : (0, 1] \to \mathbb{R}$ by 
    \begin{displaymath}
        \varphi(\lambda) := \frac{1}{\lambda} \mathcal{C}_{\sigma}(\lambda \alpha, \lambda \beta).
    \end{displaymath}
    Using \eqref{eq:algebraic_C_sigma}, \eqref{lem:Taylor_C_sigma-a}, and \eqref{lem:Taylor_C_sigma-b}, we obtain
    \begin{displaymath}
        \varphi'(\lambda)
        = 
        -\frac{\sigma^2\lambda(\alpha+\beta)(\alpha-\beta)^2}{\sqrt{\sigma^2+\lambda^2\alpha^2} \sqrt{\sigma^2+\lambda^2\beta^2}\bigl(\sqrt{\sigma^2+\lambda^2\alpha^2} + \sqrt{\sigma^2+\lambda^2\beta^2} \bigr)^2}
    \end{displaymath}
    for all $\lambda \in (0, 1)$.
    Since both square roots in the denominator are bounded below by $\sigma$, we have
    \begin{equation}   \label{ineq:scale_sensitivity_C_sigma-1}
        \left\vert \varphi'(\lambda) \right\vert
        \leq
        \frac{\lambda |\alpha+\beta| \, |\alpha-\beta|^2}{4 \sigma^2}.
    \end{equation}
    Applying the Mean Value Theorem to $\varphi$, there exists $c \in \bigl(\frac{\sigma}{\nu}, \frac{\sigma}{\mu} \bigr)$ such that
    \begin{equation}   \label{eq:scale_sensitivity_C_sigma-2}
        \varphi\left( \frac{\sigma}{\mu} \right) - \varphi\left( \frac{\sigma}{\nu} \right)
        =
        \sigma \, \varphi'(c) \left( \frac{1}{\mu} - \frac{1}{\nu} \right).
    \end{equation}
    Combining \eqref{eq:scale_sensitivity_C_sigma-2} and \eqref{ineq:scale_sensitivity_C_sigma-1} yields 
    \begin{displaymath}
        \left\vert \mathcal{C}_{\mu}(\alpha, \beta) - \mathcal{C}_{\nu}(\alpha, \beta) \right\vert 
        = \sigma \, | \varphi'(c) | \left\vert \frac{1}{\mu} - \frac{1}{\nu} \right\vert
        \leq \frac{c \, |\alpha+\beta| \, |\alpha-\beta|^2}{4 \sigma} \cdot \frac{|\mu - \nu|}{\sigma^2}.
    \end{displaymath}
    This implies \eqref{ineq:scale_sensitivity_C_sigma} since $c \in (0, 1)$.
\end{proof}

\section{Auxiliary results for the convergence analysis}
\label{apx:auxiliary_results}

The following lemma converts an error-dependent local residual bound into a global convergence estimate for SE.

\begin{lemma}
    \label{lem:lte_to_global_error_SE}
    Suppose that hypothesis~\ref{H1} holds.
    Let $p\geq2$, $P,\varepsilon>0$, and $Q,\Lambda\geq0$ be fixed.
    Define
    \begin{equation}
    \label{eq:W_rho_epsilon}
    W_{\rho,\varepsilon}(u)
    :=
    \left\{
        \bigl((t,x),(s,y)\bigr)\in U_\rho(u)^2
        \,\middle|\,
        0\leq s-t\leq\varepsilon
    \right\}.
    \end{equation}
    For each fixed $h\in(0,\varepsilon]$ and every $n=0,1,\ldots,N_h-1$, let $\Sigma_n$ be a fixed function from $W_{\rho,\varepsilon}(u)$ to $(0,\infty)$, possibly depending on $h$, and define the real-valued functions $\mathcal{N}_n$ and $\mathcal{E}_n$ on this set by
    \begin{align*}
        \mathcal{N}_n(t, x; s, y)
        &:=
        \mathcal{C}_{\Sigma_n(t, x; s, y)} \left(F(s, y), F(t,x)\right),    \\
        \mathcal{E}_n(t, x; s, y)
        &:=
        \mathcal{C}_{\Sigma_n(t, x; s, y)} \left(u'(s), u'(t)\right).
    \end{align*}
    Suppose that whenever $0<h\leq \varepsilon$, $0\leq n<N_h$, and $\{(t_j,u_j)\}_{j=0}^{n}\subseteq U_\rho(u)$, the following inequalities hold:
    \begin{align}
        &\left| \, \mathcal{N}_n(t_n, u_n; t_{n+1}, v) - \mathcal{N}_n(t_n, u_n; t_{n+1}, w) \right|
        \leq
        \Lambda|v-w|
        \label{ineq:lte_to_global_error_SE-0}
    \end{align}
    for every $(t_{n+1},v),(t_{n+1},w)\in U_\rho(u)$ and
    \begin{equation}    \label{ineq:lte_to_global_error_SE-1}
        \left| \frac{u(t_{n+1}) - u(t_n)}{h} - \mathcal{E}_n(t_n, u_n; t_{n+1}, v) \right|
        \leq
        P h^p + Q h^2 \left( |e_n| + |v-u(t_{n+1})| \right)
    \end{equation}
    for every $(t_{n+1},v)\in U_\rho(u)$.
    Then there exist constants
    \begin{align*}
        H
        &\equiv H(p,P,Q,\Lambda,\varepsilon,\rho,L,t_0,T) > 0,   \\
        C_g
        &\equiv C_g(P,Q,L,t_0,T) > 0
    \end{align*}
    such that, for every $0<h\leq H$, there exists a unique sequence $\{u_n\}_{n=0}^{N_h}$ with $u_0=u(t_0)$ satisfying
    \begin{equation} \label{eq:lte_to_global_error_SE-5}
        u_{n+1}
        =
        u_n + h \, \mathcal{N}_n(t_n, u_n; t_{n+1}, u_{n+1}),
        \qquad
        n=0,1,\ldots,N_h-1,
    \end{equation}
    with $\{(t_n,u_n)\}_{n=0}^{N_h}\subseteq U_\rho(u)$ and
    \begin{displaymath}
        \max_{0\leq n\leq N_h}|e_n|
        \leq
        C_g h^p.
    \end{displaymath}
\end{lemma}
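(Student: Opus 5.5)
We argue by induction on $n$, proving existence, uniqueness and the error bound simultaneously. The induction hypothesis is that $u_0,\dots,u_n$ have been constructed uniquely in $U_\rho(u)$ with $|e_j|\le E_j$, where $E_j$ is a Gronwall-type majorant that stays below $C_gh^p\le\rho/2$. We first fix $H$ small enough to secure three smallness conditions:
\begin{displaymath}
    h\Lambda\le \tfrac12,
    \qquad
    hQh^2\le \tfrac14,
    \qquad
    C_gH^p\le \tfrac{\rho}{2}.
\end{displaymath}
The last condition keeps all iterates strictly inside the tube. We also take $H\le\varepsilon$.

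\textbf{Existence and uniqueness at step $n+1$.} Consider the map
\begin{displaymath}
    \Phi(v):=u_n+h\,\mathcal{N}_n(t_n,u_n;t_{n+1},v)
\end{displaymath}
on the closed interval $I:=\{v:|v-u(t_{n+1})|\le\rho\}$. By \eqref{ineq:lte_to_global_error_SE-0}, $\Phi$ is a contraction on $I$ with constant $h\Lambda\le\frac12$. It remains to check that $\Phi$ maps $I$ into itself. For $v\in I$ we write
\begin{displaymath}
    \Phi(v)-u(t_{n+1})
    =
    -e_n
    + h\bigl[\mathcal{N}_n(t_n,u_n;t_{n+1},v)-\mathcal{E}_n(t_n,u_n;t_{n+1},v)\bigr]
    - h\Bigl[\frac{u(t_{n+1})-u(t_n)}{h}-\mathcal{E}_n(t_n,u_n;t_{n+1},v)\Bigr].
\end{displaymath}
We bound the three pieces separately:
\begin{itemize}
    \item The middle bracket involves the same scale $\Sigma_n(t_n,u_n;t_{n+1},v)$ in both terms. \Cref{lem:Lipschitz_C_sigma} together with \ref{H1}, using $u'=F(\cdot,u)$, bounds it by $2L(|e_n|+|v-u(t_{n+1})|)$.
    \item The last bracket is controlled by \eqref{ineq:lte_to_global_error_SE-1}.
\end{itemize}
Combining these gives
\begin{displaymath}
    |\Phi(v)-u(t_{n+1})|
    \le
    (1+2Lh+Qh^3)|e_n|+(2Lh+Qh^3)\rho+Ph^{p+1}.
\end{displaymath}
\textbf{The main obstacle is that this crude bound need not be $\le\rho$.} The term $2Lh\rho$ is not automatically absorbed. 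The fix is to run the contraction on the smaller interval $I':=\{|v-u(t_{n+1})|\le r_{n+1}\}$, where $r_{n+1}:=2\bigl[(1+2Lh+Qh^3)|e_n|+Ph^{p+1}\bigr]/(1-2Lh-Qh^3)$. The same estimate with $\rho$ replaced by $r_{n+1}$ shows that $\Phi(I')\subseteq I'$ once $2Lh+Qh^3\le\frac12$. We also need $r_{n+1}\le\rho$, which follows from the inductive error bound and the choice of $H$. Banach's theorem then gives a fixed point in $I'$, and this fixed point is unique in all of $I$, since $\Phi$ is a contraction on $I$. This yields $u_{n+1}$ with $(t_{n+1},u_{n+1})\in U_\rho(u)$, unique among tube-admissible solutions.

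\textbf{Error recursion.} At the fixed point $v=u_{n+1}$, the same decomposition gives
\begin{displaymath}
    |e_{n+1}|
    \le
    |e_n|+2Lh\bigl(|e_n|+|e_{n+1}|\bigr)+Ph^{p+1}+Qh^3\bigl(|e_n|+|e_{n+1}|\bigr).
\end{displaymath}
We move the $|e_{n+1}|$ terms to the left, which is possible because $1-2Lh-Qh^3\ge\frac12$. With $h\le H$ small (in particular $Qh^2\le L$), this gives
\begin{displaymath}
    |e_{n+1}|\le (1+8Lh)|e_n|+2Ph^{p+1}.
\end{displaymath}
Iterating from $e_0=0$ gives
\begin{displaymath}
    |e_n|\le 2P(T-t_0)e^{8L(T-t_0)}h^p=:C_gh^p.
\end{displaymath}
This constant depends only on $P,L,t_0,T$; the dependence on $Q$ enters only through the smallness of $H$. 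Finally, we choose $H$ so that $C_gH^p\le\rho/2$. This closes the induction: it guarantees that $\{(t_j,u_j)\}_{j\le n+1}\subseteq U_\rho(u)$, so the hypotheses \eqref{ineq:lte_to_global_error_SE-0}--\eqref{ineq:lte_to_global_error_SE-1} may be invoked at the next step.
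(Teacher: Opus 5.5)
Your proposal follows the paper's proof: induction on $n$, a Banach contraction on $|v-u(t_{n+1})|\le\rho$ with constant $\Lambda h\le\frac12$, the same three-term splitting giving $(1-2Lh-Qh^3)|e_{n+1}|\le(1+2Lh+Qh^3)|e_n|+Ph^{p+1}$, and a discrete Gronwall sum from $e_0=0$.

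The detour through $I'$ is unnecessary, though valid. Since $|e_n|\le C_gh^p\le\rho/2$, the paper simply adds the condition $3\rho LH+\frac{3Q\rho}{2}H^3+PH^{p+1}\le\frac{\rho}{2}$, so the crude bound is already $\le\rho$. Your $I'$ route only works after shrinking $H$ beyond $C_gH^p\le\rho/2$ so that $r_{n+1}\le\rho$. Your constant bookkeeping also slips: $Qh^2\le L$ is impossible when $L=0<Q$, and $\theta\le\frac12$ gives $(1+\theta)/(1-\theta)\le 1+4\theta$, i.e.\ $1+12Lh$ rather than $1+8Lh$. This is harmless, because $C_g$ may depend on $Q$, as in the paper's $C_g=2P(T-t_0)\exp\bigl((8L+4Q)(T-t_0)\bigr)$.
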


\begin{proof}
    Define
    \begin{equation}    \label{def:C_g}
        C_g
        :=
        2P(T-t_0)
        \exp\bigl((8L+4Q)(T-t_0)\bigr).
    \end{equation}
    Choose $H\in(0,\min\{1,\varepsilon\}]$ such that
    \begin{align}
        &\Lambda H\leq\frac12,
        \qquad
        2LH+QH^3\leq\frac12,
        \qquad
        C_gH^p\leq\frac{\rho}{2},    \label{ineq:lte_to_global_error_SE-2}    \\
        &3\rho LH+\frac{3Q\rho}{2}H^3+PH^{p+1}
        \leq
        \frac{\rho}{2}.     \label{ineq:lte_to_global_error_SE-6} 
    \end{align}

    Fix $h\in(0,H]$.
    We prove inductively that, for every $n=0,1,\ldots,N_h$, the value $u_n$ is uniquely determined and satisfies
    \begin{equation} \label{ineq:lte_to_global_error_SE-3}
        (t_n,u_n)\in U_\rho(u)
        \qquad\text{and}\qquad
        |e_n|\leq C_gh^p.
    \end{equation}
    Since $u_0 = u(t_0)$, the assertion holds for $n=0$.

    Suppose that the assertion holds through an index $n<N_h$.
    Define the closed interval
    \begin{displaymath}
        V_{n+1}
        :=
        \left\{
            v\in\mathbb{R}
            \,\middle|\,
            |v-u(t_{n+1})|\leq\rho
        \right\}
    \end{displaymath}
    and the function $\Phi_n:V_{n+1}\to\mathbb{R}$ by
    \begin{displaymath}
        \Phi_n(v)
        :=
        u_n+h \, \mathcal{N}_n(t_n,u_n;t_{n+1},v).
    \end{displaymath}
    For every $v,w\in V_{n+1}$, the definition of $V_{n+1}$, the induction hypothesis, and $h\leq H\leq \varepsilon$ imply that
    \begin{displaymath}
        \bigl((t_n,u_n),(t_{n+1},v)\bigr),
        \quad
        \bigl((t_n,u_n),(t_{n+1},w)\bigr)
        \in
        W_{\rho,\varepsilon}(u).
    \end{displaymath}
    By \eqref{ineq:lte_to_global_error_SE-0} and \eqref{ineq:lte_to_global_error_SE-2},
    \begin{align*}
        |\Phi_n(v) -\Phi_n(w)|
        &\leq
        \Lambda h|v-w|
        \leq
        \frac{1}{2}|v-w|.
    \end{align*}
    Thus, $\Phi_n$ is a contraction.
    Furthermore, for every $v\in V_{n+1}$, \cref{lem:Lipschitz_C_sigma}, hypothesis~\ref{H1}, \eqref{ineq:lte_to_global_error_SE-1}, \eqref{ineq:lte_to_global_error_SE-2}, \eqref{ineq:lte_to_global_error_SE-3}, and \eqref{ineq:lte_to_global_error_SE-6} give
    \begin{align*}
        |\Phi_n(v) - u(t_{n+1})|
        &\leq
        (1 + 2Lh+Qh^3)|e_n| + (2Lh+Qh^3)|v-u(t_{n+1})| + Ph^{p+1}\\
        &\leq
        \frac{\rho}{2} + 3L\rho h + \frac{3Q\rho}{2}h^3 + Ph^{p+1}\\
        &\leq
        \rho.
    \end{align*}
    Hence, $\Phi_n(V_{n+1})\subseteq V_{n+1}$.
    The Banach fixed-point theorem gives a unique $u_{n+1}\in V_{n+1}$ satisfying \eqref{eq:lte_to_global_error_SE-5}. 

    Since $\Phi_n(u_{n+1})=u_{n+1}$, taking $v=u_{n+1}$ in the first inequality of the preceding estimate gives
    \begin{equation}
        \label{ineq:lte_to_global_error_SE-4}
        (1-2Lh-Qh^3)|e_{n+1}|
        \leq
        (1+2Lh+Qh^3)|e_n|
        +
        Ph^{p+1}.
    \end{equation}
    We apply \cref{lem:discrete_error_accumulation} with $N=n+1$, $\{E_j\}_{j=0}^{n+1}=\{|e_j|\}_{j=0}^{n+1}$, $C_\ell=P$, $\alpha_h=\beta_h=2Lh+Qh^3$, and $K=4L+2Q$.
    Indeed, $E_0=0$, and the second inequality in \eqref{ineq:lte_to_global_error_SE-2} gives $\alpha_h\leq\frac{1}{2}$.
    Moreover, since $h\leq 1$,
    \begin{displaymath}
        \alpha_h+\beta_h
        =
        4Lh+2Qh^3
        \leq
        (4L+2Q)h
        =
        Kh.
    \end{displaymath}
    Since $n < N_h$, we also have
    \begin{displaymath}
        N h
        =
        (n+1)h
        \leq
        N_h h
        \leq
        T-t_0.
    \end{displaymath}
    Hence,
    \begin{displaymath}
        \max_{0\leq j\leq n+1}|e_j|
        \leq
        2P(T-t_0)
        \exp\bigl((8L+4Q)(T-t_0)\bigr)h^p
        =
        C_gh^p.
    \end{displaymath}
    Together with $u_{n+1}\in V_{n+1}$, this establishes the induction assertion for the index $n+1$ and closes the induction.
\end{proof}

The following lemma provides the scalar error accumulation estimate used in the convergence proofs.

\begin{lemma}[Discrete error accumulation]
    \label{lem:discrete_error_accumulation}
    Let $T>t_0$, $p>0$, $C_{\ell}>0$, and $K\geq0$.
    Let $h>0$ and $N\in\mathbb{N}$ satisfy $N h \leq T-t_0$.
    Suppose that $\{E_n\}_{n=0}^{N}$ is a nonnegative sequence with $E_0 = 0$ and that $\alpha_h,\beta_h\geq0$ satisfy $\alpha_h \leq \frac{1}{2}$ and $\alpha_h+\beta_h \leq Kh$.
    If
    \begin{equation}    \label{ineq:discrete_error_accumulation}
        (1-\alpha_h)E_{n+1}
        \leq
        (1+\beta_h)E_n
        +
        C_{\ell}h^{p+1}
    \end{equation}
    for every $n=0,1,\ldots,N-1$, then
    \begin{displaymath}
        \max_{0\leq n\leq N}E_n
        \leq
        2C_{\ell}(T-t_0)
        \exp\left(
            2K(T-t_0)
        \right)
        h^p.
    \end{displaymath}
\end{lemma}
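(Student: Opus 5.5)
The plan is to unroll the recursion \eqref{ineq:discrete_error_accumulation} into a discrete Gronwall bound. First, divide by $1-\alpha_h$. Since $\alpha_h\le\frac12$, we have $(1-\alpha_h)^{-1}\le 2$. We also have $(1-\alpha_h)^{-1}\le 1+2\alpha_h$, because $1\le(1+2\alpha_h)(1-\alpha_h)=1+\alpha_h-2\alpha_h^2$ holds exactly when $\alpha_h(1-2\alpha_h)\ge0$, which is true for $\alpha_h\in[0,\frac12]$. Combining these gives
\begin{displaymath}
    E_{n+1}\le (1+2\alpha_h)(1+\beta_h)E_n + 2C_\ell h^{p+1}.
\end{displaymath}

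Next, bound the amplification factor. Using $1+x\le e^x$, we get $(1+2\alpha_h)(1+\beta_h)\le \exp(2\alpha_h+\beta_h)\le \exp(2(\alpha_h+\beta_h))\le e^{2Kh}$. Write $\gamma:=e^{2Kh}\ge1$. The recursion then reads $E_{n+1}\le \gamma E_n + 2C_\ell h^{p+1}$.

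Finally, iterate from $E_0=0$. By induction on $n$,
\begin{displaymath}
    E_n\le 2C_\ell h^{p+1}\sum_{j=0}^{n-1}\gamma^j\le 2C_\ell h^{p+1}\, n\,\gamma^{n}.
\end{displaymath}
Since $n\le N$ and $Nh\le T-t_0$, we have $nh^{p+1}\le (T-t_0)h^p$ and $\gamma^n=e^{2Knh}\le e^{2K(T-t_0)}$. Substituting these yields the claimed bound for every $0\le n\le N$, and hence for the maximum.

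I do not expect any real obstacle here. The one point needing care is the elementary inequality $(1-\alpha_h)^{-1}\le 1+2\alpha_h$, which is what allows the factor $2$ in front of $C_\ell$ and the factor $2K$ in the exponent to match the stated constants. If $K=0$, then $\gamma=1$ and the same bound holds trivially.
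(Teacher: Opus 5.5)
Your proof is correct and follows the paper's argument essentially step for step: divide by $1-\alpha_h$, bound the amplification factor by $\exp(2Kh)$ and the forcing term by $2C_\ell h^{p+1}$, then iterate from $E_0=0$ and use $nh\le T-t_0$. The only difference is cosmetic: the paper bounds the factor directly as $\frac{1+\beta_h}{1-\alpha_h}=1+\frac{\alpha_h+\beta_h}{1-\alpha_h}\le 1+2(\alpha_h+\beta_h)$, whereas you go through $(1-\alpha_h)^{-1}\le 1+2\alpha_h$; also, replacing the factor by an upper bound in front of $E_n$ tacitly uses the given $E_n\ge 0$.
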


\begin{proof}
    We have
    \begin{displaymath}
        \frac{1+\beta_h}{1-\alpha_h}
        =
        1 + \frac{\alpha_h+\beta_h}{1-\alpha_h}
        \leq 
        1 + 2 (\alpha_h + \beta_h)
        \leq
        \exp(2Kh).
    \end{displaymath}
    Thus, \eqref{ineq:discrete_error_accumulation} implies
    \begin{displaymath}
        E_{n+1}
        \leq
        \exp(2Kh) E_n + 2 C_{\ell} h^{p+1}.
    \end{displaymath}    
    Iterating this inequality from $E_0=0$ gives, for every $n=1,2,\ldots,N$,
    \begin{align*}
        E_n
        &\leq
        2C_{\ell}h^{p+1} \sum_{j=0}^{n-1} \exp(2Kjh) 
        \leq
        2C_{\ell}nh^{p+1} \exp(2Knh)\\
        &\leq
        2C_{\ell}(T-t_0) \exp\left( 2K(T-t_0) \right) h^p.
    \end{align*}
    Taking the maximum over $n = 0, 1, \ldots, N$ completes the proof.
\end{proof}

The following lemma gives a scale-sensitivity estimate for $\mathcal{C}_\sigma$ evaluated at $u'(t)$ and $u'(s)$.

\begin{lemma}
    \label{lem:scale_sensitivity_along_solution}
    Let $u\in C^3([t_0,T])$, and fix $\sigma>0$.
    Then, for every $t, s \in[t_0, T]$ and every $\mu,\nu\geq\sigma$,
    \begin{displaymath}
        \left| \mathcal{C}_\mu\bigl(u'(t),u'(s)\bigr) - \mathcal{C}_\nu\bigl(u'(t),u'(s)\bigr) \right|
        \leq
        \frac{M_1M_2^2}{2\sigma^3} |t-s|^2|\mu-\nu|.
    \end{displaymath}
\end{lemma}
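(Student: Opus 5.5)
This is a direct application of \cref{lem:scale_sensitivity_C_sigma}, with $\alpha=u'(t)$ and $\beta=u'(s)$. The hypotheses $\mu,\nu\geq\sigma$ are exactly those of that lemma, so it gives
\begin{displaymath}
    \left| \mathcal{C}_\mu\bigl(u'(t),u'(s)\bigr) - \mathcal{C}_\nu\bigl(u'(t),u'(s)\bigr) \right|
    \leq
    \frac{|u'(t)+u'(s)|\,|u'(t)-u'(s)|^2}{4\sigma^3}\,|\mu-\nu|.
\end{displaymath}
It then remains to bound the two solution-dependent factors in terms of $M_1$, $M_2$ and $|t-s|$.

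For the sum, the triangle inequality and the definition of $M_1$ give $|u'(t)+u'(s)|\leq 2M_1$. For the difference, $u\in C^3([t_0,T])\subset C^2([t_0,T])$, so the Mean Value Theorem (or the fundamental theorem of calculus applied to $u''$) yields
\begin{displaymath}
    |u'(t)-u'(s)|\leq M_2|t-s|.
\end{displaymath}
This gives $|u'(t)-u'(s)|^2\leq M_2^2|t-s|^2$.

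Substituting both bounds produces the constant
\begin{displaymath}
    \frac{2M_1\,M_2^2|t-s|^2}{4\sigma^3}=\frac{M_1M_2^2}{2\sigma^3}|t-s|^2,
\end{displaymath}
which is exactly the claimed estimate. The case $t=s$ is trivial, since both sides vanish. No step is a genuine obstacle; the only point to check is that $M_1$ and $M_2$ are finite, which holds by the assumed regularity on the compact interval $[t_0,T]$.
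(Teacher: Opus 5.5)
Your proposal is correct and follows the paper's proof exactly: you apply \cref{lem:scale_sensitivity_C_sigma} with $\alpha=u'(t)$, $\beta=u'(s)$, then use $|u'(t)+u'(s)|\leq 2M_1$ and $|u'(t)-u'(s)|\leq M_2|t-s|$. These bounds give the constant $\frac{M_1M_2^2}{2\sigma^3}$.
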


\begin{proof}
    We have $|u'(t)+u'(s)| \leq 2 M_1$ and $|u'(t)-u'(s)|\leq M_2|t-s|$.
    Hence, \cref{lem:scale_sensitivity_C_sigma} gives the desired inequality.
\end{proof}

\bibliographystyle{siamplain}
\bibliography{reference}{}

@Book{2016_Butcher_BOOK,
  author    = {Butcher, J. C.},
  publisher = {John Wiley \& Sons, Ltd., Chichester},
  title     = {Numerical methods for ordinary differential equations},
  year      = {2016},
  edition   = {3rd},
  isbn      = {978-1-119-12150-3},
  doi       = {10.1002/9781119121534},
  mrclass   = {65-02 (65Lxx)},
  mrnumber  = {3559553},
  pages     = {xxiii+513},
}

@Book{2026a_Jung,
  author    = {Jung, Kiyuob},
  publisher = {preprint, arXiv:2601.09900 [math.CA]},
  title     = {Specular differentiation in one dimension: a quasi-mean value theorem, regularity, and discontinuities},
  year      = {2026},
  url       = {https://arxiv.org/abs/2601.09900},
}

@Article{1987_Evans,
  author    = {Evans, David John and Sanugi, Bahrom},
  journal   = {Int. J. Comput. Math.},
  title     = {A comparison of numerical ODE solvers based on arithmetic and geometric means},
  year      = {1987},
  number    = {1},
  pages     = {37--62},
  volume    = {23},
  doi       = {10.1080/00207168708803607},
  fjournal  = {International Journal of Computer Mathematics},
  publisher = {Taylor \& Francis},
}

@Article{1991_Evans,
  author    = {Evans, David John and Sanugi, Bahrom},
  journal   = {Int. J. Comput. Math.},
  title     = {A comparison of nonlinear trapezoidal formulae for solving initial value problems},
  year      = {1991},
  number    = {1-2},
  pages     = {65--79},
  volume    = {41},
  doi       = {10.1080/00207169108804027},
  fjournal  = {International Journal of Computer Mathematics},
  publisher = {Taylor \& Francis},
}

@Article{1996_Sivakumar,
  author   = {Sivakumar, T. R. and Savithri, S.},
  journal  = {J. Comput. Appl. Math.},
  title    = {A new nonlinear integration formula for {ODE}s},
  year     = {1996},
  issn     = {0377-0427,1879-1778},
  number   = {2},
  pages    = {291--299},
  volume   = {67},
  doi      = {10.1016/0377-0427(94)00128-6},
  fjournal = {Journal of Computational and Applied Mathematics},
  mrclass  = {65L20},
  mrnumber = {1390186},
}

@Article{1989a_Evans,
  author   = {Evans, D. J.},
  journal  = {Appl. Math. Lett.},
  title    = {New {R}unge-{K}utta methods for initial value problems},
  year     = {1989},
  issn     = {0893-9659,1873-5452},
  number   = {1},
  pages    = {25--28},
  volume   = {2},
  doi      = {10.1016/0893-9659(89)90109-2},
  fjournal = {Applied Mathematics Letters. An International Journal of Rapid Publication},
  mrclass  = {65L05},
  mrnumber = {989853},
}

@Article{1994_Sanugi,
  author     = {Sanugi, B. B. and Evans, D. J.},
  journal    = {Int. J. Comput. Math.},
  title      = {A new fourth order runge-kutta formula based on the harmonic mean},
  year       = {1994},
  number     = {1-2},
  pages      = {113--118},
  volume     = {50},
  doi        = {10.1080/00207169408804246},
  fjournal   = {International Journal of Computer Mathematics},
  mrclass    = {65L05},
  mrnumber   = {937565},
  mrreviewer = {G.\ K.\ Gupta},
}

@Article{1995_Evans,
  author    = {D. J. Evans and A. R. Yaakub},
  journal   = {Int. J. Comput. Math.},
  title     = {A new fourth order runge-kutta formula based on the contra-harmonic {$C_0M$} mean},
  year      = {1995},
  number    = {3-4},
  pages     = {249--256},
  volume    = {57},
  doi       = {10.1080/00207169508804428},
  fjournal  = {International Journal of Computer Mathematics},
  publisher = {Taylor \& Francis},
}

@Article{2008_Villatoro,
  author     = {Villatoro, Francisco R.},
  journal    = {J. Difference Equ. Appl.},
  title      = {Exact finite-difference methods based on nonlinear means},
  year       = {2008},
  issn       = {1023-6198,1563-5120},
  number     = {7},
  pages      = {681--691},
  volume     = {14},
  doi        = {10.1080/10236190701736708},
  fjournal   = {Journal of Difference Equations and Applications},
  mrclass    = {65L12},
  mrnumber   = {2431869},
  mrreviewer = {Antonia\ Vecchio},
}

@Article{2009_Villatoro,
  author     = {Villatoro, Francisco R.},
  journal    = {J. Comput. Appl. Math.},
  title      = {Given a one-step numerical scheme, on which ordinary differential equations is it exact?},
  year       = {2009},
  issn       = {0377-0427,1879-1778},
  number     = {2},
  pages      = {1058--1065},
  volume     = {223},
  doi        = {10.1016/j.cam.2008.03.038},
  fjournal   = {Journal of Computational and Applied Mathematics},
  mrclass    = {65L05 (65L06)},
  mrnumber   = {2478901},
  mrreviewer = {Philip\ W.\ Sharp},
}

@Article{2010b_Villatoro,
  author   = {Villatoro, Francisco R.},
  journal  = {Int. J. Comput. Math.},
  title    = {Local error analysis of {E}vans-{S}anugi, nonlinear one-step methods based on {$\theta$}-means},
  year     = {2010},
  issn     = {0020-7160,1029-0265},
  number   = {5},
  pages    = {1009--1022},
  volume   = {87},
  doi      = {10.1080/00207160802255730},
  fjournal = {International Journal of Computer Mathematics},
  mrclass  = {65L05 (26E60 34L30 65L99)},
  mrnumber = {2665710},
}

@Article{2010a_Villatoro,
  author   = {Villatoro, Francisco R.},
  journal  = {Int. J. Comput. Math.},
  title    = {Stability by order stars for non-linear theta-methods based on means},
  year     = {2010},
  issn     = {0020-7160,1029-0265},
  number   = {1-3},
  pages    = {226--242},
  volume   = {87},
  doi      = {10.1080/00207160802036858},
  fjournal = {International Journal of Computer Mathematics},
  mrclass  = {65L20},
  mrnumber = {2598738},
}

@Article{2010_Kosmas,
  author   = {Kosmas, O. T. and Vlachos, D. S.},
  journal  = {Comput. Phys. Comm.},
  title    = {Phase-fitted discrete {L}agrangian integrators},
  year     = {2010},
  issn     = {0010-4655},
  number   = {3},
  pages    = {562--568},
  volume   = {181},
  doi      = {10.1016/j.cpc.2009.11.005},
  fjournal = {Computer Physics Communications. An International Journal and Program Library for Computational Physics and Physical Chemistry},
  mrclass  = {65P10 (65L05)},
  mrnumber = {2578165},
}

@Article{2019_Kosmas,
  author     = {Kosmas, Odysseas and Leyendecker, Sigrid},
  journal    = {Adv. Comput. Math.},
  title      = {Variational integrators for orbital problems using frequency estimation},
  year       = {2019},
  issn       = {1019-7168,1572-9044},
  number     = {1},
  pages      = {1--21},
  volume     = {45},
  doi        = {10.1007/s10444-018-9603-y},
  fjournal   = {Advances in Computational Mathematics},
  mrclass    = {65L06},
  mrnumber   = {3915000},
  mrreviewer = {Philip\ W.\ Sharp},
}

@Article{2002_Ixaru,
  author     = {Ixaru, L. Gr. and Vanden Berghe, G. and De Meyer, H.},
  journal    = {J. Comput. Appl. Math.},
  title      = {Frequency evaluation in exponential fitting multistep algorithms for {ODE}s},
  year       = {2002},
  issn       = {0377-0427,1879-1778},
  number     = {1-2},
  pages      = {423--434},
  volume     = {140},
  doi        = {10.1016/S0377-0427(01)00474-5},
  fjournal   = {Journal of Computational and Applied Mathematics},
  mrclass    = {65L05},
  mrnumber   = {1934453},
  mrreviewer = {T.\ E.\ Simos},
}

@Article{2012_DAmbrosio,
  author   = {D'Ambrosio, R. and Esposito, E. and Paternoster, B.},
  journal  = {Appl. Math. Comput.},
  title    = {Exponentially fitted two-step {R}unge-{K}utta methods: construction and parameter selection},
  year     = {2012},
  issn     = {0096-3003,1873-5649},
  number   = {14},
  pages    = {7468--7480},
  volume   = {218},
  doi      = {10.1016/j.amc.2012.01.014},
  fjournal = {Applied Mathematics and Computation},
  mrclass  = {65L06},
  mrnumber = {2892715},
}

@Article{2015_Ramos,
  author   = {Ramos, Higinio and Singh, Gurjinder and Kanwar, V. and Bhatia, Saurabh},
  journal  = {Appl. Math. Comput.},
  title    = {Solving first-order initial-value problems by using an explicit non-standard {$A$}-stable one-step method in variable step-size formulation},
  year     = {2015},
  issn     = {0096-3003,1873-5649},
  pages    = {796--805},
  volume   = {268},
  doi      = {10.1016/j.amc.2015.06.119},
  fjournal = {Applied Mathematics and Computation},
  mrclass  = {65L05 (65L06 65L20)},
  mrnumber = {3399464},
}

@Book{2026s_Jung_SIAM,
  author    = {Jung, Kiyuob},
  publisher = {v1.3.0, Zenodo},
  title     = {{specular-differentiation}},
  year      = {2026},
  doi       = {10.5281/zenodo.18246734},
  url       = {https://github.com/kyjung2357/specular-differentiation},
}

\end{document}